\tikzstyle{vertex}=[circle,draw=black,fill=black,inner sep=0,minimum size=3pt,text=white,font=\footnotesize]
\date{}
\title{\vspace{-1.2cm} Ramsey properties of algebraic graphs and hypergraphs}
\author{
Benny Sudakov\thanks{ETH Zurich, \emph{e-mail}: \textbf{\{benjamin.sudakov,istvan.tomon\}@math.ethz.ch}}
\and
Istv\'an Tomon\footnotemark[1]
}
\theoremstyle{plain}
\newtheorem{theorem}{Theorem}[section]
\newtheorem{definition}[theorem]{Definition}
\newtheorem{corollary}[theorem]{Corollary}
\newtheorem{claim}[theorem]{Claim}
\newtheorem{lemma}[theorem]{Lemma}
\Crefname{theorem}{Theorem}{Theorems}
\Crefname{definition}{Definition}{Definitions}
\Crefname{corollary}{Corollary}{Corollaries}
\Crefname{claim}{Claim}{Claims}
\Crefname{lemma}{Lemma}{Lemmas}
\Crefname{conjecture}{Conjecture}{Conjectures}
\Crefname{problem}{Problem}{Problems}
\Crefname{prop}{Proposition}{Propositions}
\theoremstyle{definition}
\DeclareMathOperator{\frank}{frank}
\DeclareMathOperator{\mfrank}{mfrank}
\newcommand{\m}{\mathbf}
\begin{document}

\maketitle
\sloppy

\begin{abstract}
    One of the central questions in Ramsey theory asks 
    how small can be the size of the largest clique and independent set in a graph on $N$ vertices. By the celebrated result of Erd\H{o}s from 1947, the random graph on $N$ vertices
    with edge probability $1/2$, contains no clique or independent set larger than $2\log_2 N$, with high probability. Finding explicit constructions of graphs with similar Ramsey-type properties is a famous open problem. A natural approach is to construct such graphs using algebraic tools.
    
    Say that an $r$-uniform hypergraph $\mathcal{H}$ is \emph{algebraic of complexity $(n,d,m)$} if the vertices of $\mathcal{H}$ are elements of $\mathbb{F}^{n}$ for some field $\mathbb{F}$, and there exist $m$ polynomials $f_1,\dots,f_m:(\mathbb{F}^{n})^{r}\rightarrow \mathbb{F}$ of degree at most $d$ such that the edges of $\mathcal{H}$ are  determined by the zero-patterns of $f_1,\dots,f_m$. The aim of this paper is to show that if an algebraic graph (or hypergraph) of complexity $(n,d,m)$ has good Ramsey properties, then at least one of the parameters $n,d,m$ must be large.
    
    In 2001, R\'onyai, Babai and Ganapathy considered the bipartite variant of the Ramsey problem and proved that if $G$ is an algebraic graph of complexity $(n,d,m)$ on $N$ vertices, then either $G$ or its complement contains a complete balanced bipartite graph of size $\Omega_{n,d,m}(N^{1/(n+1)})$. We extend this result by showing that such $G$ contains either a clique or an independent set of size $N^{\Omega(1/ndm)}$ and prove similar results for algebraic hypergraphs of constant complexity. 
    We also obtain a polynomial regularity lemma for $r$-uniform algebraic hypergraphs that are defined by a single polynomial, that might be of independent interest.
    Our proofs combine algebraic, geometric and combinatorial tools. 
\end{abstract}

\section{Introduction}

The quantitative version of Ramsey's theorem, proved by Erd\H{o}s and Szekeres \cite{ESz35}, tells us that every graph on $N$ vertices contains a clique or an independent set of size at least $\frac{1}{2}\log_2 N$. In 1947, Erd\H{o}s \cite{E47} proved that this bound is best possible up to the constant factor, as the random graph on $N$ vertices with edge probability $1/2$ contains no clique or independent set of size larger than $2\log_2 N$, with high probability. Since then, it became a central  problem in graph theory to find explicit constructions of graphs having only logarithmic sized cliques and independent sets. One natural approach to construct such graphs is to use algebraic tools.

Let $r,n,d,m$ be positive integers. Say that an $r$-uniform hypergraph $\mathcal{H}$ is \emph{algebraic of complexity $(n,d,m)$} if the following holds. The vertex set of $\mathcal{H}$ is a subset of $\mathbb{F}^{n}$, where $\mathbb{F}$ is some field, and there exist $m$ polynomials $f_1,\ldots,f_m:(\mathbb{F}^{n})^{r}\rightarrow \mathbb{F}$ of degree at most $d$ and a Boolean formula $\phi:\{\mbox{false},\mbox{true}\}^{m}\rightarrow \{\mbox{false},\mbox{true}\}$ such that $\{\mathbf{v}_1,\dots,\mathbf{v}_r\}\in V(\mathcal{H})^{(r)}$ is an edge if and only if 
\begin{equation}\label{equ:def}
    \phi([f_{1}(\mathbf{v}_1,\dots,\mathbf{v}_r)=0],\dots,[f_{m}(\mathbf{v}_1,\dots,\mathbf{v}_r)=0])=\mbox{true},
\end{equation} i.e., the edges of $\mathcal{H}$ are determined by zero patterns
of polynomials $f_1,\ldots,f_m$.
We assume that for $\{\mathbf{v}_1,\dots,\mathbf{v}_r\}\in V(\mathcal{H})^{(r)}$, the left hand side of (\ref{equ:def}) is invariant under the permutation of $\{\mathbf{v}_1,\dots,\mathbf{v}_r\}$, so the edges of $\mathcal{H}$ are well defined. Also, say that an $r$-uniform hypergraph $\mathcal{H}$ is \emph{strongly-algebraic of complexity $(n,d)$}, if there exists a single polynomial $f:(\mathbb{F}^{n})^{r}\rightarrow \mathbb{F}$ of degree at most $d$ such that $\{\mathbf{v}_1,\dots,\mathbf{v}_r\}\in V(\mathcal{H})^{(r)}$ is an edge if and only if $f(\mathbf{v}_1,\dots,\mathbf{v}_r)\neq 0$. We assume that the statement $f(\mathbf{v}_1,\dots,\mathbf{v}_r)=0$ is invariant under the permutation of $\mathbf{v}_1,\dots,\mathbf{v}_r$. (We could have also defined edges by $f(\mathbf{v}_1,\dots,\mathbf{v}_r)= 0$, but later it will be more convenient to work with this definition.)

\subsection{Ramsey properties of algebraic graphs}

One of the most well known explicit constructions of graphs having small cliques and independent sets is due to Frankl and Wilson \cite{FW81}. For a prime $p$, they consider the graph $G$, whose vertex set is the $p^2-1$ element subsets of $[n]$, and the sets $A$ and $B$ are joined by an edge if $|A\cap B|\equiv -1 \pmod p$. This graph is strongly-algebraic of complexity $(n,2)$, as we can identify each set with its characteristic vector over  $\mathbb{F}_p^{n}$, and two such vectors $\mathbf{v}$ and $\mathbf{w}$ are joined by an edge if $\langle \mathbf{v},\mathbf{w}\rangle\neq -1$. Note that $G$ is also algebraic of complexity $(p^2-1,1,(p^2-1)^2)$ over $\mathbb{R}$, as we can identify each set of size $p^2-1$ with a vector listing its elements, and whether the vectors $\mathbf{u},\mathbf{v}\in \mathbb{R}^{p^2-1}$ are joined by an edge can be decided by the $(p^2-1)^2$ linear equations $\mathbf{u}(i)=\mathbf{v}(j)$. The number of vertices of $G$ is $N=\binom{n}{p^2-1}$, and the celebrated Frankl-Wilson theorem on restricted intersections implies that $G$ has no clique or independent set of size larger than $\binom{n}{p-1}=O_p(N^{1/(p+1)})$. Choosing $n=p^3$, the largest clique and independent set in $G$ has size $2^{O(\sqrt{\log N\log\log N})}$. Another candidate for a graph with good Ramsey properties is the \emph{Paley-graph.}  If $p=1\,(\hspace{-0.24cm}\mod 4)$ is a prime, the Paley-graph of order $p$ is the graph whose vertex set is $\mathbb{F}_p$, and $x$ and $y$ are joined by an edge if $x+y$ is a quadratic residue, that is, $(x+y)^{(p-1)/2}\neq -1$. Such graphs are strongly-algebraic of complexity $(1,(p-1)/2)$, and it is widely believed that Paley-graphs have only polylogarithmic sized cliques or independent sets. For the best known explicit constructions of Ramsey graphs, see the recent works of Chattopadhyay, Zuckerman~\cite{CZ16} and Cohen~\cite{C19}.

The above two constructions of algebraic graphs have at least one large parameter in their complexity. In this paper we show that this is not a coincidence, and if an algebraic graph of complexity $(n,d,m)$ has good Ramsey properties, then at least one of the parameters $n,d,m$ must be large. In 2001, R\'onyai, Babai and Ganapathy \cite{RBG01} considered a bipartite version of this problem.  A \emph{bi-clique} in a graph $G$ is two disjoint sets $A,B\subset V(G)$ such that $|A|=|B|$ and every vertex in $A$ is joined to every vertex in $B$ by an edge. Note that if $G$ is the  random $N$-vertex graph with edge probability $1/2$, then the size of the largest bi-clique in both $G$ and its complement is $O(\log N)$, with high probability. In contrast,  R\'onyai, Babai and Ganapathy \cite{RBG01} proved that if $G$ is an algebraic graph on $N$ vertices of complexity $(n,d,m)$, then either $G$ or its complement contains a bi-clique of size at least $cN^{1/(n+1)}$, where $c=c(n,d,m)>0$. However, the existence of large bi-cliques or their complements does not imply the existence of large cliques or independent sets. Our first theorem extends the result of R\'onyai, Babai and Ganapathy as follows.

\begin{theorem}\label{thm:main}
There exists a constant $c>0$ such that the following holds. Let $n,d,m,N$ be positive integers. Let $G$ be an algebraic graph of complexity $(n,d,m)$ on $N$ vertices. Then $G$ contains either a clique or an independent set of size at least $c'N^{1/\gamma}$, where $c'=c'(n,d,m)>0$ and $$\gamma=c n m\min\left\{d,\frac{n\log d}{\log n}\right\}.$$
\end{theorem}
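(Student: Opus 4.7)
The plan is to reduce to the strongly-algebraic case and then iterate the R\'onyai--Babai--Ganapathy (RBG) bi-clique theorem. Given the $m$ defining polynomials $f_{1},\ldots,f_{m}$ and the Boolean formula $\phi$ of $G$, for each $i$ let $H_{i}$ be the strongly-algebraic graph of complexity $(n,d)$ on $V(G)$ whose edges are the pairs $\{u,v\}$ with $f_{i}(u,v)\neq 0$. A clique or independent set in $H_{i}[S]$ is a subset of $S$ on which the zero pattern of $f_{i}$ is constant. Starting from $S_{0}=V(G)$ and extracting successively homogeneous subsets $S_{1}\supseteq S_{2}\supseteq\cdots\supseteq S_{m}$ for $H_{1},H_{2},\ldots,H_{m}$, the zero pattern of every $f_{i}$ becomes constant on $S_{m}$, so $\phi$ is constant on $\binom{S_{m}}{2}$ and $G[S_{m}]$ is complete or empty. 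This converts the theorem into the task of producing large homogeneous subsets in strongly-algebraic graphs of complexity $(n,d)$.

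For the strongly-algebraic problem, I would iterate the RBG bi-clique bound. Applied to a current set $V_{i}$, it yields a bi-clique $A_{i},B_{i}\subseteq V_{i}$ of size at least $c|V_{i}|^{1/(n+1)}$ with a sign $t_{i}\in\{+,-\}$ indicating whether all or none of the cross-edges are present. Setting $V_{i+1}:=A_{i}$ and preserving the side set $B_{i}$, after $k$ iterations the collection $B_{0},\ldots,B_{k-1}$ inherits a ``comb'' structure whose cross-edges between $B_{i}$ and $B_{j}$ (for $i<j$) are determined uniformly by $t_{i}$, since $B_{j}\subseteq V_{j}\subseteq A_{i}$. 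A Ramsey argument on the sign sequence $(t_{i})$ selects a monochromatic subsequence, and recursively finding a clique or independent set inside each chosen $B_{i}$ assembles a clique or independent set in $G$. To realize the two regimes in $\min\{d,n\log d/\log n\}$, I would treat $d\leq n$ by tracking the degree $d$ directly through the RBG recursion, and treat $d>n$ by a preliminary change of variables that trades the large degree for an effective degree of order $n$ at the cost of raising the dimension by a factor of $O(\log d/\log n)$, before reducing to the first case.

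The main obstacle is combining the $m$-step reduction of the first paragraph with the strongly-algebraic Ramsey bound without incurring a multiplicative blow-up: a naive sequential application would convert a per-polynomial exponent cost of $\alpha(n,d)\sim nd$ into a total of $\alpha(n,d)^{m}$, whereas Theorem~\ref{thm:main} requires the linear bound $nmd$. I expect the polynomial regularity lemma announced in the abstract to be the right vehicle for processing all $m$ polynomials simultaneously, homogenizing their joint zero pattern on a single cell of size at least $N^{1/(cnmd)}$. Establishing this simultaneous treatment, and controlling how the degree propagates through the RBG recursion so that each recursive step loses only a factor of $O(nd)$ in the exponent (rather than $O((nd)^{2})$), are the principal technical difficulties I anticipate.
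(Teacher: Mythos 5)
Your proposal correctly flags the central difficulty (a naive reduction to $m=1$ loses a power of $m$ in the exponent), but the engine you propose for the strongly-algebraic case does not work, and the paper's actual mechanism is quite different. Iterating the R\'onyai--Babai--Ganapathy bi-clique theorem shrinks the working set too fast: setting $V_{i+1}=A_i$ gives $|V_k|\approx N^{(n+1)^{-k}}$, so after only $O(\log\log N)$ steps the sets are constant-sized and the side sets $B_i$ decay super-polynomially. Moreover, the assembly step has a type-mismatch problem: after Ramseying the sign sequence to, say, all $+$, you need \emph{cliques} inside each chosen $B_{i_j}$, but the recursion only promises ``a clique or an independent set,'' and you have no way to force the type to match the selected sign. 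If the recursive calls return independent sets, you cannot combine them under the $+$ sign, and you are left with a single recursive call, i.e.\ a trivial recursion. This is precisely why the paper emphasizes that large bi-cliques and their complements do \emph{not} imply large cliques or independent sets, and why the analogous iterated-bi-clique argument of Fox--Pach--Suk for bounded VC-dimension only achieves $e^{(\log N)^{1-o(1)}}$, not $N^{c}$.

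The paper's proof avoids both problems. It does not reduce to $m=1$: Theorem~\ref{thm:central} runs a single iterative process on all $m$ polynomials simultaneously, maintaining a ``budget'' $s_{\ell,i}\le \binom{n+d}{d}+1$ per polynomial that certifies the current digraph $G_i[U_\ell]$ contains no member of $\mathcal{M}^{+}(s_{\ell,i})$ (these forbidden patterns come from the rank bound of Lemma~\ref{lemma:tensorrank}, not from bi-cliques). Crucially, the sparsification step (Lemma~\ref{lemma:sparsesubset}, built on Haussler's packing lemma via Lemma~\ref{lemma:sparsesubgraph}) produces a well-directed pair $(A,B)$ with $|A|\gtrsim\alpha|U_\ell|$ and $|B|\gtrsim\alpha^{n+1}|U_\ell|$ for a tunable $\alpha$, so the vertex set shrinks by a multiplicative factor, not by a polynomial power. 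Passing to $A$ (a ``small jump'') reduces the budget by a $(1-\Theta(n^{-1/2}))$ factor at cheap cost, passing to $B$ (a ``big jump'') reduces it by $n^{-1/2}$ at cost $\alpha^{n+1}$, and the number of iterations is bounded by $\log s_0$ with $s_0=(\binom{n+d}{d}+1)^m\le\min\{n^{dm},d^{nm}\}$. This inequality, not a change of variables, is what yields the factor $\min\{d,\,n\log d/\log n\}$. Finally, the regularity lemma (Theorem~\ref{thm:regularity}) is not used for Theorem~\ref{thm:main} at all and applies only to $m=1$, so it would not resolve the blow-up you are worried about in the first paragraph.
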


\noindent
I.e., the growth of exponent $\gamma$ as a function of $n$ is linear, while as a function of $d$, it is at most logarithmic. 

\bigskip

The  more general \emph{multicolor Ramsey problem} considers edge-colorings of the complete graph $K_{N}$ with $t$ colors, and asks how small can be the size of the largest monochromatic clique. A coloring $c:E(K_{N})\rightarrow [t]$ of the complete graph $K_{N}$ with $t$ colors is \emph{algebraic of complexity $(n,d,m)$}, if there exist $m$ polynomials $f_1,\dots,f_{m}:(\mathbb{F}^{n})^{2}\rightarrow \mathbb{F}$ of degree at most $d$ and a function $\phi:\{\mbox{false},\mbox{true}\}^{m}\rightarrow [t]$ such that $c(\mathbf{u},\mathbf{v})=\phi(\{f_{i}(\mathbf{u},\mathbf{v})=0\}_{i\in [m]})$.

In case $t=p+1$, where $p$ is prime, the best known construction (not necessarily explicit) of a coloring of $K_N$ with $t$ colors having small monochromatic cliques is a recent result of Conlon and Ferber \cite{CF21}, which was further improved for $t>3$ by Wigderson \cite{W20}. In \cite{CF21}, it is proved that if $n$ is a positive integer and $N=2^{n/2}p^{3n/8+o(n)}$, then $K_{N}$ has a coloring with $t$ colors containing no monochromatic clique of size $n$. Surprisingly, the coloring they provide is almost algebraic. More precisely, first they consider an algebraic $(t-1)$-coloring of complexity $(n,2,t-1)$, and then they recolor the last color class with two new colors randomly.

We prove the extension of Theorem \ref{thm:main}, which shows that in an algebraic coloring of complexity $(n,d,m)$, at least one of the parameters $n,d,m$ must be large if we want to avoid large monochromatic cliques.

\begin{theorem}\label{thm:multicolor}
There exists a constant $c>0$ such that the following holds. Let $n,d,m,t,N$ be positive integers. Every algebraic coloring of $K_{N}$ of complexity $(n,d,m)$ with $t$ colors contains a monochromatic clique of size at least $c'N^{1/\gamma}$, where $c'=c'(n,d,m)$ and $$\gamma=c n m\min\left\{d,\frac{n\log d}{\log n}\right\}.$$
\end{theorem}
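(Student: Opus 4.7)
The plan is to deduce Theorem \ref{thm:multicolor} by strengthening the conclusion of Theorem \ref{thm:main}. The key observation is that the color of an edge $\{\mathbf{u},\mathbf{v}\}$ is determined entirely by its \emph{zero-pattern}
\[
\pi(\mathbf{u},\mathbf{v}):=([f_1(\mathbf{u},\mathbf{v})=0],\dots,[f_m(\mathbf{u},\mathbf{v})=0])\in\{0,1\}^m,
\]
via $c(\mathbf{u},\mathbf{v})=\phi(\pi(\mathbf{u},\mathbf{v}))$. Therefore any set $V'\subseteq V(K_N)$ on which $\pi$ takes the same value for every pair $\{\mathbf{u},\mathbf{v}\}\in\binom{V'}{2}$ is automatically monochromatic in $c$, independently of $\phi$ and $t$. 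So it suffices to produce such a ``zero-pattern-monochromatic'' set of size at least $c'N^{1/\gamma}$.

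To produce such a set, I would re-examine the proof of Theorem \ref{thm:main} and verify that it already yields this stronger conclusion. The algebraic machinery driving Theorem \ref{thm:main} --- polynomial partitioning / cell decomposition applied to $f_1,\dots,f_m$, together with the combinatorial peeling that is responsible for the $m$-dependence in $\gamma$ --- operates at the level of the individual zero-sets $\{f_i=0\}$ and not at the level of their Boolean combination $\phi$. It therefore finds a large subset of the vertex set on which each of the indicators $[f_i=0]$ is separately constant. In the 2-color case this set is a clique when $\phi$ evaluates to \textsc{true} on the common pattern and an independent set when $\phi$ evaluates to \textsc{false}; in the $t$-color case the same set is a monochromatic clique of color $\phi(\pi)$. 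Applying this $\phi$-oblivious form of Theorem \ref{thm:main} to the polynomials defining the coloring $c$ gives Theorem \ref{thm:multicolor} with the same $\gamma$ and a constant $c'=c'(n,d,m)$.

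The main obstacle is to certify that the proof of Theorem \ref{thm:main} really is $\phi$-oblivious in this sense. A purely black-box reduction is available, but it is far from tight. Iterating Theorem \ref{thm:main} $(t-1)$ times on the color classes --- at each step either finishing with a monochromatic clique of some color, or passing to a subset in which that color does not appear --- yields an exponent of $\gamma^{t-1}$; since one may assume $t\leq 2^m$, this can be as large as $\gamma^{2^m}$. Iterating instead over the $m$ polynomials, via the 2-colorings $[f_i=0]$, gives exponent $\gamma^m$. Both alternatives inflate the stated exponent $cnm\min\{d,n\log d/\log n\}$ (linear in $m$, independent of $t$) to something exponential in $m$. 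Consequently the $m$ polynomials have to be processed simultaneously inside the argument of Theorem \ref{thm:main}, rather than one at a time, and the verification of this simultaneous processing is the only real work in the proof of Theorem \ref{thm:multicolor}.
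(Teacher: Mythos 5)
Your strategy is exactly what the paper does. The paper proves a lemma (Theorem \ref{thm:central}) that is precisely the ``$\phi$-oblivious'' strengthening you describe: it finds, for some $I\subset[m]$, a large set $U$ on which the sign vector $([f_1=0],\dots,[f_m=0])$ is the same for every ordered pair, and then both Theorem \ref{thm:main} and Theorem \ref{thm:multicolor} fall out immediately, since any such $U$ is automatically a monochromatic clique under $c=\phi\circ\pi$ regardless of the target of $\phi$ or the number of colors $t$. You are also right that this is not obtainable from Theorem \ref{thm:main} as a black box, and that the $m$ indicators must be processed simultaneously inside the argument; this is exactly why the paper states and proves Theorem \ref{thm:central} rather than Theorem \ref{thm:main} directly. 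The one substantive inaccuracy is your guess about the machinery: the proof does not use polynomial partitioning or cell decomposition, but instead combines the R\'onyai--Babai--Ganapathy zero-pattern bound (to give weak VC-dimension control, Lemma \ref{lemma:weakvc}), Haussler's packing lemma (to extract a sparse pair $(A,B)$, Lemma \ref{lemma:sparsesubgraph}), and the forbidden-configuration counting via $\mathcal{M}(r,s)$ and flattening rank (Lemma \ref{lemma:matching}), driving an iteration whose length is bounded by $n$ and $d$. With that substitution, your plan coincides with the paper's.
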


\subsection{Ramsey properties of algebraic hypergraphs}

The Ramsey problem for hypergraphs is also subject of extensive study. For positive integers $r$ and $t$, let $R_{r}(t)$ denote the smallest $N$ such that any $r$-uniform hypergraph on $N$ vertices contains either a clique or an independent set of size $t$. Erd\H{o}s, Hajnal and Rado \cite{EHR65} and Erd\H{o}s and Rado \cite{ER52} proved that there exist constants $c,C>0$ such that
$$\mbox{tw}_{r-1}(ct^2)<R_{r}(t)<\mbox{tw}_{r}(Ct),$$
where the \emph{tower function} $\mbox{tw}_{k}(x)$ is defined as $\mbox{tw}_{1}(x)=x$ and $\mbox{tw}_{k}(x)=2^{\mbox{tw}_{k-1}(x)}$. For recent developments on the topic, see also the works of Conlon, Fox, and Sudakov \cite{CFS10,CFS13}.

Semi-algebraic graphs and hypergraphs are closely related to algebraic ones and were first studied by Alon, Pach, Pinchasi, Radoi\v ci\'c and Sharir \cite{APPRS}. An $r$-uniform hypergraph $\mathcal{H}$ is \emph{semi-algebraic} of complexity $(n,d,m)$ if $V(\mathcal{H})\subset \mathbb{R}^{n}$, and there exist $m$ polynomials $f_1,\dots,f_{m}:(\mathbb{R}^{n})^{r}\rightarrow\mathbb{R}$ and a Boolean formula $\phi$ such that $\{\mathbf{v}_1,\dots,\mathbf{v}_r\}\in V(\mathcal{H})^{(r)}$ is an edge if and only if $\phi(\{f_{i}(\mathbf{v}_1,\dots,\mathbf{v}_r)\geq 0\}_{i\in [m]})$ is true.

Let $R^{n,d,m}_{r}(t)$ denote the smallest $N$ such that any $r$-uniform semi-algebraic hypergraph of complexity $(n,d,m)$ contains either a clique or an independent set of size $t$. Conlon et al. \cite{CFPSS14} studied the Ramsey problem for semi-algebraic hypergraphs and  proved that there exists $c=c(r,n,m,d)>0$ and $C=C(r,n,m,d)>0$ such that 
$$\mbox{tw}_{r-1}(ct)\leq R_{r}^{n,d,m}(t)<\mbox{tw}_{r-1}(t^C).$$
In the special case of $n=1$, Bukh and Matou\v sek \cite{BM12} proved that if $\mathcal{H}$ is an $r$-uniform semi-algebraic hypergraph of complexity $(n,d,m)$ on $N$ vertices, then one can always find a clique or independent set of size at least $c'\log\log N$, where $c'$ may not only depend on the complexity, but on the defining polynomials $f_1,\dots,f_m$ as well.
 
Quite surprisingly, algebraic hypergraphs behave very differently. 
We show that such hypergraphs of constant complexity contain polynomial sized cliques or independent sets.
 
\begin{theorem}\label{thm:hypergraph}
Let $r,n,d,m$ be positive integers, then there exists $\gamma=\gamma(r,n,d,m)>0$ and $c=c(r,n,d,m)>0$ such that the following holds. If $\mathcal{H}$ is an $r$-uniform algebraic hypergraph of complexity $(n,d,m)$ on $N$ vertices, then $\mathcal{H}$ contains either a clique or an independent set of size at least $cN^{1/\gamma}$. One can choose $$\gamma=2r^2m\left(\binom{n+d}{d}+1\right).$$
\end{theorem}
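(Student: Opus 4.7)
The plan is to prove the theorem in two stages: first handle the strongly-algebraic case of a single defining polynomial ($m=1$), and then reduce the general case by processing the $m$ polynomials simultaneously. The two main ingredients are a Veronese-type linearization that trades degree for dimension, and the polynomial regularity lemma for strongly-algebraic hypergraphs advertised in the abstract.

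For a strongly-algebraic hypergraph with edge relation $f(v_1,\dots,v_r)\neq 0$, $\deg f\le d$, consider the Veronese-type map $\psi:\mathbb{F}^n\to\mathbb{F}^D$, $D=\binom{n+d}{d}$, whose coordinates list all monomials of degree $\le d$. Every monomial of $f$ is a product of $r$ monomials, one in each $v_i$, each of degree $\le d$, so $f(v_1,\dots,v_r)=F(\psi(v_1),\dots,\psi(v_r))$ for a multilinear form $F$ on $(\mathbb{F}^D)^r$. This reduces matters to hypergraphs defined by multilinear forms on $D$-dimensional vectors. For the multilinear case, I would invoke the polynomial regularity lemma to partition the vertex set into polynomially-many parts such that almost every $r$-tuple of parts is homogeneous, meaning that either the entire product of parts is contained in $E(\mathcal{H})$ or none of it is. In the resulting reduced hypergraph on parts, pigeonhole on the Boolean label (edge vs.\ non-edge) extracts a dense monochromatic sub-hypergraph, from which an iterative diagonal argument through the $r$ coordinates produces a monochromatic clique of parts; picking one vertex per part yields a clique or independent set in $\mathcal{H}$. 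Careful tracking through the Veronese dimension plus a double $r$-fold extraction should produce the exponent $2r^2(D+1)$ for the $m=1$ case.

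For the general algebraic case with polynomials $f_1,\dots,f_m$ and Boolean formula $\phi$, the plan is to process all polynomials \emph{simultaneously} via a combined regularity partition that is homogeneous with respect to each $f_i$ at once. This should cost only a factor of $m$ in the exponent, yielding the final $\gamma=2r^2m\bigl(\binom{n+d}{d}+1\bigr)$, as opposed to the multiplicative $(2r^2(D+1))^m$ that a naive polynomial-by-polynomial iteration would incur. On the resulting monochromatic set, every $f_i$ has constant zero-pattern across $r$-tuples, so $\phi$ evaluates uniformly and the induced subhypergraph is a clique or independent set of the required size.

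The core technical step and main obstacle is the polynomial regularity lemma for multilinear hypergraphs. Its proof requires structural analysis of the form $F$, likely through an inductive scheme that at each level either finds a low-dimensional subspace in which most of $F$ lives, or applies a Kov\'ari--S\'os--Tur\'an-type Zarankiewicz bound to an algebraic $r$-partite hypergraph obtained as the ``link'' along one argument. The inductive accounting through the $r$ arguments of $F$ and the $D+1$ coordinates (including the constant monomial), tuned to produce precisely the exponent $2r^2(D+1)$ rather than a larger quantity, is the most delicate part of the argument.
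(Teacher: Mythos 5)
Your approach is genuinely different from the paper's and has several gaps. The paper does \emph{not} use a regularity lemma to prove this theorem; instead it proves Lemma \ref{lemma:matching} (that the dihypergraph defined by a single polynomial contains no member of $\mathcal{M}(r,s)$ for $s>\binom{n+d}{d}$, which is morally equivalent to your Veronese observation --- it is just the flattening-rank bound of Lemma \ref{lemma:tensorrank} applied to the matrix $M_k$), and then a direct iterative argument (Lemma \ref{lemma:weak}): repeatedly pass to the $[r]\setminus\{k\}$-neighborhood complement of a medium-degree $(r-1)$-tuple in whichever $\mathcal{H}_i$ has density bounded away from $1$; each such step shrinks the set by a factor $\alpha/(2r\cdot r!)$ but decreases some parameter $s_{\ell,i,k}$, so there are at most $rm(s-1)$ steps. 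This tracks all $m$ dihypergraphs in parallel and stops when every $\mathcal{H}_i$ is already dense (giving a clique via Lemma \ref{lemma:clique}) or empty. The exponent $2r^2ms$ comes directly out of this iteration.

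The concrete problems with routing through Theorem \ref{thm:regularity} are the following. First, the regularity lemma you invoke does not give homogeneity in the sense you need: it produces $r$-tuples of parts that are either \emph{empty} or of density at least $1-\epsilon$, not density exactly $1$. So ``picking one vertex per part'' does not yield a clique in $\mathcal{H}$; a dense $r$-tuple can still miss the chosen $r$-tuple of vertices. (The paper explicitly notes that true homogeneity is impossible for algebraic hypergraphs, as it would force linear-size bi-cliques.) Handling this asymmetry requires a further argument you have not supplied, and it is in fact exactly where Lemma \ref{lemma:clique} or Lemma \ref{lemma:weak} comes in. Second, the regularity lemma is proved only for strongly-algebraic (single polynomial) hypergraphs; your ``combined regularity partition'' simultaneous with respect to $m$ polynomials is asserted without any construction, and it is not clear the bound would only degrade by a factor $m$. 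In fact Lemma \ref{lemma:ultrastrong} (the weak-VC regularity lemma) would need to be applied to the family of all $m$ zero-pattern dihypergraphs at once, which is a different statement. Third, the exponent $2r^2m(D+1)$ is claimed to ``fall out'' of careful tracking, but without the specific iterative scheme this is not substantiated; the regularity-lemma route, even if repaired, tends to produce exponents involving $r!(2n+1)$ from the number of parts $K\sim (1/\epsilon)^{r!(2n+1)}$, which does not match the target $\gamma$. In short, the Veronese reduction is a correct reformulation of Lemma \ref{lemma:tensorrank}, but the rest of the argument needs the paper's forbidden-configuration machinery (Lemma \ref{lemma:matching} and Lemma \ref{lemma:weak}) rather than regularity.
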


\noindent
Let us remark that the existence of $\gamma$ (whose dependence on $n,d,m$ is not quite explicit) also follows from a model theoretic argument of Malliaris and Shelah, Theorem 3.5 in \cite{MS14} (see also \cite{CS18} for a shorter proof). They show that if $\mathcal{H}$ is a $p$-stable $r$-uniform hypergraph on $N$ vertices (see \cite{CS18} for definitions), then $\mathcal{H}$ contains either a clique or an independent set of size $N^{\gamma}$, where $\gamma=\gamma(r,p)>0$. They also remark that $r$-uniform algebraic hypergraphs of complexity $(n,d,m)$ are also $p$-stable for some $p=p(r,n,d,m)$.

Theorem \ref{thm:hypergraph}  might give the feeling that semi-algebraic graphs and hypergraphs have always stronger Ramsey-type properties than algebraic ones. In the concluding remarks, we will show that this is not always true.

\subsection{Erd\H os-Hajnal conjecture and graphs of bounded VC-dimension} 

Our results are also closely related to the celebrated Erd\H{o}s-Hajnal conjecture. It was proved by Erd\H{o}s and Hajnal \cite{EH89} that if $G$ is a graph on $N$ vertices which contains no induced copy of some fixed graph $H$, then $G$ contains a clique or an independent set of size at least $e^{c\sqrt{\log N}}$, where $c=c(H)>0$ only depends on $H$. They also proposed the conjecture that $G$ contains a clique or an independent set of size at least $N^{c}$ for some $c=c(H)>0$. This conjecture, referred to as the Erd\H{o}s-Hajnal conjecture, is one of the central open problems in graph theory. Say that a family of graphs $\mathcal{G}$ has the \emph{Erd\H{o}s-Hajnal property}, if there exists a constant $c=c(\mathcal{G})>0$ such that every $G\in \mathcal{G}$ contains a clique or an independent set of size at least $|V(G)|^{c}$.  The Erd\H{o}s-Hajnal conjecture is equivalent to the statement that every hereditary family of graphs has the Erd\H{o}s-Hajnal property, unless it is the family of all graphs. Here, we say that a family $\mathcal{F}$ of $r$-uniform hypergraphs is \emph{hereditary}, if $\mathcal{F}$ is closed under taking induced subhypergraphs.

Although the Erd\H{o}s-Hajnal conjecture is still wide open, its bipartite analog was solved twenty years ago by  
Erd\H{o}s, Hajnal and Pach \cite{EHP00}. They proved that if an $N$-vertex graph $G$ contains no induced copy of $H$, then either $G$ or its complement has a bi-clique of size at least $N^{c}$ for some $c=c(H)>0$. Fox and Sudakov \cite{FS09} improved this by showing that either $G$ contains a bi-clique of size $N^{c}$, or it contains an independent set of size at least $N^{c}$. This suggests that finding polynomial sized cliques or independent sets is considerably harder than finding bi-cliques, or their complements.

The aforementioned bound of Erd\H{o}s and Hajnal was recently improved for graphs of bounded VC-dimension. The concept of graphs of bounded VC-dimension extends both algebraic and semi-algebraic graphs of bounded complexity (as we shall see later). Let $\mathcal{F}$ be a family of subsets of a base set $X$. If $U\subset X$, then $\mathcal{F}\mid_{U}=\{A\cap U:A\in\mathcal{F}\}$ is the \emph{projection} of $\mathcal{F}$ to $U$. Also, for every positive integer $z$, the \emph{shatter function of $\mathcal{F}$} is defined as $$\pi_{\mathcal{F}}(z)=\max_{U\in X^{(z)}}|\mathcal{F}\mid_{U}|.$$ The \emph{VC-dimension (Vapnik-Chervonenkis dimension)} of the family $\mathcal{F}$ is the largest integer $n$ such that $\pi_{\mathcal{F}}(n)=2^n$, that is, there exists a set $U\subset X$ of size $n$ such that $\mathcal{F}\mid_{U}=2^{U}$. In this case, we say that $U$ is \emph{shattered} by $\mathcal{F}$. The VC-dimension, introduced by Vapnik and Chervonenkis \cite{VC71}, is one of the most widely used measures of complexity of set systems in computer science and computational geometry. We say that a graph $G$ has \emph{VC-dimension $n$} if the family $\{N(v):v\in V(G)\}$ has VC-dimension~$n$. Ramsey properties of graphs of bounded VC-dimension were recently studied by Fox, Pach and Suk \cite{FPS19}. They proved that if $G$ is a graph on $N$ vertices of VC-dimension $n$, then $G$ contains either a clique or an independent set of size $e^{(\log N)^{1-o(1)}}$, where $o(1)\rightarrow 0$ as $N\rightarrow \infty$ while $n$ is fixed. It is open whether this bound can be improved to $N^{c}$, where $c=c(n)>0$, that is, whether the family of graphs of VC-dimension at most $n$ has the Erd\H{o}s-Hajnal property. Theorem \ref{thm:main} shows that this holds for the family of algebraic graphs of constant complexity. 

\begin{corollary}
Let $n,d,m$ be positive integers. The family of algebraic graphs of complexity $(n,d,m)$ has the Erd\H{o}s-Hajnal property.
\end{corollary}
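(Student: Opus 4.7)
The plan is to deduce this immediately from Theorem~\ref{thm:main}, which already does all the heavy lifting: for any algebraic graph $G$ of complexity $(n,d,m)$ on $N$ vertices, Theorem~\ref{thm:main} produces a clique or an independent set of size at least $c'N^{1/\gamma}$ with $c' = c'(n,d,m) > 0$ and $\gamma = \gamma(n,d,m)$. The Erd\H{o}s-Hajnal property requires a bound of the form $N^{c}$ with no multiplicative prefactor, so the only remaining task is to absorb $c'$ into the exponent.

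I would split the argument by the size of $N$. Set $N_0 := \max\{2,\lceil(1/c')^{2\gamma}\rceil\}$, a constant depending only on $(n,d,m)$. For $N \geq N_0$, choosing $c \leq 1/(2\gamma)$ yields
\[
c' N^{1/\gamma} \;=\; \bigl(c' N^{1/(2\gamma)}\bigr)\cdot N^{1/(2\gamma)} \;\geq\; N^{1/(2\gamma)} \;\geq\; N^{c},
\]
so Theorem~\ref{thm:main} delivers the required monochromatic structure. For $1 \leq N \leq N_0$, I would pick $c$ small enough that $N^{c} \leq 2$ throughout this range (e.g.\ $c \leq 1/\log_2 N_0$); then the bound is trivially satisfied, since any graph on at least two vertices contains either an edge (clique of size $2$) or a non-edge (independent set of size $2$), and the one-vertex case is immediate. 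Taking $c := \min\{1/(2\gamma),\,1/\log_2 N_0\}$ handles both regimes simultaneously, and this $c$ depends only on $(n,d,m)$, as required.

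The main obstacle, such as it is, is purely notational: converting a bound of the form $c' N^{1/\gamma}$ into a clean $N^{c}$ bound valid uniformly in $N$. All the substantive work is already encapsulated in Theorem~\ref{thm:main}, so there is no further algebraic, geometric, or combinatorial content to supply.
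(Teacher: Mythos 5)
Your proposal is correct and matches the paper's approach: the paper states this corollary as an immediate consequence of Theorem~\ref{thm:main} with no further proof supplied, and the only content to add is exactly the constant-absorption argument you give, splitting into the large-$N$ regime (where $c'N^{1/\gamma}\geq N^{1/(2\gamma)}$) and the bounded-$N$ regime (where a clique or independent set of size $2$ trivially suffices).
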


\subsection{Regularity lemma for algebraic hypergraphs}

Graph and hypergraph regularity lemmas are among the most powerful tools in combinatorics. Let $\mathcal{H}$ be an $r$-uniform hypergraph, and let $V_{1},\dots,V_{r}$ be disjoint subsets of $V(\mathcal{H})$. The \emph{density} of $(V_{1},\dots,V_{r})$ is $d(V_{1},\dots,V_{r})=\frac{|E(V_{1},\dots,V_{r})|}{|V_{1}|\dots|V_{r}|},$ where $E(V_1,\dots,V_{r})$ is the set of edges containing exactly one vertex from each of $V_{1},\dots,V_{r}$. A partition of $V(\mathcal{H})$ into parts $V_1,\dots,V_{K}$ is \emph{equitable} if the sizes of $V_1,\dots,V_K$ differ by at most 1.

 If $G$ is a graph, and $(U,V)$ is a pair of disjoint subsets of the vertex set, then $(U,V)$ is \emph{$\epsilon$-regular}, if $|d(U,V)-d(U',V')|<\epsilon$ for every $U'\subset U$, $V'\subset V$ satisfying $|U'|\geq\epsilon |U|$ and $|V'|\geq \epsilon |V|$. The regularity lemma of Szemer\'edi \cite{Sz78} states that if $G$ is a graph and $\epsilon>0$, then $G$ has an equitable partition into $K$ parts with $1/\epsilon<K<m(\epsilon)$, where $m(\epsilon)$ depends only on $\epsilon$, such that all but $\epsilon$ fraction of the pairs of parts are $\epsilon$-regular. Unfortunately, the dependence of $m(\epsilon)$ on $1/\epsilon$ is Ackermann-type, so often this regularity lemma is quite inefficient.
 
For certain special families of hypergraphs one can obtain stronger results. Say that the $r$-tuple of sets $(V_{1},\dots,V_{r})$ in the $r$-uniform hypergraph $\mathcal{H}$ is \emph{$\epsilon$-homogeneous} if $d(V_{1},\dots,V_{r})\leq \epsilon$ or $d(V_{1},\dots,V_{r})\geq 1-\epsilon$. Also, say that $(V_{1},\dots,V_{r})$ is \emph{homogeneous} if it is $0$-homogeneous. It was proved in the aforementioned paper of Fox, Pach and Suk \cite{FPS19} that if $\mathcal{H}$ has VC-dimension $n$, then $\mathcal{H}$ has an equitable partition into $K$ parts with $1/\epsilon<K<O_{r,n}((1/\epsilon)^{2n+1})$ such that all but at most $\epsilon$ fraction of the $r$-tuples of parts are $\epsilon$-homogeneous. The authors refer to this as ``ultra-strong regularity lemma'' and their bound on the number of parts has exponent which is optimal up to an absolute constant factor. This result
improves on a sequence of earlier regularity lemmas for this class of hypergraphs \cite{AFN07,CS,LSz07}. One can do even better for semi-algebraic graphs. Fox, Pach and Suk \cite{FPS16} also proved that if $\mathcal{H}$ is semi-algebraic of constant complexity, then $\mathcal{H}$ has an equitable partition into $K$ parts, where $K$ is bounded by a polynomial of $\epsilon$ depending only on the complexity, such that all but at most $\epsilon$ fraction of the $r$-tuples of parts are homogeneous.
 
 But what can we say about algebraic hypergraphs? It turns out that it is too much to ask for a partition in which all but a small fraction of $r$-tuples of parts are homogeneous. Indeed, if this would be true, it would imply that if $G$ is an algebraic graph of complexity $(n,d,m)$, then either $G$ or its complement contains a bi-clique of linear size. In the concluding remarks, we present an example showing that this does not hold in general. On the other hand, Chernikov and Starchenko \cite{CS} showed (see Theorem 4.13) that if a hypergraph $\mathcal{H}$ is $p$-stable, then $\mathcal{H}$ has an equitable partition into $K$ parts with $1/\epsilon<K<O_{p}((1/\epsilon)^{p+1})$ such that all $r$-tuples of parts are $\epsilon$-homogeneous. Here, we will prove the following interesting regularity type lemma for strongly-algebraic hypergraphs, which shows that strongly-algebraic hypergraphs are halfway between semi-algebraic hypergraphs, and hypergraphs of bounded VC-dimension.  

\begin{theorem}\label{thm:regularity}
Let $r,n,d$ be positive integers, then there exists $c=c(r,n,d)$ such that the following holds. Let $\mathcal{H}$ be a strongly-algebraic $r$-uniform hypergraph of complexity $(n,d)$. Then $V(\mathcal{H})$ has an equitable partition $V_1,\dots,V_{K}$ with $8/\epsilon<K<c(1/\epsilon)^{r!(2n+1)}$ parts such that all but at most $\epsilon$-fraction of the $r$-tuples of parts are either empty, or have density at least $1-\epsilon$.
\end{theorem}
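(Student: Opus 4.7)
The plan is to adapt the Fox--Pach--Suk ultra-strong regularity lemma for set systems of bounded VC-dimension to the algebraic setting, and to supplement it with an algebraic rigidity step that upgrades ``non-edge density close to $1$'' to ``non-edge density exactly $1$''. I would proceed by induction on $r$, which I expect to give rise to the $r!$ factor in the exponent.

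The first step is a shatter-function bound for slices. For each ordered $(r-1)$-tuple $(\mathbf{v}_2,\dots,\mathbf{v}_r)$, the slice $N(\mathbf{v}_2,\dots,\mathbf{v}_r)=\{\mathbf{v}_1\in\mathbb{F}^n:f(\mathbf{v}_1,\mathbf{v}_2,\dots,\mathbf{v}_r)=0\}$ is an algebraic variety of degree at most $d$ in $\mathbb{F}^n$. Via a dimension argument in $\mathbb{F}[x_1,\dots,x_n]_{\leq d}$ in the style of R\'onyai--Babai--Ganapathy (together with the Sauer--Shelah type shatter-function bound), the family of such slices has shatter function polynomial of degree roughly $2n+1$. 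Applying the ultra-strong regularity lemma for hypergraphs of bounded shatter-function exponent to the non-edge hypergraph $\{T\in V^{(r)}:f(T)=0\}$ produces an equitable partition into $K_0=O((1/\epsilon)^{2n+1})$ parts in which all but an $\epsilon$-fraction of $r$-tuples of parts have non-edge density either $\leq\epsilon$ or $\geq 1-\epsilon$. What remains is to promote the ``non-edge density $\geq 1-\epsilon$'' case to ``non-edge density $= 1$'', i.e.\ to an empty edge set.

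This promotion is the algebraic rigidity step, and it is where the ``single polynomial'' hypothesis is essential. The key claim is: if $(U_1,\dots,U_r)$ is an $r$-tuple of large sets with non-edge density at least $1-\epsilon$, then one may refine each $U_i$ into boundedly many subparts so that, on all but an $O(\epsilon)$-fraction of sub-tuples, the polynomial $f$ vanishes identically on $U'_1\times\cdots\times U'_r$. I would prove this by successive projection: fix a ``typical'' coordinate $\mathbf{v}_r\in U_r$ so that the $(r-1)$-ary polynomial $f(\cdot,\dots,\cdot,\mathbf{v}_r)$ has a zero set of density $\geq 1-\sqrt{\epsilon}$ on $U_1\times\cdots\times U_{r-1}$, invoke the inductive regularity lemma on this $(r-1)$-ary slice, and then glue the refinements back together using that only $\binom{n+d}{d}$ distinct ``polynomial behaviours'' in the $\mathbf{v}_r$ variable can occur. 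Each recursive call multiplies the exponent by a factor that encodes the current arity, explaining the factorial growth.

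The main obstacle is controlling the blow-up in partition size during this promotion. Each $r$-tuple of parts with non-edge density $\geq 1-\epsilon$ requires its own refinement, and one must reconcile these refinements across different tuples while keeping the overall partition equitable and of size $O((1/\epsilon)^{r!(2n+1)})$. Careful bookkeeping is required to ensure that (a) the refinements of different tuples are compatible (common subdivision), (b) the algebraic rigidity at each recursive level only costs a multiplicative factor of $(1/\epsilon)^{2n+1}$ in the number of parts, and (c) final equitization (rounding part sizes) introduces only a constant-factor loss. An auxiliary Zarankiewicz-type estimate for algebraic bipartite graphs of complexity $(n,d)$, which bounds the number of complete bipartite subgraphs of non-edges that can appear, should be used to bound how many tuples survive each refinement and hence to keep the $r!$ in the exponent tight.
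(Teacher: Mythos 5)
Your first step matches the paper: both apply the Fox--Pach--Suk ultra-strong regularity lemma, after bounding the shatter function of neighborhoods via the R\'onyai--Babai--Ganapathy zero-pattern count (the paper's \Cref{lemma:weakvc} and \Cref{lemma:ultrastrong}). However, your proposed ``promotion'' step --- upgrading non-edge density $\geq 1-\epsilon$ to exact emptiness --- takes a fundamentally different route from the paper, and it has a genuine gap that you acknowledge but do not resolve.

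The paper does not refine the parts further at all. Instead, it proves a \emph{deletion} lemma (\Cref{lemma:cleaning}): from each part one may remove a fraction of roughly $\epsilon_0^{1/r!}$ of the vertices so that every $r$-tuple of parts that had density $\leq \epsilon_0$ becomes completely empty. The algebraic input for this is not a count of ``distinct polynomial behaviours'' in the last coordinate, but the forbidden-subhypergraph fact (\Cref{lemma:matching}, via the flattening-rank bound of \Cref{lemma:tensorrank}): a strongly-algebraic $r$-uniform hypergraph of complexity $(n,d)$ contains no member of $\mathcal{M}(r,s)$ for $s > \binom{n+d}{d}$. Roughly, if a sparse $r$-tuple of parts is not actually empty, one can greedily extract a ``focused'' $\mathcal{M}(r,s)$-pattern, a contradiction; the greedy extraction is where the recursion in $r$ and the $1/r!$ exponent arise. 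After deletion one re-equitizes and sets $\epsilon_0 \approx \epsilon^{r!}$, which is what turns $K = O((1/\epsilon_0)^{2n+1})$ into $O((1/\epsilon)^{r!(2n+1)})$. Your explanation for the $r!$ --- that each recursive call multiplies the exponent by the current arity --- describes a different mechanism and would not produce the paper's bound.

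The concrete problem with your refinement-and-glue strategy is the one you flag in your final paragraph: reconciling the separate refinements required by each dense-non-edge $r$-tuple into a common subdivision. Each sparse tuple would demand its own refinement of $U_1, \dots, U_r$ depending on the chosen $\mathbf{v}_r$, and the common refinement over all tuples and all relevant $\mathbf{v}_r$'s blows up uncontrollably. The observation that the slicing polynomials $f(\cdot,\dots,\cdot,\mathbf{v}_r)$ live in a linear space of dimension $\binom{n+d}{d}$ does not bound the number of distinct zero sets over a general field, so it cannot by itself bound the number of distinct refinements. The paper sidesteps this entirely: deleting vertices preserves the part structure, so there is nothing to glue, and the price is only a multiplicative loss in part size rather than a multiplicative blow-up in the number of parts.
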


\noindent
Another regularity lemma in algebraic setting was proved by Tao \cite{T05}. This lemma applies to graphs and hypergraphs whose vertices and edges are both defined by algebraic varieties of bounded complexity over some field $\mathbb{F}$ and more generally are definable sets of bounded complexity (see \cite{T05} for details). For example, the above mentioned, Paley-graph falls into this category. Tao's lemma shows that remarkably one can find an $\epsilon$-regular partition of the vertices of the graph or hypergraph in question into constant number of (definable) parts, where $\epsilon\approx |\mathbb{F}|^{-1/4}$. Interestingly, this seems to be quite different from our regularity lemma for strongly-algebraic hypergraphs. 

Let us present an application of Theorem \ref{thm:regularity} to the Ramsey problem for strongly-algebraic hypergraphs.
The following theorem roughly tells us that finding an independent set of size $s$ for some constant $s=s(r,n,d)$ in a strongly-algebraic $r$-uniform hypergraph of complexity $(n,d)$ is almost as `costly' as finding a very large independent set. This suggests that the exponent in Theorem \ref{thm:hypergraph} can be improved and in order to do so, it is enough to show that every such hypergraph contains either an independent set of size $s$, or a large clique.

\begin{theorem}\label{thm:hereditary}
Let $r,n,d$ be positive integers, $s=(r-1)\binom{n+d}{d}+1$, and $c,\alpha,\beta>0$. Let $\mathcal{F}$ be a hereditary family of strongly-algebraic $r$-uniform hypergraphs of complexity $(n,d)$. Suppose that each $\mathcal{H}\in\mathcal{F}$ on $N$ vertices contains either a clique of size at least $cN^{\alpha}$, or an independent set of size~$s$. Then every $\mathcal{H}\in\mathcal{F}$ on $N$ vertices contains either a clique of size $c_1N^{d\alpha\beta/n}$, or an independent set of size at least $c_2N^{1-\beta}$, where $c_1,c_2>0$ depend only on the parameters $r,n,d,c,\alpha,\beta$, and $d=d(r)>0$ depends only on $r$.
\end{theorem}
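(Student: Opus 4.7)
The plan is to apply Theorem~\ref{thm:regularity} to $\mathcal{H}$ and then lift the hypothesis, applied to a suitable transversal of the resulting partition, to a polynomial Ramsey statement for $\mathcal{H}$. Given $\mathcal{H}\in\mathcal{F}$ on $N$ vertices, I may assume by duality that $\mathcal{H}$ contains no clique of size $c_1 N^{d\alpha\beta/n}$ and aim to produce an independent set of size $c_2 N^{1-\beta}$.

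First I would apply Theorem~\ref{thm:regularity} with parameter $\eps = C_0 N^{-\delta}$, where $\delta>0$ is tuned so that the number of parts $K\asymp(1/\eps)^{r!(2n+1)}$ satisfies $K\asymp N^{\beta}$; each part then has size $\asymp N^{1-\beta}$. Since at most an $\eps$-fraction of the $r$-tuples of parts are \emph{bad} (density strictly between $0$ and $1-\eps$), a greedy cleanup (or random sub-selection) passes to a sub-collection $S\subseteq[K]$ with $|S|=\Omega(K)$ in which every $r$-tuple of parts is either \emph{empty} (density $0$) or \emph{dense} (density $\geq 1-\eps$).

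Second, I would pick an arbitrary transversal $T=\{u_i:i\in S\}$ with $u_i\in V_i$. Since $\mathcal{H}[T]$ is itself a strongly-algebraic hypergraph of complexity $(n,d)$ and $\mathcal{F}$ is hereditary, $\mathcal{H}[T]\in\mathcal{F}$; the hypothesis then produces either a clique of size $c|T|^{\alpha}$ or an independent set of size $s$ in $\mathcal{H}[T]$. A clique in $\mathcal{H}[T]$ lifts to a clique of the same size in $\mathcal{H}$, which I would then amplify by combining with the hereditary internal cliques of size $\asymp N^{(1-\beta)\alpha}$ that the hypothesis (applied inside each part) guarantees, via a counting/transversal argument that controls the at-most-$\eps$ loss inside dense $r$-tuples; this gives a clique of size $\ge c_1 N^{d\alpha\beta/n}$ in $\mathcal{H}$. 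In the independent-set case, the $s$ representatives $u_{i_1},\ldots,u_{i_s}$ lie in $s$ distinct parts, and since $s=(r-1)\binom{n+d}{d}+1$ is precisely one more than $(r-1)$ times the dimension $D=\binom{n+d}{d}$ of the link-polynomial space $\{f(\mathbf{v}_1,\ldots,\mathbf{v}_{r-1},\cdot):\mathbf{v}_j\in\mathbb{F}^n\}$, a linear-algebra pigeonhole on the links of the $(r-1)$-subsets of $T$ forces a definite fraction of the resulting $r$-tuples of parts to be genuinely empty rather than merely dense-with-a-non-edge; running this argument over a family of transversals and combining a transversal of the resulting mutually-empty parts with the hereditarily guaranteed internal independent sets inside each, yields the desired independent set of size $\Omega(N^{1-\beta})$.

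The main obstacle is the independent-set amplification: upgrading ``constant-size independent set in a transversal'' to ``polynomial-size independent set in $\mathcal{H}$''. This is where the exact value $s=(r-1)D+1$ is essential; it is engineered to be one above the algebraic threshold, which lets one transfer the vanishing of a link polynomial on a transversal to vanishing on a large fraction of an entire part, turning scattered transversal non-edges into wholly empty $r$-tuples of parts. Once this amplification is in place, matching the exponents $d\alpha\beta/n$ (clique) and $1-\beta$ (independent set) with the parameters of Theorem~\ref{thm:regularity} is a routine optimization in $\delta$.
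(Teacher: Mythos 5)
There is a genuine gap, and it is exactly at the place you flag as ``the main obstacle.'' Your plan is to first pass to a sub-collection $S\subseteq[K]$ with no bad $r$-tuples, then pick a transversal $T$, apply the hypothesis to $\mathcal{H}[T]$, and in the independent-set case try to upgrade a constant-size independent transversal to a polynomial-size independent set. The difficulty is that even if every $r$-tuple of parts over $S$ is either empty or has density $\geq 1-\epsilon$, a transversal $r$-tuple $\{u_{i_1},\dots,u_{i_r}\}$ being a non-edge of $\mathcal{H}$ tells you nothing: a dense $r$-tuple of parts can perfectly well contain your chosen transversal as a non-edge. So from an independent set in $\mathcal{H}[T]$ you cannot conclude that the corresponding part $r$-tuples are empty. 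Your proposed fix (a linear-algebra pigeonhole on links, run over a family of transversals) is not a proof; you would still need to bridge ``transversal non-edge'' to ``empty part $r$-tuple,'' and that bridge is precisely what is missing. The paper avoids this by choosing the transversal first and then defining an auxiliary $r$-uniform hypergraph $\mathcal{G}$ on $[K]$ where $\{i_1,\dots,i_r\}$ is an edge if the transversal is \emph{consistent} with the regularity partition on that $r$-tuple (edge at the transversal \emph{and} density $\geq 1-\epsilon$, or non-edge at the transversal \emph{and} the part $r$-tuple is empty). A random transversal makes $\mathcal{G}$ dense, so Lemma~\ref{lemma:clique} gives a clique $J$ in $\mathcal{G}$ of size $\Theta(\epsilon^{-1/(r-1)})$; on such a $J$, a non-edge of $\mathcal{H}$ among the representatives really does mean the corresponding part $r$-tuple is empty. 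That is the key structural idea you do not have.

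A few further points. First, the cleanup to a sub-collection with no bad $r$-tuples cannot give $|S|=\Omega(K)$: the best you can do by Lemma~\ref{lemma:clique} on the good-tuple hypergraph is $\Theta(\epsilon^{-1/(r-1)})$, which is a vanishing fraction of $K\approx\epsilon^{-r!(2n+1)}$ (this doesn't kill the exponent, but your parameter accounting is off). Second, the clique case needs no amplification at all: the clique $cK'^{\alpha}$ found in the transversal sub-hypergraph already gives $N^{\alpha\beta/(r-1)r!(2n+1)}=N^{d(r)\alpha\beta/n}$, so combining with internal cliques in the parts is unnecessary (and would require additional care, since dense $r$-tuples are not complete). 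Third, the final step that makes $s=(r-1)\binom{n+d}{d}+1$ matter is not a pigeonhole on link polynomials directly; it is Lemma~\ref{lemma:forbidden2}, which forbids the configuration $\mathcal{N}_{r,s}$ (a set of $s$ disjoint edges whose cross $r$-tuples are all non-edges) using the max-flattening rank of semi-diagonal tensors (Theorem~\ref{thm:semidiag}). Once you know the $s$ part $r$-tuples indexed by the independent transversal are genuinely empty, if every corresponding part $V_i$ contained an edge $f_i$, those $s$ disjoint edges would realize $\mathcal{N}_{r,s}$, contradicting Lemma~\ref{lemma:forbidden2}; hence one part $V_i$ is an independent set of size $\Omega(N/K)=\Omega(N^{1-\beta})$. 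Your intuition that $s$ is ``one above the algebraic threshold'' is right in spirit, but the actual mechanism is this tensor-rank bound, which your proposal does not supply.
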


Our paper is organized as follows. In the next section, we introduce our notation, and we prepare several tools in order to prove our theorems. We also present the proof of Theorem \ref{thm:hypergraph} in this section. Then, in Section \ref{sect:graphs}, we present the proof of Theorems \ref{thm:main} and \ref{thm:multicolor}. We continue with the proof of Theorem \ref{thm:regularity} in Section \ref{sect:regularity}, and present the proof of Theorem \ref{thm:hereditary} in Section \ref{sect:hypergraph}. Finally, we provide some discussion in the Concluding remarks.

\section{Properties of algebraic hypergraphs}

In this section, we prepare some tools for the proofs of our main results, and we prove Theorem \ref{thm:hypergraph}. First, let us introduce the notation we use throughout the paper, which is mostly conventional.

\subsection{Notation and preliminaries}

If $V$ is some set and $s$ is a positive integer, $V^{(s)}$ denotes the family of $s$ element subsets of $V$. Let $\mathcal{H}$ be an $r$-uniform hypergraph.  The \emph{density} of $\mathcal{H}$ is $d(\mathcal{H})=|E(\mathcal{H})|/\binom{|V(\mathcal{H})|}{r}$. If $X\in V(\mathcal{H})^{(s)}$ for some $1\leq s\leq r-1$, then $$N_{\mathcal{H}}(X)=N(X)=\{Y\in V(\mathcal{H})^{(r-s)}: X\cup Y\in E(\mathcal{H})\}$$ is the neighborhood of $X$. 

The following is a well known result, which tells us that hypergraphs of density very close to 1 contain large cliques.

\begin{lemma}\label{lemma:clique}
Let $N$ be a positive integer, $\frac{1}{N^{r-1}}<\alpha<\frac{1}{2}$, and let $\mathcal{H}$ be an $r$-uniform hypergraph on $N$ vertices of density at least $1-\alpha$. Then $\mathcal{H}$ contains a clique of size at least $\frac{1}{4}(1/\alpha)^{1/(r-1)}$. 
\end{lemma}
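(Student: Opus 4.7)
The plan is to pass to the complement hypergraph $\bar{\mathcal{H}}$ on the same vertex set, which satisfies $|E(\bar{\mathcal{H}})|\le \alpha\binom{N}{r}\le \alpha N^{r}/r!$, and then run the standard deletion method to find a large independent set in $\bar{\mathcal{H}}$, which is the same thing as a clique in $\mathcal{H}$. Concretely, I would include each vertex of $\mathcal{H}$ in a random set $S$ independently with probability $p\in(0,1]$ and then remove one vertex from each edge of $\bar{\mathcal{H}}$ that happens to fall inside $S$. The resulting set $S'$ is a clique in $\mathcal{H}$ whose expected size satisfies
\[
\mathbb{E}|S'|\ \ge\ pN-\frac{p^{r}\alpha N^{r}}{r!}.
\]

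Optimizing in $p$ points to the choice $p^{r-1}=(r-1)!/(\alpha N^{r-1})$, at which value the lower bound above becomes
\[
((r-1)!)^{1/(r-1)}\cdot\frac{r-1}{r}\cdot\Bigl(\frac{1}{\alpha}\Bigr)^{1/(r-1)}.
\]
A short check shows that $((r-1)!)^{1/(r-1)}(r-1)/r\ge 1/2$ for every integer $r\ge 2$, with the minimum attained at $r=2$, and this is comfortably larger than $1/4$. So the desired bound follows by standard probabilistic existence as long as the chosen $p$ lies in $(0,1]$, equivalently as long as $\alpha N^{r-1}\ge(r-1)!$.

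If instead $\alpha N^{r-1}<(r-1)!$, I would simply take $p=1$: the construction then deterministically produces a set $S'$ of size at least $N-\alpha N^{r}/r!\ge N(1-1/r)\ge N/2$, and since the standing hypothesis $\alpha>1/N^{r-1}$ gives $N>(1/\alpha)^{1/(r-1)}$, this yields a clique of size at least $(1/\alpha)^{1/(r-1)}/2$, again stronger than the advertised bound. No step of this argument is deep; the only real subtlety is the book-keeping needed to ensure $p\le 1$, which is exactly why the hypothesis $\alpha>1/N^{r-1}$ appears in the statement.
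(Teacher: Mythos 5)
Your proposal is correct and uses essentially the same argument as the paper: both apply the deletion method, sampling vertices independently with a suitable probability $p$ and then removing one vertex from each non-edge that survives. The only cosmetic differences are that you optimize $p$ (obtaining the slightly better constant $1/2$ at the cost of a case split when the optimal $p$ would exceed $1$), whereas the paper simply fixes $p=(2N\alpha^{1/(r-1)})^{-1}$, which the hypothesis $\alpha>1/N^{r-1}$ guarantees is below $1$, and settles for the constant $1/4$.
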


\begin{proof}
Select each vertex of $\mathcal{H}$ with probability $p=(2N\alpha^{1/(r-1)})^{-1}<1$, and let $U$ be the set of selected vertices. Let $X$ be the number of non-edges spanned by $U$, then $\mathbb{E}(X)\leq \alpha p^{r}\binom{N}{r}$. Delete a vertex of each non-edge in $\mathcal{H}[U]$, and let $V$ be the resulting set, then $V$ is a clique. We have
$$\mathbb{E}(|V|)\geq \mathbb{E}(|U|-X)=pN-\alpha p^{r}\binom{N}{r}\geq \frac{pN}{2}\geq \frac{1}{4}\left(\frac{1}{\alpha}\right)^{1/(r-1)},$$
where the last two inequalities hold by the choice of $p$. Hence, there exists a choice for $U$ such that $|V|\geq\frac{1}{4}(1/\alpha)^{1/(r-1)}$.
\end{proof}

In order to describe hypergraphs defined by a single polynomial, let us introduce directed hypergraphs. A \emph{directed $r$-uniform hypergraph (or $r$-uniform dihypergraph)} is a pair $\mathcal{H}=(V,E)$, where $V$ is the set of vertices, and $E$ is a set of (ordered) $r$-tuples of distinct elements of $V$, called edges. Given an $r$-element subset $f$ of $V$, an \emph{orientation of $f$} is an $r$-tuple containing the elements of $f$. In a directed $r$-uniform hypergraph, we allow multiple orientations of the same $r$-element set. Say that an $r$-element set $X$ of a dihypergraph $\mathcal{H}$ is \emph{complete} if all $r!$ orientations of $X$ are edges. Let  $[\mathcal{H}]$ be the hypergraph formed by the complete edges of $\mathcal{H}$. A \emph{clique} in an $r$-uniform directed hypergraph $\mathcal{H}$ is a clique in $[\mathcal{H}]$. Also, an \emph{independent set} of $\mathcal{H}$ is a subset of vertices in which no $r$-tuple forms an edge.  

Given $I\subset [r]$ and an $|I|$-tuple of vertices $X=(v_i)_{i\in I}$, the \emph{$I$-neighborhood} of $X$ is  $N_{I}(X)=N_{\mathcal{H},I}(X)=\{(v_j)_{j\in [r]\setminus I}\in V^{r-|I|}:(v_1,\dots,v_r)\in E\}$. Note that the vertices of $X$ appear in the corresponding directed edges in the order given by $I$. In the case of directed graphs, we write simply $N^{+}(v)$ and $N^{-}(v)$ for the out- and in-neighborhood of $v$, respectively. If $X$ is an $r$-tuple and $k\in [r]$, then $X(k)$ is the $k$-th element of $X$, and $\hat{X}(k)=(X(1),\dots,X(k-1),X(k+1),\dots,X(r)).$

\subsection{Linear algebra}

In this section, we collect some facts from linear algebra and we introduce the flattening rank of tensors. Let $T:A_1\times\dots\times A_r\rightarrow \mathbb{F}$ be an $r$-dimensional tensor, where $A_1,\dots,A_r$ are finite sets, and $\mathbb{F}$ is a field. For $i\in [r]$, the \emph{$i$-flattening rank} of $T$, denoted by $\frank_{i}(T)$, is defined as follows. Let $B_{i}=A_1\times\dots\times A_{i-1}\times A_{i+1}\times\dots\times A_{r}$, then $T$ can be viewed as a matrix $M$ with rows indexed by $A_{i}$, and columns indexed by $B_{i}$. Then $\frank_{i}(T):=\mbox{rank}(M)$. Note that $\frank_{i}(T)=1$ if and only if $T\neq 0$, and there exist two functions $f:A_{i}\rightarrow \mathbb{F}$ and $g:B_i\rightarrow \mathbb{F}$ such that $T(a_1,\dots,a_r)=f(a_i)g(a_1,\dots,a_{i-1},a_{i+1},\dots,a_{r})$. Also, the $i$-flattening rank of $T$ is the minimum $t$ such that $T$ is the sum of $t$ tensors of $i$-flattening rank 1. Equivalently, $\frank_{i}(T)$ is the dimension of the vector space generated by the rows of $T$ in the $i$-th dimension.

It is easy to see that the $i$-flattening rank satisfies the usual properties of rank. It is subadditive, and if $T'$ is a subtensor of $T$, then $\frank_i(T')\leq \frank_i(T)$. Here, $T':A_1'\times\dots\times A_{r}'\rightarrow\mathbb{F}$ is a \emph{subtensor} of $T:A_1\times\dots\times A_{r}\rightarrow\mathbb{F}$ if $A_{i}'\subset A_{i}$ for $i\in [r]$, and $T'(a_1,\dots,a_r)=T(a_1,\dots,a_r)$ for $(a_1,\dots,a_{r})\in A_{1}'\times\dots\times A_{r}'$. A detailed discussion of $i$-flattening rank and its combinatorial applications can be found in \cite{frank}. In this paper, we use the following property of the $i$-flattening rank of tensors defined by polynomials. 

\begin{lemma}\label{lemma:tensorrank}
Let $f:(\mathbb{F}^{n})^{r}\rightarrow\mathbb{F}$ be a polynomial of degree at most $d$, and let $V\subset \mathbb{F}^{n}$. Define the $r$-dimensional tensor $T:V^{r}\rightarrow \mathbb{F}$ such that $T(X):=f(X)$ for $X\in V^{r}$. Then $\frank_{i}(T)\leq \binom{n+d}{d}$ for $i\in[r]$.
\end{lemma}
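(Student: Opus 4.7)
The plan is to expand $f$ as a polynomial in the $n$ variables of one of the $r$ blocks and read off the rank from this expansion. By symmetry, it suffices to bound $\frank_1(T)$; the argument for general $i$ is identical.

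First I would fix coordinates, writing each input as $\mathbf{x}_j = (x_{j,1},\dots,x_{j,n}) \in \mathbb{F}^n$. Viewing $f$ as a polynomial in the $rn$ variables $\{x_{j,k}\}$ of total degree at most $d$, I would group terms by their $\mathbf{x}_1$-monomial and write
\[
f(\mathbf{x}_1,\dots,\mathbf{x}_r) \;=\; \sum_{\alpha} \mathbf{x}_1^{\alpha}\, g_{\alpha}(\mathbf{x}_2,\dots,\mathbf{x}_r),
\]
where $\alpha=(\alpha_1,\dots,\alpha_n)$ ranges over nonnegative integer exponent vectors with $|\alpha|\le d$, the monomial $\mathbf{x}_1^{\alpha}$ denotes $\prod_{k=1}^{n}x_{1,k}^{\alpha_k}$, and each $g_{\alpha}$ is a polynomial in the remaining $(r-1)n$ variables. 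The number of admissible exponent vectors $\alpha$ is exactly $\binom{n+d}{d}$.

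Evaluating on $V^r$, this identity realizes $T$ as a sum of at most $\binom{n+d}{d}$ tensors of the form $(\mathbf{v}_1,\dots,\mathbf{v}_r)\mapsto u_{\alpha}(\mathbf{v}_1)\,w_{\alpha}(\mathbf{v}_2,\dots,\mathbf{v}_r)$, with $u_\alpha(\mathbf{v}_1)=\mathbf{v}_1^\alpha$ and $w_\alpha=g_\alpha|_{V^{r-1}}$. Each such summand has $1$-flattening rank at most $1$, so by subadditivity of $\frank_1$ we obtain $\frank_1(T)\le \binom{n+d}{d}$. Repeating the expansion with the block $\mathbf{x}_i$ playing the role of $\mathbf{x}_1$ gives the same bound on $\frank_i(T)$.

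There is essentially no obstacle here beyond recognizing that flattening the tensor along the $i$-th coordinate corresponds precisely to collecting $f$ by monomials in that block of variables, and that the degree-$\le d$ constraint caps the number of such monomials by $\binom{n+d}{d}$.
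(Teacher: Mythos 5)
Your proof is correct and follows exactly the same approach as the paper: expand $f$ by monomials $\mathbf{x}_i^{\alpha}$ in the $i$-th block (there are at most $\binom{n+d}{d}$ such monomials when the total degree is at most $d$), observe that each term yields a tensor of $i$-flattening rank at most $1$, and conclude by subadditivity of the flattening rank.
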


\begin{proof}
 Let $\Lambda=\{\mathbf{\alpha}\in \mathbb{N}^{n}:\sum_{i=1}^{n}\mathbf{\alpha}(i)\leq d\}$. For $\mathbf{x}\in \mathbb{F}^{n}$ and $\alpha\in \mathbb{N}^{n}$, let $h_{\alpha}(\mathbf{x})=\mathbf{x}(1)^{\mathbf{\alpha}(1)}\dots \mathbf{x}(n)^{\mathbf{\alpha}(n)}$.
Then the polynomial $f$ can be written as $$f(X)=\sum_{\alpha\in\Lambda} h_{\alpha}(X(i))g_{\mathbf{\alpha}}(\hat{X}(i)),$$
where $X\in V^{r}$ is considered as an $r$-tuple of elements of $V$, and $g_{\mathbf{\alpha}}:\mathbb{F}^{(r-1)n}\rightarrow\mathbb{F}$ is some polynomial for $\alpha\in\Lambda$. But then $T$ is the sum of $|\Lambda|\leq\binom{n+d}{d}$ tensors of $i$-flattening rank 1, so $\frank_i(T)\leq \binom{n+d}{d}$.
\end{proof}

We use this lemma in the next section to define a family of forbidden directed subhypergraphs in dihypergraphs defined by a single polynomial. Working with the flattening rank (versus matrix rank) is not necessary for this application, but later (see Section \ref{sect:hypergraph}) we will exploit more of its properties.

\subsection{Forbidden subhypergraphs}

In this section, we show that algebraic dihypergraphs defined by a single polynomial avoid the following simple family of dihypergraphs.

\begin{definition}
\normalfont For positive integers $r,s,k$ with $k\in [r]$, let $\mathcal{M}(r,s,k)$ be the family of $r$-uniform dihypergraphs $\mathcal{M}$ having the following form. There are given, not all necessarily distinct, vertices $u_{i,j}$ for $(i,j)\in [s]\times [r]$ forming the vertex set of $\mathcal{M}$. For $(i,i')\in [s]\times [s]$, let $X_{i,i'}$ be the $r$-tuple (of distinct) vertices satisfying $X_{i,i'}(j)=u_{i,j}$ for $j\in [r]\setminus\{k\}$, and $X_{i,i'}(k)=u_{i',k}$. Then  $X_{i,i}$ is an edge of $\mathcal{M}$ for $i\in [s]$, and $X_{i,i'}$ is not an edge for $1\leq i<i'\leq s$. The rest of the $r$-tuples can be either edges or non-edges. Also, let $\mathcal{M}(r,s)$ be the union of the families $\mathcal{M}(r,s,k)$ for $k\in [r]$.
\end{definition}

\begin{lemma}\label{lemma:matching}
Let $\mathcal{H}$ be an algebraic $r$-uniform dihypergraph of complexity $(n,d,1)$. Then $\mathcal{H}$ contains no member of $\mathcal{M}(r,s)$ for $s>\binom{n+d}{d}$.
\end{lemma}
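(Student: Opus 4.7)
The plan is to argue by contradiction: assume that $\mathcal{H}$ contains some $\mathcal{M} \in \mathcal{M}(r,s,k)$ with $k \in [r]$ and $s > \binom{n+d}{d}$, and extract an $s \times s$ matrix whose rank is simultaneously forced to equal $s$ (by the combinatorial structure of $\mathcal{M}$) and to be at most $\binom{n+d}{d}$ (by Lemma~\ref{lemma:tensorrank}).

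Let $f : (\mathbb{F}^n)^r \to \mathbb{F}$ be the polynomial of degree at most $d$ defining $\mathcal{H}$, using the convention that an ordered $r$-tuple is an edge iff $f$ is nonzero on it. Let $V \subset \mathbb{F}^n$ be a finite set containing all vertices $u_{i,j}$, and let $T : V^r \to \mathbb{F}$ be the tensor obtained by restricting $f$. Lemma~\ref{lemma:tensorrank} then yields $\frank_k(T) \le \binom{n+d}{d}$. Next I would form the $s \times s$ matrix $M$ with entries $M_{i',i} := f(X_{i,i'})$. The key observation is that, by the definition of $\mathcal{M}(r,s,k)$, the $k$-th coordinate of $X_{i,i'}$ equals $u_{i',k}$ and depends only on $i'$, while the remaining $r-1$ coordinates form the tuple $\hat X_i := (u_{i,j})_{j \neq k}$ and depend only on $i$. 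Hence $M$ is a submatrix of the $k$-flattening of $T$ (its rows indexed by the values $u_{1,k}, \dots, u_{s,k}$, its columns indexed by $\hat X_1, \dots, \hat X_s$), and therefore $\rank(M) \le \frank_k(T) \le \binom{n+d}{d}$.

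To conclude, I would read the shape of $M$ off the hypotheses on $\mathcal{M}$. The diagonal tuples $X_{i,i}$ are edges of $\mathcal{H}$, so $M_{i,i} = f(X_{i,i}) \ne 0$; and whenever $i < i'$, the tuple $X_{i,i'}$ is a non-edge, which forces $M_{i',i} = f(X_{i,i'}) = 0$. Thus $M$ is upper triangular with nonzero diagonal, so $\rank(M) = s$. Combining the two bounds gives $s \le \binom{n+d}{d}$, contradicting the hypothesis $s > \binom{n+d}{d}$.

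The main subtle point is the identification of $M$ as a submatrix of the $k$-flattening of $T$, which depends entirely on the design of $\mathcal{M}(r,s,k)$: having the $k$-th coordinate of $X_{i,i'}$ depend only on $i'$ and the other coordinates only on $i$ is exactly what makes each entry of $M$ a single entry of the flattened matrix. Once this is in place, the triangular sign pattern is immediate and Lemma~\ref{lemma:tensorrank} does the rest; the hypothesis $s > \binom{n+d}{d}$ matches the flattening rank bound on the nose.
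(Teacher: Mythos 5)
Your proof is correct and matches the paper's argument essentially verbatim: both form the tensor $T$ from the defining polynomial, apply Lemma~\ref{lemma:tensorrank} to bound the $k$-flattening rank by $\binom{n+d}{d}$, and then exhibit the $s\times s$ submatrix of the flattening (rows indexed by $u_{1,k},\dots,u_{s,k}$, columns by the $(r-1)$-tuples $(u_{i,j})_{j\ne k}$) as upper-triangular with nonzero diagonal, forcing $s\le\binom{n+d}{d}$.
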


\begin{proof}
Let $V=V(\mathcal{H})$, and let $f$ be the polynomial defining $\mathcal{H}$. Define the tensor $T:V^{r}\rightarrow \mathbb{F}$ 
such that for $X\in V^{r}$, $T(X):=f(X)$. For $k\in [r]$, let $M_{k}:V\times V^{r-1}\rightarrow \mathbb{F}$ be the matrix defined as $M_{k}(X(k),\hat{X}(k)):=T(X)$ for $X\in V^{r}$. Then by Lemma \ref{lemma:tensorrank}, we have
$\mbox{rank}(M_{k})=\frank_{k}(T)\leq\binom{n+d}{d}$. 

Let $s>\binom{n+d}{n}$, and suppose that $\mathcal{H}$ contains a copy of a member of $\mathcal{M}(r,s,k)$ for some $k\in [r]$. Then there exist $\mathbf{u}_{i,j}\in V$ for $(i,j)\in [s]\times [r]$ such that the following holds. For $(i,i')\in [s]\times [s]$, define $U_{i,i'}\in V^{r}$ such that $U_{i,i'}(j)=\mathbf{u}_{i,j}$ for $j\neq k$, and $U_{i,i'}(k)=\mathbf{u}_{i',k}$. Then $f(U_{i,i})\neq 0$ for $i\in [s]$ and $f(U_{i,i'})=0$ if $1\leq i< i'\leq s$. But then the submatrix of $M_{k}$ induced by the rows $\mathbf{u}_{1,k},\dots,\mathbf{u}_{s,k}$ and columns $\hat{U}_{1,1}(k),\dots,\hat{U}_{s,s}(k)$ is an upper-triangular matrix, which has full rank. Therefore, $\mbox{rank}(A)\geq s$, which is a contradiction. 
\end{proof}


\begin{lemma}\label{lemma:weak}
Let $\mathcal{H}_1,\dots,\mathcal{H}_m$ be $r$-uniform dihypergraphs on an $N$ element vertex set $V$. If $\mathcal{H}_{i}$ contains no member of $\mathcal{M}(r,s)$ for $i\in [m]$, then there exists $U\subset V$ such that $\mathcal{H}_{i}[U]$ is either a clique or an independent set for $i\in [m]$, and $|U|\geq cN^{1/2r^2ms}$, where $c=c(r,s,m)>0$ depends only on $r,s,m$.
\end{lemma}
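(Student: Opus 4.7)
The approach is to iteratively refine the vertex set, processing each pair $(i,k)\in[m]\times[r]$ once. Say that $U\subseteq V$ is \emph{$k$-homogeneous for $\mathcal{H}_i$} if, for every ordered $(r-1)$-tuple $\hat{v}(k)=(v_1,\ldots,v_{k-1},v_{k+1},\ldots,v_r)\in U^{r-1}$ of distinct vertices, either every admissible $v_k\in U$ completes $\hat{v}(k)$ to an edge of $\mathcal{H}_i$, or none does. A short induction on the coordinates $k\in[r]$ shows that if $U$ is $k$-homogeneous for $\mathcal{H}_i$ for every $k$, then the edge-indicator of $\mathcal{H}_i$ is constant on ordered $r$-tuples of distinct vertices from $U$, so $\mathcal{H}_i[U]$ is either a clique or an independent set (using the directed-clique convention from the previous subsection). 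The task therefore reduces to finding a common $U$ that is $k$-homogeneous for every $\mathcal{H}_i$ at every $k\in[r]$.

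The core ingredient is a single-step refinement lemma: if $\mathcal{H}$ is an $r$-uniform dihypergraph on $V'$ that avoids $\mathcal{M}(r,s,k)$, then some $U'\subseteq V'$ of size polynomial in $|V'|$ is $k$-homogeneous for $\mathcal{H}$. To prove this, I would encode the edge relation as a $0/1$ bipartite matrix with rows indexed by $V'$ (candidates for the $k$-th coordinate) and columns indexed by $(V')^{r-1}$ (the remaining coordinates). In this language, the $\mathcal{M}(r,s,k)$-avoidance is exactly the statement that the matrix contains no $s\times s$ upper-triangular submatrix with $1$s on the diagonal and $0$s strictly above. Such ``semi-order-free'' matrices admit a Dilworth- or Zarankiewicz-type decomposition into large subrectangles with constant columns, and extracting one such block precisely yields a $k$-homogeneous $U'$.

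With the refinement lemma in hand, I iterate it once per pair $(i,k)\in[m]\times[r]$, always restricting to the currently refined subset. Since $k$-homogeneity is inherited under restriction to subsets, each iteration preserves the homogeneities already secured by earlier iterations, and upgrades the current pair. After $mr$ rounds the resulting $U$ is simultaneously $k$-homogeneous for every $\mathcal{H}_i$ at every $k$, and hence $\mathcal{H}_i[U]$ is a clique or an independent set for every $i\in[m]$, finishing the proof.

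The main obstacle will be calibrating the per-step refinement so that the compounded bound after $mr$ iterations matches the claimed $|U|\geq cN^{1/(2r^2ms)}$. A naive iteration with per-step exponent $\Theta(1/s)$ compounds to $N^{(1/s)^{mr}}$, far too weak. What is needed is either a per-step bound of the form $|U'|\geq c(|V'|)^{1-O(1/(mr))}$ — so that only an additive loss accumulates in the exponent — or, more likely, a single ``one-shot'' Dilworth/Zarankiewicz argument that handles all $rm$ semi-order-free conditions simultaneously and produces the exponent $1/(2r^2ms)$ directly, without any compounding. Managing the ``distinct vertices'' constraint consistently through all the refinements (so that the pattern restricted to $U$ faithfully represents a genuine copy of $\mathcal{M}(r,s,k)$) is a further technical wrinkle that must be handled carefully.
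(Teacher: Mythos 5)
Your approach is genuinely different from the paper's, and it has a real gap that you yourself flag but do not close. The paper does \emph{not} aim for the strong property that each $\mathcal{H}_i[U]$ is ``$k$-homogeneous'' in your sense; instead it runs a single global iteration governed by a potential function. Concretely, it sets $\alpha\approx N^{-1/(2rms)}$ and maintains a nested sequence $U_0\supset U_1\supset\cdots$ together with counters $s_{\ell,i,k}$ (initially $s$), with the invariant that $\mathcal{H}_i[U_\ell]$ contains no member of $\mathcal{M}(r,s_{\ell,i,k},k)$. At each step, either every nonempty $[\mathcal{H}_i]$ restricted to $U_\ell$ has density $\geq 1-\alpha$, in which case \Cref{lemma:clique} applied to $\bigcap_{i\in I}[\mathcal{H}_i]$ produces the desired set, or some $[\mathcal{H}_i]$ is sparser, and a medium-degree argument (\Cref{claim:med_degree_dir}) produces a tuple $X$ and coordinate $k$ so that deleting $N_{[r]\setminus\{k\}}(X)$ shrinks $U_\ell$ by only a factor $\alpha/(2r\cdot r!)$ while strictly decreasing $s_{\ell,i,k}$. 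Since $\sum_{i,k}s_{\ell,i,k}$ starts at $rms$ and is bounded below by $rm$, there are at most $rm(s-1)$ shrinkage steps, and $|U_L|\geq(\alpha/2r\cdot r!)^{rms}N\geq N^{1/2}$. The polynomial final bound $cN^{1/(2r^2ms)}$ comes from the case where the process stops via \Cref{lemma:clique}.

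Your proposal has two unresolved problems. First, you acknowledge that a naive per-pair iteration yields $N^{(1/s)^{mr}}$, which is far too weak, and you only speculate on what would fix this; the paper's fix — a potential function bounding the total number of iterations by $rms$, combined with a $\Theta(N^{-1/(2rms)})$ multiplicative loss per step — is precisely the missing ingredient, and it is not at all a per-$(i,k)$ refinement. Second, your single-step refinement lemma is not established and is subtler than ``Dilworth/Zarankiewicz on a triangular-pattern-free matrix.'' To make $U'$ $k$-homogeneous you need every column indexed by $(U')^{r-1}$ to be constant on $U'$; this is a subrectangle whose column set is determined by its own row set, not an arbitrary $A\times B$ block, and triangular-pattern-freeness does not directly hand you such a self-referential block. (For comparison, the paper never needs anything this strong: it only removes one medium-sized neighborhood per step and tracks the decrease of $s_{\ell,i,k}$.) Finally, you should be aware that the paper's \Cref{claim:med_degree_dir} also handles the separate case where $[\mathcal{G}]$ is empty but the dihypergraph is not, which your homogenization scheme would need to address as well; it is not automatic.
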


\begin{proof}

We prepare the proof with the following claims.

\begin{claim}\label{claim:med_degree}
Let $\mathcal{G}$ be a nonempty $r$-uniform hypergraph on $M\geq 100r$ vertices, and suppose that the density of $\mathcal{G}$ is at most $1-\alpha$. Then there exists $X\in V(\mathcal{G})^{(r-1)}$ such that $1\leq |N(X)|\leq (1-\alpha/2r)M$.
\end{claim}

\begin{proof}
 Let $\alpha'$ be the unique real number such that $|E(\mathcal{G})|=\binom{(1-\alpha')M}{r} \leq (1-\alpha)\binom{M}{r}$, then one can check that $\alpha'\geq \alpha/2r$. We prove by induction on $r$ that $\mathcal{G}$ contains an $(r-1)$-element set $X$ such that $1\leq |N(X)|\leq (1-\alpha')M$. This is certainly true if $r=1$, so suppose that $r\geq 2$.

Let $U\subset V(\mathcal{G})$ be the set of vertices with at least 1 neighbour. Then there exists $u\in U$ such that 
$$|N(u)|\leq \frac{r|E(\mathcal{G})|}{|U|}.$$ Note that $\binom{|U|}{r}\geq |E(\mathcal{G})|$ as every edge is contained in $U$, so $|U|\geq (1-\alpha')M$. But then 
$$|N(u)|\leq \frac{r\binom{(1-\alpha')M}{r}}{(1-\alpha')M}=\binom{(1-\alpha')M-1}{r-1}<\binom{(1-\alpha')M}{r-1}.$$
Let $\mathcal{G}'$ be the link graph of $u$. Then $\mathcal{G}'$ is a nonempty $(r-1)$-uniform hypergraph such that $|E(\mathcal{G}')|\leq \binom{(1-\alpha')M}{r-1}$. Therefore, by our induction hypothesis, there exists an $(r-2)$-element set $X'$ such that $1\leq |N_{\mathcal{G}'}(X')|\leq (1-\alpha')M$. But then $X=X'\cup\{u\}$ satisfies  $1\leq |N_{\mathcal{G}}(X)|\leq (1-\alpha')M$ as well, finishing the proof.
\end{proof}

\begin{claim}\label{claim:med_degree_dir}
Let $\mathcal{G}$ be a nonempty $r$-uniform dihypergraph on $M\geq 100r$ vertices, and suppose that the density of $[\mathcal{G}]$ is at most $1-\alpha$. Then there exist $\ell\in [r]$ and $Y\in V(\mathcal{G})^{r-1}$ such that $$1\leq |N_{[r]\setminus\{\ell\}}(Y)|\leq \left(1-\frac{\alpha}{2r\cdot r!}\right)M.$$
\end{claim}

\begin{proof}
First, consider the case that $[\mathcal{G}]$ is nonempty. Then by the previous claim, there  exists $X\in V(\mathcal{G})^{(r-1)}$ such that $1\leq |N_{[\mathcal{G}]}(X)|\leq (1-\alpha/2r)M$. Let $U=V(\mathcal{G})\setminus N_{[\mathcal{G}]}(X)$, and note that $|U|\leq \frac{\alpha}{2r}M$. For every $u\in U$, the set $X\cup\{u\}$ has an orientation not present in $\mathcal{G}$. Hence, there exist $U'\subset U$, $\ell\in [r]$, and an orientation $Y$ of $X$ such that $|U'|\geq |U|/r!$, and $U'$ is disjoint from $N_{[r]\setminus\{\ell\}}(Y)$. But then  $|N_{[r]\setminus\{\ell\}}(Y)|\leq (1-\frac{\alpha}{2r\cdot r!})M$. Also, $ N_{[\mathcal{G}]}(Y)\subset N_{[r]\setminus\{\ell\}}(Y)$, so $|N_{[r]\setminus\{\ell\}}(Y)|\geq 1$ as well. 

Now consider the case when $[\mathcal{G}]$ is empty. Let $\beta=1/r!$. In this case we show that there exist $\ell\in [r]$ and $Y\in V(\mathcal{G})^{r-1}$ such that $1\leq |N_{[r]\setminus\{\ell\}}(Y)|\leq (1-\beta)M$. Suppose this is not the case, that is, for each $W\in E(\mathcal{G})$ and $\ell\in [r]$ we have $|N_{[r]\setminus \{\ell\}}(\hat{W}(\ell))|> (1-\beta)M$. We define the sets of edges $F_0,\dots,F_r$ of $\mathcal{G}$ as follows. Let $F_0$ contain a single edge $W_0$ of $\mathcal{G}$. For $i\in [r]$, $F_i$ will have the following form. There exist $x_1,\dots,x_i\in V(\mathcal{G})$ such that $F_i$ contains all $r$-tuples $W$ for which $W$ agrees with $W_0$ in exactly $(r-i)$ coordinates, and the rest of the coordinates are a permutation of $x_1,\dots,x_i$. Note that then $|F_i|=r(r-1)\dots(r-i+1)$. If $F_i$ is already defined satisfying this property, we define $F_{i+1}$ as follows. For each $W\in F_i$, let $I_W$ be the set of coordinates $\ell\in [r]$ for which $W(\ell)=W_0(\ell)$. Consider the set $$U=\bigcap_{W\in F_i}\bigcap_{\ell\in I_W} N_{[r]\setminus \{\ell\}}(\hat{W}(\ell)).$$
Then $U$ is the intersection of $(r-i)|F_i|\leq r!$ subsets of $V(\mathcal{G})$ of size at least $(1-\beta)M$, hence, $U$ is nonempty. Therefore, there exists $x_{i+1}\in U$. Let $F_{i+1}$ be the set of all edges $W'$ for which $\hat{W}'(\ell)=\hat{W}(\ell)$ and $W'(\ell)=x_{i+1}$ for some $W\in F_i$ and $\ell\in I_W$. Then $F_{i+1}$ has the desired properties. In particular, $F_{r}$ contains every orientation of $\{x_1,\dots,x_r\}$ contradicting that $[\mathcal{G}]$ is nonempty. This finishes the proof.

Let us illustrate the previous argument in case $r=3$. Here, $F_0$ contains some edge $abc$. Then, we can find $x=x_1$ such that $abx,axc,xbc$ are all edges, they form $F_1$. Then, we can find some $y=x_2$ such that $ayx,ybx,axy,yxc,xby,xyc$ are all edges, they form $F_2$. Finally, we can find some $z=x_3$ such that $zyx,yzx,zxy,yxz,xzy,xyz$ are all edges, forming $F_3$.
\end{proof}

Let $\alpha=2r\cdot r! N^{-1/2rms}$. Let $U_{0}=V(\mathcal{H})$, $s_{0,i,k}=s$ for $(i,k)\in [m]\times [r]$, and define the nested sequence of vertex sets $U_{0}\supset U_{1}\supset\dots$ and sequence of integers $s_{\ell,i,k}$ for $\ell=1,\dots$ and $(i,k)\in [m]\times[r]$ as follows. Suppose that $U_{\ell}$ and $s_{\ell,i,k}$ are already defined for some $\ell\geq 0$ such that $\mathcal{H}_{i}[U_{\ell}]$ contains no member of $\mathcal{M}(r,s_{\ell,i,k},k)$ for $(i,k)\in [m]\times [r]$.
Let $I\subset [m]$ be the set of indices $i$ such that $\mathcal{H}_{i}[U_{\ell}]$ is nonempty. For such indices $i$, we must have $s_{\ell,i}>1$, as the family  $\mathcal{M}(r,1)$ is composed of a single dihypergraph containing a single edge. Consider three cases.

\begin{description}
  \item[Case 1.] $I=\emptyset$. In this case we stop and set $U=U_{\ell}$.
    
    \item[Case 2.] For every $i\in I$, the density of $[\mathcal{H}_{i}][U_\ell]$ is at least  $1-\alpha$. Let $\mathcal{H}_{I}=\bigcap_{i\in I} [\mathcal{H}_{i}]$, then $d(\mathcal{H}_{I}[U_{\ell}])\geq 1-\alpha|I|\geq 1-\alpha m$. Therefore, by Lemma \ref{lemma:clique}, $\mathcal{H}_{I}[U_{\ell}]$ contains a clique $U$ of size at least $\frac{1}{4}(1/\alpha m)^{1/(r-1)}$. Note that $\mathcal{H}_{i}[U]$ is a clique for every $i\in I$, and $\mathcal{H}_{i}[U]$ is an independent set for every $i\in [m]\setminus I$.
    
    \item[Case 3.] There exists $i\in I$ such that the density of  $[\mathcal{H}_{i}][U_{\ell}]$ is less than $1-\alpha$. As $\mathcal{H}_{i}[U_{\ell}]$ is not empty, we get by Claim \ref{claim:med_degree_dir} that there exist $X\subset U_{\ell}^{r-1}$ and $k\in [r]$ such that 
    $$1\leq |N_{\mathcal{H}_{i}[U_{\ell}],[r]\setminus\{k\}}(X)|\leq \left(1-\frac{\alpha}{2r\cdot r!}\right)|U_{\ell}|.$$
    Let $U_{\ell+1}=U_{\ell}\setminus N_{\mathcal{H}_{i}[U_{\ell}],[r]\setminus\{k\}}(X)$. Also, set $s_{\ell,i',k'}=s_{\ell+1,i',k'}$ if $(i',k')\neq (i,k)$, and let $s_{\ell+1,i,k}=s_{\ell,i,k}-1$. Note that $|U_{\ell+1}|\geq \frac{\alpha}{2r\cdot r!}|U_{\ell}|$, and $\mathcal{H}_{j}[U_{\ell}]$ contains no member of $\mathcal{M}(r,s_{\ell+1,i',k'},k)$ for $(i',k')\in [m]\times [r]$. The latter is clear if $(i',k')\neq (i,k)$. If $(i',k')=(i,k)$, let $f$ be any edge in $\mathcal{H}_{i}[U_{\ell}]$ with $\hat{f}(k)=X$. If $\mathcal{H}_{i}[U_{\ell+1}]$ contains a member of $\mathcal{M}(r,s_{\ell+1,i,k},k)$, then together with $f$, this forms a member of $\mathcal{M}(r,s_{\ell,i,k},k)$ in $\mathcal{H}_{i}[U_{\ell}]$, contradiction.
\end{description}

Let $L$ be the index $\ell$ for which we stop. For $\ell=1,\dots,L$, let $s_{\ell}=\sum_{i=1}^{m}\sum_{k=1}^{r}s_{\ell,i,k}$. Then $s_{0}=rms$, $s_{L}\geq rm$, and $s_{\ell+1}=s_{\ell}-1$, so we get $L\leq rm(s-1)$. Also, $|U_{\ell+1}|\geq \frac{\alpha}{2r}|U_{\ell}|$, so $|U_{L}|\geq (\alpha/2r\cdot r!)^{L}N\geq (\alpha/2r\cdot r!)^{rms}N\geq N^{1/2}$.

If we stop in Case 1., then $U=U_{\ell}$ and so $|U|\geq N^{1/2}$. If we stop in Case 2., then $|U|\geq \frac{1}{4}(1/\alpha m)^{1/(r-1)}\geq c N^{1/2msr(r-1)}$, where $c=c(r,s,m)>0$ depends only on $r,s,m$. This finishes the proof.
\end{proof}

From this, we can immediately deduce Theorem \ref{thm:hypergraph}.

\begin{proof}[Proof of Theorem \ref{thm:hypergraph}]
Let $f_1,\dots,f_m:(\mathbb{F}^{n})^r$ be polynomials of degree at most $d$ and $\phi$ be a Boolean formula defining $\mathcal{H}$, and let $\mathcal{H}_i$ be the algebraic dihypergraph defined by $f_i$ on $V(\mathcal{H})$ for $i\in [m]$. By Lemma \ref{lemma:matching}, $\mathcal{H}_{i}$ contains no member of $\mathcal{M}(r,s)$ for $s=\binom{n+d}{n}+1$. But then by Lemma \ref{lemma:weak}, there exists $U\subset V(\mathcal{H})$ such that $|U|\geq cN^{1/2r^2ms}$ and $\mathcal{H}_{i}[U]$ is either a clique or an independent set for $i\in [m]$. 

If $X\in V(\mathcal{H})^{(r)}$ and $Y$ is an orientation of $X$, then $X$ is an edge of $\mathcal{H}$ if and only if 
$$\phi([Y\not\in E(\mathcal{H}_{i})]_{i\in [m]})=\mbox{true}.$$
By the definition of $U$, the left-hand side is the same for every $Y\in U^{r}$, so $\mathcal{H}[U]$ is either a clique or an independent set. 
\end{proof}

Let us remark that the bound in Lemma \ref{lemma:weak} is optimal in the following sense. Let $G$ be a graph, which we view as a digraph in which each edge is oriented both ways. Note that if  $G$ contains no two sets of size $\lfloor s/2\rfloor$ with no edges between them, then $G$ contains no member of $\mathcal{M}(2,s)$ as well. But standard probabilistic arguments show that there are graphs $G$ on $N$ vertices containing no two such sets, and every clique and independent set in $G$ is of size $N^{O(1/s)}$. In the next section, we show that this bound can be significantly improved if we exploit some other properties of algebraic graphs as well.

\subsection{Zero-patterns}
One of the key results in our proof is the following lemma of  R\'onyai, Babai and Ganapathy \cite{RBG01} on the number of zero-pattern of polynomials.

Given a sequence of $m$ polynomials $\mathbf{f}=(f_1,\dots,f_{m})$ in $n$ variables over the field $\mathbb{F}$, a \emph{zero-pattern} of $\mathbf{f}$ is a sequence $\epsilon\in \{0,*\}^{m}$ for which there exists $x\in\mathbb{F}^{n}$ such that $f_{i}(x)=0$ if and only if $\epsilon(i)=0$. Let $Z(\mathbf{f})$ denote the number of zero-patterns of $\mathbf{f}$. Using elegant algebraic tools, R\'onyai, Babai and Ganapathy \cite{RBG01} proved the following.

\begin{lemma}\label{lemma:zeropatterns}
Let $f_1,\dots,f_m:\mathbb{F}^{n}\rightarrow \mathbb{F}$ be a sequence of polynomials of degree at most $d$. Then the number of zero-patterns of $\mathbf{f}=(f_1,\dots,f_m)$ is at most $\binom{md+n}{n}$. In particular, $Z(\mathbf{f})\leq cm^{n}$, where $c=c(n,d)$ depends only on $n$ and $d$.
\end{lemma}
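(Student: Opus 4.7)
The plan is to adapt the classical linear-algebraic argument that bounds the number of sign/zero patterns of a family of polynomials by the dimension of a suitable polynomial space. Concretely, I would identify each realized zero-pattern with a polynomial of degree at most $md$, and then show that these polynomials are linearly independent inside the space of polynomials on $\mathbb{F}^n$ of degree at most $md$, whose dimension is $\binom{md+n}{n}$.

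Let $\epsilon_1, \ldots, \epsilon_Z$ be the list of distinct zero-patterns realized by $\mathbf{f}=(f_1,\dots,f_m)$, and for each $j$ pick a witness point $x_j \in \mathbb{F}^n$ with $f_i(x_j)=0$ iff $\epsilon_j(i)=0$. For a pattern $\epsilon$, write $S(\epsilon)=\{i:\epsilon(i)=*\}$ and define
\[
g_{\epsilon}(x) \;=\; \prod_{i \in S(\epsilon)} f_i(x),
\]
with the empty product equal to $1$. Each $g_{\epsilon}$ has degree at most $md$. The key step is to order the patterns so that the evaluation matrix $M_{j,k}:=g_{\epsilon_j}(x_k)$ is (upper-)triangular with nonzero diagonal; this will force $g_{\epsilon_1},\dots,g_{\epsilon_Z}$ to be linearly independent.

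I would order the patterns so that whenever $S(\epsilon_j)\subsetneq S(\epsilon_k)$, we have $j>k$ (a linear extension of reverse inclusion). On the diagonal, $g_{\epsilon_j}(x_j)\neq 0$ because every factor $f_i$ with $i\in S(\epsilon_j)$ is nonzero at $x_j$ by the choice of witness. For $j<k$, I claim $g_{\epsilon_j}(x_k)=0$: if some $i\in S(\epsilon_j)$ lies outside $S(\epsilon_k)$, then $f_i(x_k)=0$ and the product vanishes; otherwise $S(\epsilon_j)\subseteq S(\epsilon_k)$, and since the patterns are distinct the inclusion is strict, which by our ordering forces $j>k$, contradicting $j<k$.

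Consequently the $Z\times Z$ matrix $M$ is triangular with nonzero diagonal, so it has full rank $Z$, which means $g_{\epsilon_1},\dots,g_{\epsilon_Z}$ are linearly independent elements of the space of polynomials on $\mathbb{F}^n$ of degree at most $md$. Since that space has dimension $\binom{md+n}{n}$, we conclude $Z(\mathbf{f})\le \binom{md+n}{n}$. The "in particular" bound $Z(\mathbf{f})\le c(n,d)\,m^n$ then follows from the estimate $\binom{md+n}{n}\le \frac{(md+n)^n}{n!}$, which is $O_{n,d}(m^n)$.

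The only mild subtlety is producing a consistent ordering: one must verify that reverse inclusion of the sets $S(\epsilon_j)$ really extends to a linear order (which it does, since any partial order on a finite set has a linear extension), and then carefully check the two cases for off-diagonal vanishing. Everything else is essentially a dimension count, so I expect no serious obstacle.
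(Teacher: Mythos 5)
Your proof is correct and is, up to rephrasing, the original argument of R\'onyai, Babai and Ganapathy \cite{RBG01}, which the paper simply cites without reproof: pick a witness point for each realized pattern, form the products $g_\epsilon=\prod_{i\in S(\epsilon)}f_i$, and show these are linearly independent via a triangular evaluation matrix under a linear extension of reverse inclusion on the supports. The only cosmetic slip is that with your convention $M_{j,k}=g_{\epsilon_j}(x_k)$ and $M_{j,k}=0$ for $j<k$, the matrix is lower-triangular rather than upper-triangular, which of course does not affect the rank conclusion; the dimension count $\dim\le\binom{md+n}{n}$ and the estimate $\binom{md+n}{n}\le m^n(d+n)^n/n!$ for the ``in particular'' part are both fine.
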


\subsection{Weak-VC-dimension}

In this section, we show that algebraic graphs of complexity $(n,d,m)$ behave similarly as graphs of VC-dimension $n$. By the celebrated Sauer-Shelah lemma \cite{Sa72,Sh72}, if a family of sets $\mathcal{F}$ has VC-dimesnion $n$, then 
$$\pi_{\mathcal{F}}(z)\leq\sum_{i=0}^{n}\binom{z}{i}\leq cz^{n},$$
where $c=c(n)$ depends only on $n$. 

Let us relax the notion of VC-dimension as follows, and note that we now work with undirected hypergraphs. Say that the family $\mathcal{F}$ has \emph{weak-VC-dimension} $(c,n)$, if $\pi_{\mathcal{F}}(z)\leq cz^{n}$ for every positive integer $z$. Also, an $r$-uniform hypergraph $\mathcal{H}$ has \emph{weak-VC-dimension $(c,n)$} if the family $\{N(v):v\in V(\mathcal{H})\}\subset 2^{V(\mathcal{H})^{(r-1)}}$ has weak-VC-dimension $(c,n)$ (that is, $\{N(v):v\in V(\mathcal{H})\}$ is viewed as a family of subsets of the base set $\mathcal{V}=V(\mathcal{H})^{(r-1)}$). Also, say that a family of hypergraphs $\mathcal{F}$ has \emph{weak-VC-dimension $n$} if there exists a constant $c=c(\mathcal{F})$ such that every $\mathcal{H}\in\mathcal{F}$ has weak-VC-dimension $(c,n)$.

\begin{lemma}\label{lemma:weakvc}
Let $\mathcal{H}$ be an $r$-uniform algebraic hypergraph of complexity $(n,d,m)$, and let $\mathcal{F}=\{N(v):v\in V(\mathcal{H})\}$. Then 
$$\pi_{\mathcal{F}}(z)\leq \binom{zmd+n}{n}\leq cz^{n},$$
where $c=c(n,d,m)$ depends only on $n,d,m$.
\end{lemma}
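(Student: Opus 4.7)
The plan is to reduce the shatter-function bound to a counting-zero-patterns problem and then invoke \Cref{lemma:zeropatterns}. Fix any $U=\{X_1,\dots,X_z\}\subset V(\mathcal{H})^{(r-1)}$ of size $z$; my goal is to bound $|\mathcal{F}|_U|=|\{N(v)\cap U:v\in V(\mathcal{H})\}|$ by $\binom{zmd+n}{n}$. The key observation is that for a fixed $X_i$ and the $r$-tuple $(v,X_i)$ (in any chosen ordering — the symmetry built into the definition of algebraic hypergraph guarantees the choice is immaterial), the membership condition $X_i\in N(v)$ is encoded by
$$\phi\bigl([f_1(v,X_i)=0],\dots,[f_m(v,X_i)=0]\bigr)=\text{true}.$$
Hence $N(v)\cap U$ is completely determined by the Boolean vector $([f_j(v,X_i)=0])_{i\in[z],\,j\in[m]}$.

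Next, I would observe that upon fixing $X_i$, the map $v\mapsto f_j(v,X_i)$ is a polynomial $g_{i,j}:\mathbb{F}^n\to\mathbb{F}$ of degree at most $d$ in the $n$-dimensional variable $v$. This gives us a system $\mathbf{g}=(g_{i,j})_{i\in[z],\,j\in[m]}$ of $zm$ polynomials in $n$ variables of degree at most $d$. The number of distinct patterns $N(v)\cap U$ is at most the number of distinct zero-patterns of $\mathbf{g}$, since two vertices $v,v'$ producing the same zero-pattern must produce the same $N(\cdot)\cap U$. Applying \Cref{lemma:zeropatterns} to $\mathbf{g}$ yields
$$|\mathcal{F}|_U|\le Z(\mathbf{g})\le\binom{zmd+n}{n}\le c\,z^n,$$
where $c=c(n,d,m)$. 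Taking the maximum over all $U$ of size $z$ finishes the proof.

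There is essentially no serious obstacle: all the work is in unpacking definitions and noticing that the dependence on $v$ inside each $f_j(v,X_i)$ is of degree at most $d$ in $n$ variables. The one small subtlety that needs a sentence of justification is the symmetry issue — the polynomial $f_j$ takes an ordered $r$-tuple, so I must appeal to the invariance assumption in the definition of algebraic hypergraphs to say that fixing any ordering of $(v,X_i^{(1)},\dots,X_i^{(r-1)})$ gives the same zero-pattern information for edge-membership purposes. Once that is noted, the bound is immediate from \Cref{lemma:zeropatterns}.
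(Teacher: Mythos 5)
Your proof is correct and follows the same route as the paper's: fix a $z$-element subset of $V(\mathcal{H})^{(r-1)}$, observe that each $f_j$ specialised at a fixed (oriented) $(r-1)$-tuple becomes a degree-$\leq d$ polynomial in $n$ variables, and apply \Cref{lemma:zeropatterns} to the resulting system of $zm$ polynomials. The symmetry remark you make corresponds to the paper's choice of a fixed orientation $U'$ of each $(r-1)$-set.
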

\begin{proof}
Let $f_1,\dots,f_m:(\mathbb{F}^{n})^{r}\rightarrow \mathbb{F}$ be polynomials of degree at most $d$, whose zero-patterns determine the edges of $\mathcal{H}$. 

Let $\mathcal{V}=V(\mathcal{H})^{(r-1)}$, that is the base set of $\mathcal{F}$. If $\mathcal{U}\subset \mathcal{V}$ such that $|\mathcal{U}|=z$, then each element of the family $\{N(v)\cap \mathcal{U}:v\in V(\mathcal{H})\}$ is determined by a zero-pattern of the $zm$ polynomials $f_{i,U}:\mathbb{F}^{n}\rightarrow \mathbb{F}$ defined as $f_{i,U}(\mathbf{x})=f_{i}(U',\mathbf{x})$, where $i\in [m]$ and $U'\in (\mathbb{F}^{n})^{r-1}$ is a fixed orientation of the $(r-1)$-element set $U\in \mathcal{U}$. Therefore, by Lemma  \ref{lemma:zeropatterns}, we have $$\pi_{\mathcal{F}}(z)\leq \binom{zmd+n}{n}.$$
\end{proof}

Solving the inequality $\binom{zmd+n}{n}<2^z$, Lemma \ref{lemma:weakvc} shows that an algebraic graph of complexity $(n,d,m)$ has VC-dimension at most $n\log (ndm)$. Also, the VC-dimension of such graphs cannot be bounded by a function of $n$ alone: every graph is as an algebraic graph of complexity $(1,d,1)$ and $(1,1,m)$ for some positive integers $d$ and $m$. However, Lemma \ref{lemma:weakvc} also tells us that the family of algebraic graphs of complexity $(n,d,m)$ has weak-VC-dimension at most $n$.

Let $\delta>0$. Say that the family $\mathcal{F}$ of subsets of the base set $X$ is \emph{$\delta$-separated} if $|A\Delta B|\geq \delta|X|$ holds for any two distinct $A,B\in \mathcal{F}$. The following packing lemma is due to Haussler \cite{H95}. While it is originally stated for families of bounded VC-dimension, the same proof implies the following result as well.

\begin{lemma}\label{lemma:packing}
Let $n$ be a positive integer and $c>0$, then there exists $c'=c'(c,n)$ such that the following holds. Let $\mathcal{F}$ be a $\delta$-separated family of weak-VC-dimension $(c,n)$. Then $|\mathcal{F}|\leq c'(1/\delta)^{n}$.
\end{lemma}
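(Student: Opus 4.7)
The plan is to invoke Haussler's classical packing lemma \cite{H95}, verifying that its proof depends only on the upper bound on the shatter function and not on the full definition of VC-dimension. In the classical statement, Haussler considers a family of VC-dimension at most $n$ and applies the Sauer-Shelah lemma to obtain $\pi_{\mathcal{F}}(z) = O(z^n)$; the rest of the argument is purely probabilistic. Since our weak-VC-dimension hypothesis provides exactly the bound $\pi_{\mathcal{F}}(z) \leq cz^n$, the proof of \cite{H95} carries over verbatim, with the constant $c'$ now allowed to depend on $c$ in addition to $n$.

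For concreteness, here is the shape of the argument. Fix $t = \lceil K/\delta \rceil$ for a sufficiently large constant $K = K(c,n)$, and let $S$ be a uniformly random $t$-element subset of $X$. Two observations will be immediate: first, the weak-VC-dimension hypothesis yields $|\mathcal{F}\mid_S| \leq \pi_{\mathcal{F}}(t) \leq ct^n$; second, the $\delta$-separation hypothesis implies that for each pair $A \neq B$ in $\mathcal{F}$,
$$\Pr[A\mid_S = B\mid_S] \leq (1-\delta)^t \leq e^{-K}.$$
A naive union bound combining these two facts gives only $|\mathcal{F}| = O((\log(1/\delta)/\delta)^n)$, losing a logarithmic factor. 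Haussler's key refinement removes this factor through a probabilistic bootstrapping inequality that relates the expected size of the projection $\mathcal{F}\mid_S$ to that of the projection onto a slightly larger random sample; iterating this recursion against the shatter function bound then yields the desired packing inequality $|\mathcal{F}| \leq c'(1/\delta)^n$.

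The main (and only) nontrivial step will be Haussler's probabilistic bootstrapping argument, which I would reproduce directly from \cite{H95}. No modification is necessary in the weak-VC-dimension setting, because the sole combinatorial input this argument uses is the upper bound on the shatter function, and this bound is given by our hypothesis.
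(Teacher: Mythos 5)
Your proposal matches the paper's treatment exactly: the paper gives no proof of this lemma, merely citing Haussler \cite{H95} and remarking that his argument applies verbatim because it uses only the polynomial bound on the shatter function, not the VC-dimension per se. Your elaboration of why this is so (and of the naive union bound and where Haussler's bootstrapping removes the logarithmic loss) is accurate and consistent with that remark.
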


Next, we show that Lemma \ref{lemma:weakvc} and Lemma \ref{lemma:packing} can be combined to prove that if $G$ is a graph of bounded weak-VC-dimension, whose density is not too close to 1, then $G$ contains two large subsets with few edges between them. This lemma is inspired by Theorem 1.3 in the paper of Fox, Pach and Suk \cite{FPS19}, which is a regularity-type result for hypergraphs of bounded VC-dimension. Later, we will also make use of this theorem, see Lemma \ref{lemma:ultrastrong} for the precise statement.

\begin{lemma}\label{lemma:sparsesubgraph}
	Let $n$ be a positive integer and $c>0$, then there exists $c_{0}=c_{0}(c,n)>0$ such that the following holds. Let $\alpha,\beta>0$, and let $G$ be a graph of weak-VC-dimension $(c,n)$ on $N$ vertices such that $d(G)\leq 1-\alpha$. Then $G$ contains two disjoint sets $A$ and $B$ such that $|A|\geq \frac{\alpha}{4}N $, $|B|\geq c_0\alpha(\alpha\beta)^{n} N$, and every vertex in $B$ has at most $\beta |A|$ neighbors in $A$.
\end{lemma}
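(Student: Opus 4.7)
The plan is to apply Haussler's packing lemma (Lemma~\ref{lemma:packing}) to the family of neighborhoods, combined with a pigeonhole restricted to the set of high-non-degree vertices. The goal is to isolate a Haussler class $V^*$ whose vertices share nearly identical neighborhoods with a chosen ``base'' vertex $v$, and then extract both $A$ and $B$ from the structure around $v$.

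First, averaging on the non-degree sum $\sum_{u\in V}(N-1-d(u)) \geq \alpha N(N-1)$ together with a Markov-type estimate produces a set $T = \{v \in V : |\bar{N}(v)| \geq \alpha(N-1)/2\}$ with $|T|\geq \alpha N/2$. Set $\delta := \alpha\beta/C$ for a sufficiently large constant $C = C(c,n)$, and apply Lemma~\ref{lemma:packing} to $\mathcal{F} := \{N(u) : u\in V\}$ on base set $V$; by Lemma~\ref{lemma:weakvc}, $\mathcal{F}$ has weak-VC-dimension $(c,n)$, so one obtains a maximal $\delta$-separated subfamily $\mathcal{F}'$ with $|\mathcal{F}'|\leq c'(C/\alpha\beta)^n$. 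Assigning each $u \in V$ to a class $V_F$ according to a closest representative $F \in \mathcal{F}'$ partitions $V$, with the crucial property that $|N(u)\Delta N(u')| \leq 2\delta N$ for any $u, u' \in V_F$. Pigeonholing $T$ over these classes yields a class $V^*$ with $|V^* \cap T| \geq |T|/|\mathcal{F}'| \geq c_1 \alpha(\alpha\beta)^n N$ for some $c_1 = c_1(c,n)>0$.

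Now pick $v \in V^* \cap T$ maximizing $|V^* \cap N(v)|$, and let $A := \bar N(v)$, so $|A|\geq \alpha N/2 \geq \alpha N/4$. For every $u \in V^*$ one has
\[
|N(u)\cap A| \leq |N(u)\Delta N(v)| \leq 2\delta N \leq \beta|A|,
\]
provided $C \geq 4$ (using $|A|\geq \alpha N/2$). If $|V^* \cap N(v)| \geq (c_1/2)\alpha(\alpha\beta)^n N$, the set $B := V^* \cap N(v)$ is disjoint from $A$ and has the required size, completing the construction. The main obstacle is the remaining case: when no $v\in V^*\cap T$ has sufficiently many $V^*$-neighbors, every $v'\in V^*\cap T$ must satisfy $|V^*\cap \bar N(v')| > |V^*| - (c_1/2)\alpha(\alpha\beta)^n N$, which forces $G[V^* \cap T]$ to have very high non-edge density. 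One then swaps roles, letting $A$ be an appropriate large subset of $V^*\cap \bar N(v')$ for a well-chosen $v'$ (whose existence follows from the high density of non-edges inside $V^*$) and picking $B$ from $V^*\setminus A$, still exploiting the same Haussler-closeness control on $|N(u)\cap A|$. Tuning the constants $c_0, c_1, C$ relative to $c,n$ ensures that the disjointness and the requirement $|A|\geq \alpha N/4$ both survive in this degenerate regime, and yields the claimed bound.
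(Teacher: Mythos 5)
Your proposal starts the same way as the paper's proof: isolate the low-degree vertices (your $T$, the paper's $U$), apply Haussler's packing lemma with $\delta \approx \alpha\beta$ to obtain a small $\delta$-separated net, and pigeonhole the low-degree vertices over the resulting near-equivalence classes to find a large class $V^*$ of vertices with pairwise $\Delta$-close neighborhoods. Up to here everything is sound.

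The gap is in how you try to extract $A$ and $B$. You take $A := \bar N(v)$ for some $v\in V^*\cap T$ and then need $B \subset V^*\cap N(v)$. But there is no reason for $V^*\cap N(v)$ to be of the required size — indeed $v$ is a \emph{low}-degree vertex, so $N(v)$ is small, and there is no control over how much of $V^*$ happens to fall inside $N(v)$. Your ``swapping roles'' attempt does not repair this: in that regime you propose to take $A \subset V^*\cap\bar N(v')$, hence $|A|\leq |V^*|$. The pigeonhole only guarantees $|V^*|\geq |V^*\cap T|\geq c_1\alpha(\alpha\beta)^n N$, which for small $\beta$ is far smaller than the required $\frac{\alpha}{4}N$. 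So the claim that ``tuning the constants'' makes $|A|\geq\alpha N/4$ survive in this degenerate case is not justified, and in fact this case is the typical one, not an edge case.

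The fix, which is what the paper does, is to assign the two sets the other way around. Take $B$ to be (a trimmed-down copy of) the large Haussler class around some base vertex $z_0\in U$ — i.e.\ all $v$ with $|N(v)\Delta N(z_0)|\leq\gamma N$ — and set $A := V(G)\setminus (B\cup N(z_0))$. Then $|A|\geq N-|B|-|N(z_0)|\geq\frac{\alpha}{4}N$ follows because $|B|$ can be shrunk to at most $\frac{\alpha}{4}N$ and $z_0$ is low-degree, while for every $v\in B$, $N(v)\cap A\subset N(v)\setminus N(z_0)\subset N(v)\Delta N(z_0)$, which has size at most $\gamma N\leq\beta|A|$. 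In short: $B$ should be the pigeonhole class itself, and $A$ should be carved out of the complement of the base vertex's neighborhood, rather than trying to place $B$ inside that neighborhood.
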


\begin{proof}
	Let $U\subset V(G)$ be the set of vertices $v$ such that $|N(v)|\leq (1-\alpha/2)N$. Then $$(1-\alpha)\binom{N}{2}\geq |E(G)|\geq \frac{(N-|U|)}{2}\left(1-\frac{\alpha}{2}\right)N,$$
	from which we get 
	$$|U|> \frac{\alpha}{2} N.$$
	
	Let $\gamma=\alpha\beta/4$, and let $Z\subset U$ be maximal such that the family $\{N(v):v\in Z\}$ is $\gamma$ separated. Then by Lemma \ref{lemma:packing}, we have 
	$|Z|\leq c'(1/\gamma)^{n}$, where $c'=c'(c,n)$ depends only on the parameters $c$ and $n$. By the maximality of $Z$, for every $v\in U$ there exists $z\in Z$ such that $|N(v)\Delta N(z)|\leq \gamma N$. Hence, we can find $z_{0}\in Z$ such that the set $B$ formed by the vertices  $v$ such that $|N(v)\Delta N(z_0)|\leq \gamma N$ satisfies
	$$|B|\geq \frac{|U|}{|Z|}\geq \frac{\alpha\gamma^{n}}{2c'}N.$$ 
	By deleting some elements of $B$, we can assume that $|B|\leq \frac{\alpha}{4}$, and that $|B|$ still satisfies the previous inequality. Set $A=V(G)\setminus (B\cup N(z_{0}))$, then we show that $A$ and $B$ suffices.
	
	Indeed, $$|A|\geq N-|B|-|N(z_{0})|\geq N-\frac{\alpha}{4}N-\left(1-\frac{\alpha}{2}\right)N\geq \frac{\alpha}{4}N.$$
	Also, every $v\in B$ satisfies $|N(v)\Delta N(z_{0})|\leq \gamma N$, and $z_{0}$ has no neighbor in $A$, so $|N(v)\cap A|\leq \gamma N\leq \beta |A|$.
\end{proof}

\section{A Ramsey-type result for algebraic graphs}\label{sect:graphs}

In this section, we prove Theorems \ref{thm:main} and \ref{thm:multicolor}. If $G$ is a directed graph and $(A,B)$ is a pair of disjoint subsets of $V(G)$, say that $(A,B)$ is \emph{well-directed} if either every edge of $G$ between $A$ and $B$ goes from $A$ to $B$, or every edge between $A$ and $B$ goes from $B$ to $A$. We show that if $G$ is an algebraic digraph defined by a single polynomial and $A$ and $B$ are two sets of vertices such that $[G]$ has few edges between $A$ and $B$, then  we can find large subset $A'\subset A$ and $B'\subset B$ such that $(A',B')$ is well-directed. This is the consequence of the fact that $G$ contains no member of $\mathcal{M}(2,s)$ for some constant~$s$. For simplicity, write $\mathcal{M}^{-}(s)$ and $\mathcal{M}^{+}(s)$ instead of $\mathcal{M}(2,s,1)$ and $\mathcal{M}(2,s,2)$, respectively. Note that each member of $\mathcal{M}^{-}(s)$ (and $\mathcal{M}^{+}(s)$) is composed of $s$  directed edges $(u_1,v_1),\dots,(u_s,v_s)$ such that $(u_{i},v_j)$ is not an edge for $i>j$ ($j>i$, respectively).

\begin{lemma}\label{lemma:sparsesubset}
Let $n,d$ be positive integers, then there exists $c_{1}=c_{1}(n,d)>0$ and $c_2=c_2(n,d)>0$ such that the following holds. Let $G$ be an algebraic digraph of complexity $(n,d,1)$ on $N$ vertices such that $d([G])\leq 1-\alpha$. Then $V(G)$ contains two disjoint subsets $A$ and $B$ such that $|A|\geq c_1\alpha N$, $|B|\geq c_2 \alpha^{n+1}N$, and $(A,B)$ is well-directed. 
\end{lemma}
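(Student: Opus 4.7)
The plan is to first apply Lemma \ref{lemma:sparsesubgraph} to the graph $[G]$ in order to obtain disjoint subsets of the target sizes with few ``both-direction'' edges between them, and then to refine these subsets using the algebraic structure of $G$ (via Lemma \ref{lemma:matching}) so that one of the two directions is eliminated entirely.

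Observe first that $[G]$ is strongly-algebraic of complexity $(n,2d)$, being defined by the polynomial $(u,v) \mapsto f(u,v) f(v,u)$, and so by Lemma \ref{lemma:weakvc} it has weak-VC-dimension $(c_0, n)$. Applying Lemma \ref{lemma:sparsesubgraph} to $[G]$ with a small constant $\beta = \beta(n,d)$ (to be fixed at the end) yields disjoint subsets $A_0, B_0 \subseteq V(G)$ with $|A_0| \ge \alpha N / 4$, $|B_0| \ge c' \alpha^{n+1} \beta^n N$, and such that every $b \in B_0$ has at most $\beta |A_0|$ $[G]$-neighbors in $A_0$, i.e., at most $\beta |A_0|$ of the $a \in A_0$ satisfy both $(a,b), (b,a) \in E(G)$. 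For each such $b$ let $T^+(b) := N^+(b) \cap A_0 = \{a \in A_0 : (b,a) \in E(G)\}$ be the ``obstruction'' set to $A \to B$ well-directedness at $b$. A direction-majority pigeonhole passes to a subset $B_0' \subseteq B_0$ of size at least $|B_0|/2$ on which, without loss of generality, $|T^+(b)| \le (1+\beta)|A_0|/2$ for every $b$.

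Next, the family $\{T^+(b) : b \in B_0'\}$ of subsets of $A_0$ has weak-VC-dimension $(c_1, n)$ by Lemma \ref{lemma:weakvc}, so a further application of Lemma \ref{lemma:packing} at a suitable scale $\delta = \delta(\beta) > 0$, combined with pigeonhole, yields a prototype $b_0 \in B_0'$ and a cluster $B_1 \subseteq B_0'$ of size $|B_1| \ge c'' \alpha^{n+1} \beta^{O(n)} N$ such that $|T^+(b) \triangle T^+(b_0)| \le \delta |A_0|$ for every $b \in B_1$. Setting $A_1 := A_0 \setminus T^+(b_0)$ gives $|A_1| \ge (1-\beta)|A_0|/2 \ge \alpha N /16$ and, for every $b \in B_1$, the bound $|T^+(b) \cap A_1| \le \delta |A_0|$. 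So $(A_1, B_1)$ already has the target sizes, and only the residual $O(\delta |A_0|)$ ``wrong-direction'' edges per $b$ obstruct well-directedness.

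The final step is to eliminate these residual wrong-direction edges, and this is where Lemma \ref{lemma:matching} enters: the bipartite sub-digraph of $G$ on $A_1 \cup B_1$ formed by the wrong-direction edges (those $b \to a$ with $a \in T^+(b) \cap A_1$) contains no member of $\mathcal{M}^{\pm}(s)$ for $s = \binom{n+d}{n} + 1$, since it is a sub-digraph of $G$. I plan to exploit this staircase-free property via a bipartite analog of Lemma \ref{lemma:weak} applied to the residual digraph, or by iterating the direction-majority/packing arguments above a bounded (depending on $n,d$) number of times, each iteration shrinking the residual density by a constant factor while preserving linear sizes. Choosing $\beta = \beta(n,d)$ as a small enough constant throughout, the resulting refinement $(A, B) \subseteq (A_1, B_1)$ will satisfy $|A| \ge c_1 \alpha N$, $|B| \ge c_2 \alpha^{n+1} N$, and be entirely free of wrong-direction edges, hence well-directed. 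The hard part is precisely this last step: converting ``few wrong-direction edges per $b$'' into ``zero wrong-direction edges'' while losing only constant factors on both sides. This would fail in a purely combinatorial setting (e.g., for graphs of bounded VC-dimension without algebraic structure), and the forbidden-staircase property of Lemma \ref{lemma:matching} is essential to the argument.
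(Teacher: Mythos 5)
Your overall plan is on the right track: apply Lemma \ref{lemma:sparsesubgraph} to $[G]$ (noting that $[G]$ is strongly-algebraic of complexity $(n,2d)$), then refine using the forbidden-staircase constraint from Lemma \ref{lemma:matching}. However, the proof is incomplete precisely at the step you flag as ``the hard part,'' and your extra second-packing/prototype step is unnecessary and does not resolve the core difficulty.

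The paper's proof after Lemma \ref{lemma:sparsesubgraph} is more direct and never applies packing a second time. It iterates only on $A$: starting from $A_0$, at step $i$ either for every $b\in B_0$ one of $N^{+}(b)\cap A_{i-1}$, $N^{-}(b)\cap A_{i-1}$ is empty (then a majority argument gives $B\subset B_0$ with $|B|\ge |B_0|/2$ and a fixed orientation against $A_{i-1}$, done), or there is a $b\in B_0$ with both one-sided neighborhoods nonempty in $A_{i-1}$. The key observation is that $N^{+}(b)\cap N^{-}(b)\cap A_{i-1}$ consists exactly of $[G]$-neighbors of $b$ in $A_0$, so it has size at most $\beta|A_0| < \frac{1}{6}|A_{i-1}|$ by the conclusion of Lemma \ref{lemma:sparsesubgraph}. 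Consequently one of $N^{\pm}(b)\cap A_{i-1}$ has size at most $\frac{2}{3}|A_{i-1}|$; remove it, keep $|A_i|\ge\frac{1}{3}|A_{i-1}|$, and record the corresponding directed edge. After more than $2s$ steps, at least $s$ of the recorded edges go in the same direction, and by construction those edges form a member of $\mathcal{M}^{\pm}(s)$, contradicting Lemma \ref{lemma:matching}. Hence the iteration stops within $2s$ steps, losing only a $3^{-2s}=\Omega_{n,d}(1)$ factor of $|A_0|$.

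Your proposed termination mechanism, ``iterating \dots each iteration shrinking the residual density by a constant factor while preserving linear sizes,'' cannot by itself reach zero residuals: a constant multiplicative decrease of a positive quantity never hits zero in boundedly many steps. The only thing that can bound the number of removal steps is the staircase constraint, which you correctly identify as essential but never actually deploy. Once you do deploy it --- recording an edge at each removal and verifying the $\mathcal{M}^{\pm}$ pattern --- the prototype/cluster step becomes superfluous: for any $b$ with both directions present, the smallness of $N^{+}(b)\cap N^{-}(b)\cap A_{i-1}$ already lets you delete one full direction-neighborhood at the cost of at most a factor $3$ in $|A|$, with no clustering required. Also note that the purely bipartite framing in your last step is not quite what you want; the staircase in Lemma \ref{lemma:matching} lives in $G$ restricted to $B_0\times A_0$, and one must check that the recorded edges and the pruned sets $A_i$ actually produce the forbidden pattern, which the paper's construction does transparently.
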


\begin{proof}
Let $s=\binom{n+2d}{n}+1$ and $\beta=\frac{1}{6\cdot 3^{2s}}$. Note that $[G]$ is a strongly-algebraic graph of complexity $(n,2d)$. Indeed, if $G$ is defined by the polynomial $f:(\mathbb{F}^{n})^{2}\rightarrow \mathbb{F}$ of degree at most $d$, then the polynomial $f'(\mathbf{x},\mathbf{y})=f(\mathbf{x},\mathbf{y})\cdot f(\mathbf{y},\mathbf{x})$ defines $[G]$ and has degree at most $2d$. Therefore, by Lemma \ref{lemma:weakvc}, $[G]$ has weak-VC-dimension $(c,n)$ for some $c=c(n,d)$. Hence, by Lemma \ref{lemma:sparsesubgraph}, there exist two subsets $A_0$ and $B_0$ of $V(G)$ such that $|A_0|\geq \frac{\alpha}{4}N$, $|B_0|\geq c_0\alpha (\alpha\beta)^{n}N=c_2' \alpha^{n+1}N$ (where $c_2'=c_2'(n,d)$ depends only on $n$ and $d$), and in $[G]$, every vertex in $B_0$ has at most $\beta |A_0|$ neighbours in  $A_0$. We show that $c_1=\frac{1}{4\cdot 3^{2s}}$ and $c_2=c_2'/2$ suffices.

We define the sequence of sets $A_0\supset A_1\supset\dots$ and sequence of edges $f_1,f_2,\dots\in E(G)$ as follows. For $i=1,2,\dots$,
\begin{itemize}
    \item  $|A_i|\geq |A_0|3^{-i}$,
    \item  $f_i=(u_i,v_i)$ has one vertex in $A_{i-1}$ and one vertex in $B$
    \item  if $v_i\in B_0$, then $v_i$ has no in-neighbor in $A_{i}$, and if $u_i\in B_0$, then $u_i$ has no out-neighbor in $A_{i}$.
\end{itemize}
 Suppose that $A_{i-1}$ is already defined for some $1\leq i<2s$. Consider two cases. First, consider the case that for every $b\in B_0$, either $N^{+}(b)\cap A_{i-1}=\emptyset$ or $N^{-}(b)\cap A_{i-1}=\emptyset$. Then there exists $B\subset B_0$ such that $|B|\geq |B_0|/2$ and either every $b\in B$ satisfies $N^{+}(b)\cap A_{i-1}=\emptyset$, or every $b\in B$ satisfies $N^{-}(b)\cap A_{i-1}=\emptyset$. Set $A=A_{i-1}$, then $(A,B)$ is well-directed. As $|A_{i-1}|> |A_0|3^{-2s}\geq c_1 \alpha N$, and $|B|\geq c_2 N$, the sets $A$ and $B$ satisfy our conditions. In this case, we stop.
 
 Now consider the case when there exists $b\in B_0$ such that both $N^{+}(b)\cap A_{i-1}$ and $N^{-}(b)\cap A_{i-1}$ are nonempty. If $a\in N^{+}(b)\cap N^{-}(b)\cap A_{i-1}$, then $\{a,b\}\in E([G])$, so $|N^{+}(b)\cap N^{-}(b)\cap A_{i-1}|\leq \beta|A_0|<\frac{1}{6}|A_{i-1}|$. But then, either $|N^{+}(b)\cap A_{i-1}|\leq \frac{2}{3}|A_{i-1}|$, or $|N^{-}(b)\cap A_{i-1}|\leq \frac{2}{3}|A_{i-1}|$. In the first case, set $A_i=A_{i-1}\setminus N^{+}(b)$ and $f_{i}=(b,a)$ for some $a\in N^{+}(b)$, in the second case $A_i=A_{i-1}\setminus N^{-}(b)$ and $f_{i}=(a,b)$ for some $a\in N^{-}(b)$. Then $A_i$ and $f_i$ satisfy the desired properties.

Note that we must have stopped for some $i\leq 2s$. Indeed, either at least half of the edges $f_1,\dots,f_i$ goes from $A_0$ to $B_0$, or at least half of the edges goes from $B_0$ to $A_0$. In the first case, $G$ contains a member of $\mathcal{M}^{-}(\lceil i/2\rceil)$, in the second case $G$ contains a member of $\mathcal{M}^{+}(\lceil i/2\rceil)$. 
\end{proof}

The following theorem will immediately imply Theorem \ref{thm:multicolor}, which in turn implies Theorem \ref{thm:main}.

\begin{theorem}\label{thm:central}
There exists $c>0$ such that the following holds. Let $n,d,m,N$ be positive integers, $\mathbb{F}$ be a field and $0<\beta<1$. Let $f_1,\dots,f_m: (\mathbb{F}^{n})^{2}\rightarrow \mathbb{F}$ be polynomials of degree at most $d$, and let $V\subset \mathbb{F}^{n}$  such that $|V|=N$. For $I\subset [m]$, let $G_{I}$ be the digraph defined on  $V$ such that $(\mathbf{x},\mathbf{y})$ is an edge if $f_{i}(\mathbf{x},\mathbf{y})\neq 0$ for $i\in I$, and $f_{i}(\mathbf{x},\mathbf{y})= 0$ for $i\not\in I$. If $N$ is sufficiently large, then either $G_{\emptyset}$ contains a clique of size at least $N^{1-\beta}$, or there exists $I\subset [m]$, $I\neq \emptyset$ such that $G_{I}$ contains a clique of size at least $\frac{1}{4m}N^{\beta/\gamma}$, where
 $$\gamma=c n m\min\left\{d,\frac{n\log d}{\log n}\right\}.$$
\end{theorem}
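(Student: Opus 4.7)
The plan is an induction on the number of polynomials $m$, processing one polynomial at a time via the dichotomy of Lemma~\ref{lemma:sparsesubset}. I would set the density threshold $\alpha \approx N^{-\beta/\gamma}$ so that the dense case of Lemma~\ref{lemma:sparsesubset} (combined with Lemma~\ref{lemma:clique}) produces a clique of size $\Omega(1/\alpha)$ matching the target $\frac{1}{4m} N^{\beta/\gamma}$ up to constants.

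For the inductive step, consider $f_m$ restricted to the current vertex set $V'$, and apply Lemma~\ref{lemma:sparsesubset} to the digraph $H_{f_m}$ (which, as a strongly-algebraic digraph of complexity $(n,d,1)$, satisfies the hypotheses). In the \emph{dense regime}, where $[H_{f_m}]$ has density at least $1-\alpha$ on $V'$, we obtain a clique $U\subseteq V'$ of size $\Omega(1/\alpha)$ in $[H_{f_m}]$ on which $f_m$ is non-vanishing in both directions; we then apply the induction hypothesis to $f_1,\dots,f_{m-1}$ on $U$ and add $m$ to the signature $I$ returned. In the \emph{sparse regime}, we obtain a well-directed pair $(A,B)$ with $|A| = \Omega(\alpha|V'|)$ and $|B| = \Omega(\alpha^{n+1}|V'|)$, say with $f_m(\mathbf{b},\mathbf{a}) = 0$ for all $\mathbf{a}\in A, \mathbf{b}\in B$.

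Handling the sparse regime is the main technical step, since well-directedness only yields one-sided vanishing of $f_m$. The plan is to iterate Lemma~\ref{lemma:sparsesubset} inside $A$, with each iteration either hitting the dense regime or yielding a finer well-directed pair together with a ``witness edge'' of $H_{f_m}$. By Lemma~\ref{lemma:matching}, $H_{f_m}$ contains no copy of $\mathcal{M}(2,s)$ for $s = \binom{n+d}{d}+1$, so chained sparse iterations must, after boundedly many steps, either enter the dense regime or collapse to a subset on which $f_m$ vanishes in both directions; in the latter case we apply the induction to the remaining $m-1$ polynomials (leaving $m$ out of $I$). If the recursion never enters the dense regime for any polynomial, the final subset is a clique of $G_\emptyset$, which provides the first alternative of the theorem.

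For the size bookkeeping, each sparse iteration shrinks the vertex set by a factor of order $\alpha^{O(n)}$ (the $n$ coming from the weak-VC-dimension bound of Lemma~\ref{lemma:weakvc}, which feeds into Lemma~\ref{lemma:sparsesubset}). Summed across the $m$ polynomials and the iterations per polynomial, this shrinkage determines $\gamma$. A naive use of Lemma~\ref{lemma:matching} would give iteration depth $\binom{n+d}{d}$ per polynomial and hence $\gamma \sim cnm\binom{n+d}{d}$; the sharper $\gamma = cnm\min\{d, n\log d/\log n\}$ requires a refined iteration-depth bound via the flattening-rank structure (Lemma~\ref{lemma:tensorrank}): a direct combinatorial argument gives depth $O(d)$ when $d\le n$, and a dimension-counting doubling argument using that the polynomial space has dimension $\binom{n+d}{d}$ gives depth $O(n\log d/\log n)$ when $d>n$. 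Establishing this tighter iteration-depth bound is, I expect, the main obstacle; a secondary difficulty is verifying that the sparse branch, when it fails to enter the dense regime, really produces a fully-vanishing substructure (rather than merely one-sided vanishing), so that the recursion on fewer polynomials is well-defined and yields the large $G_\emptyset$ clique in the all-sparse branch.
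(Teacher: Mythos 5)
Your proposal correctly identifies the main ingredients of the paper's proof (the dichotomy of Lemma~\ref{lemma:sparsesubset}, the $\mathcal{M}^{+}(s)$-freeness from Lemma~\ref{lemma:matching}, the density threshold $\alpha\approx N^{-\beta/\gamma}$, and Lemma~\ref{lemma:clique} in the dense case), and you correctly flag where the difficulty lies: a naive iteration gives exponent $\sim nm\binom{n+d}{d}$, and the claimed $\min\{d, n\log d/\log n\}$ needs a finer iteration count. But the mechanism you guess at (separate arguments for $d\leq n$ and $d>n$, plus a ``dimension-counting doubling argument'') is not what the paper does, and you do not supply a workable alternative. The paper's actual key step in the sparse case is this: take the well-directed pair $(A,B)$ and split the $\mathcal{M}^{+}$-freeness parameter $s$ as $s=u+t$ with $t=\lfloor s/\sqrt{n}\rfloor$. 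If $G_i[A]$ contains a member of $\mathcal{M}^{+}(u)$ and $G_i[B]$ contains a member of $\mathcal{M}^{+}(t)$, then because all edges between $A$ and $B$ in $G_i$ go in one direction, concatenating the two patterns yields a member of $\mathcal{M}^{+}(u+t)=\mathcal{M}^{+}(s)$ in $G_i[U_\ell]$, a contradiction. So one must recurse into either $A$ with parameter $u$ (a ``small-jump'': vertex loss only a factor $\alpha$, but $s$ drops only by factor $1-1/\sqrt{n}$) or $B$ with parameter $t$ (a ``big-jump'': vertex loss $\alpha^{n+1}$, but $s$ drops by factor $\sqrt{n}$). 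Bounding the number of big-jumps by $O(\log s_0/\log n)$ and small-jumps by $O(\sqrt{n}\log s_0)$, and using $s_0=(\binom{n+d}{d}+1)^m\leq\min\{n^{dm},d^{nm}\}$, gives exactly $\gamma\sim nm\min\{d,n\log d/\log n\}$. Your proposal, which only ``iterates inside $A$'' and never exploits $B$, cannot reproduce this tradeoff.

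Two smaller points. First, your worry about one-sided versus two-sided vanishing of $f_m$ is moot in the paper's formulation: they track an $\mathcal{M}^{+}(s_{\ell,i})$-freeness counter $s_{\ell,i}$ per polynomial, and when $s_{\ell,i}$ reaches $1$ the digraph $G_i[U_\ell]$ has no directed edges at all, i.e., $f_i$ vanishes on all ordered pairs of distinct vertices; no separate ``collapse to full vanishing'' step is needed. Second, the paper does not recurse on the polynomials one at a time; it maintains all $m$ counters simultaneously, at each step applying Lemma~\ref{lemma:sparsesubset} to whichever $[G_i]$ is sparse, and only passes to Lemma~\ref{lemma:clique} (intersecting the dense $[G_i]$'s) when all remaining counters are simultaneously in the dense regime. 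Processing one polynomial fully before touching the next would still give a polynomial bound, but the counter-balancing argument and the accounting of big versus small jumps is much cleaner when all $s_{\ell,i}$ are tracked together.
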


\begin{proof}
For $i=1,\dots,m$, let $G_{i}$ be the algebraic digraph on $V$ defined by $f_{i}$. Then $G_{I}=(\bigcap_{i\in I}G_{i})\cap (\bigcap_{i\in [m]\setminus I}\overline{G}_{i})$. Here, $\overline{G}_i$ is the digraph in which every directed edge is included which is not present in~$G_i$.

Let $0<\alpha<1$, $s_{0,1}=\dots=s_{0,m}=\binom{n+d}{d}+1$, and $U_{0}=V$. In what comes, we define the nested sequence of sets $U_{0}\supset U_{1}\supset\dots$ and sequence of positive integers $s_{\ell,i}$ for $\ell=0,1,\dots$ and $i\in [m]$ such that $G_{i}[U_{\ell}]$ contains no member of $\mathcal{M}^{+}(s_{\ell,i})$. By Lemma \ref{lemma:matching}, this is satisfied for $\ell=0$. Suppose that $U_{\ell}$ and $s_{\ell,i}$ has been already defined for some $\ell\geq 0$. 

Let $I\subset [m]$ be the set of indices $i$ such that $s_{\ell,i}>1$. Consider three cases.
\begin{description}
    \item[Case 1.] $I=\emptyset$. In this case we stop and set $U=U_{\ell}$. Note that $G_{i}[U]$ is empty for $i=1,\dots,m$, so $G_{\emptyset}[U]$ is a clique.
    
    \item[Case 2.] For every $i\in I$, we have $d([G_{i}][U_\ell])\geq 1-\alpha$. Let $G^{*}_{I}=\bigcap_{i\in I} [G_{i}]$, then $d(G^{*}_{I}[U_{\ell}])\geq 1-\alpha|I|\geq 1-\alpha m$. Therefore, by Lemma \ref{lemma:clique}, $G^{*}_{I}[U_{\ell}]$ contains a clique $U$ of size at least $\frac{1}{4\alpha m}$. Note that $G_{i}[U]$ is a clique for every $i\in I$, and $G_{i}[U]$ is an independent set for every $i\in [m]\setminus I$, so $G_{I}[U]$ is a clique. In this case, we stop as well.
    
    \item[Case 3.] There exists $i\in I$ such that $d([G_{i}][U_{\ell}])< 1-\alpha$. Then by Lemma \ref{lemma:sparsesubset} there exist two disjoint sets $A$ and $B$ in $U_{\ell}$ such that $|A|\geq c_1\alpha|U_{\ell}|$, $|B|\geq c_2 \alpha^{n+1}|U_{\ell}|$, and  $(A,B)$ is well-directed in $G_i$. Let $t=\lfloor s_{\ell,i}/n^{1/2}\rfloor$ and $u=s_{\ell,i}-t$. Note that if $G_{i}[A]$ contains a member of $\mathcal{M}^{+}(u)$, and $G_{i}[B]$ contains a member of $\mathcal{M}^{+}(t)$, then $G_{i}[U_{\ell}]$ contains a member of $\mathcal{M}^{+}(s_{l,i})$. Indeed, suppose that the directed edges $(x_1,y_1),\dots,(x_u,y_u)$ form a copy of a member of $\mathcal{M}^{+}(u)$ in $G_{i}[A]$, and $(x'_1,y'_1),\dots,(x'_t,y'_t)$ form a copy of a member of $\mathcal{M}^{+}(t)$ in $G_{i}[B]$.  If every edge between $A$ and $B$ goes from $A$ to $B$, then setting $x'_{i+t}=x_i$ and $y'_{i+t}=y_{i}$ for $i\in [u]$, the edges $(x'_1,y'_1),\dots,(x'_{u+t},y'_{u+t})$ form a copy of a member of $\mathcal{M}^{+}(s_{l,i})$. On the other hand, if every edge between $A$ and $B$ goes from $B$ to $A$, then setting $x_{i+u}=x'_i$ and $y_{i+u}=y'_{i}$ for $i\in [t]$, the edges $(x_1,y_1),\dots,(x_{u+t},y_{u+t})$ form a copy of a member of $\mathcal{M}^{+}(s_{l,i})$. 
    
    Therefore, either $G_{i}[A]$ contains no member of $\mathcal{M}^{+}(u)$, in which case set $U_{\ell+1}=A$, $s_{\ell+1,i}=u$, and say that $\ell$ is a \emph{small-jump}, or $G_{i}[B]$ contains no member of $\mathcal{M}(t)$, in which case set $U_{\ell+1}=B$, $s_{\ell+1,i}=t$, and say that $\ell$ is a \emph{big-jump}. For $j\in [m]\setminus \{i\}$, set $s_{\ell+1,j}=s_{\ell,j}$.
\end{description}
Let $s_{\ell}=s_{\ell,1}\dots s_{\ell,m}$. Let $L$ be the index $\ell$ at which we stop, let $a$ be the number of big-jumps, and $b$ be the number of small-jumps. Note that if $\ell$ is a small-jump, then $s_{\ell+1}\leq s_{\ell}(1-n^{-1/2}/2)$ and $|U_{\ell+1}|\geq c_1\alpha|U_{\ell}|$, and if  $\ell$ is a big-jump, then $s_{\ell+1}\leq s_{\ell}n^{-1/2}$ and $|U_{\ell+1}|\geq c_2\alpha^{n+1}|U_{\ell}|$. Therefore, $$s_{L}\leq s_{0}n^{-a/2}(1-n^{-1/2}/2)^{b}.$$ 
As $s_{L}\geq 1$ and $s_{0}=(\binom{n+d}{d}+1)^{m}\leq \min\{n^{dm},d^{nm}\}$, we get $$a\leq \frac{2\log s_0}{\log n}\leq 2m\min\left\{d,\frac{n\log d}{\log n}\right\}$$ and $$b\leq 4n^{1/2}\log s_0\leq 4n^{1/2}m\min\{d\log n,n\log d\}.$$
Also, we get 
$$|U_{L}|\geq N(c_1\alpha )^{b}(c_2\alpha^{n+1})^{a}>c_3\alpha^{(n+1)a+b}N,$$
where $c_3=c_3(n,m,d)>0$. Choosing $\alpha=N^{-\beta/\gamma'}$, where $\gamma'=16mn\min\{d,n\log d/\log n\}$, the left hand side is at least $N^{1-\beta}$ for sufficiently large $N$. Therefore, if we stopped in Case 1., then we found a set $U$ such that $G_{\emptyset}[U]$ is a clique of size $|U_{L}|\geq N^{1-\beta}$. On the other hand, if we stopped at Case 2., then we found  $I\subset [m]$, $I\neq [m]$ and a set $U$ such that $G_{I}[U]$ is a clique of size at least $\frac{1}{4m\alpha}\geq\frac{1}{4m}N^{\beta/\gamma'}$. This finishes the proof.
\end{proof}

\begin{proof}[Proof of Theorem \ref{thm:multicolor}]
 Consider an algebraic coloring of $K_{N}$ of complexity $(n,d,m)$ with $t$ colors. Let the polynomials $f_1,\dots,f_{m}$ define the coloring of $K_{N}$. Apply Theorem \ref{thm:central} with $\beta=1/2$, then there exists $I\subset [m]$ such that $G_{I}$ contains a clique of size at least $\frac{1}{4m}N^{1/2\gamma}$, where  $$\gamma=c n m\min\left\{d,\frac{n\log d}{\log n}\right\}.$$
 
 Note that for every $I\subset [m]$, the graph $G_{I}$ is monochromatic, hence, $K_{N}$ contains a monochromatic clique of size at least $\frac{1}{4m}N^{1/2\gamma}$.
\end{proof}

\section{Regularity lemma for algebraic hypergraphs}\label{sect:regularity}

In this section, we prove Theorem \ref{thm:regularity}. We remark that in the rest of the paper, we shall only work with undirected hypergraphs. We prepare the proof of Theorem \ref{thm:regularity} with several lemmas. The following regularity lemma is proved in \cite{FPS19} for hypergraphs of bounded VC-dimension, however, the same proof works almost word-by-word for hypergraphs of bounded weak-VC-dimension as well.

\begin{lemma}\label{lemma:ultrastrong}
Let $r$ be a positive integer and $c,n>0$, then there exists $c'=c'(r,c,n)>0$ such that the following holds. Let $0<\epsilon<1/4$. Let $\mathcal{H}$ be an $r$-uniform hypergraph of weak-VC-dimension $(c,n)$. Then $V(\mathcal{H})$ has an equitable partition $V_1,\dots,V_K$ with $8/\epsilon<K<c'(1/\epsilon)^{2n+1}$ parts such that all but at most $\epsilon$-fraction of the $r$-tuples of parts are $\epsilon$-homogeneous.
\end{lemma}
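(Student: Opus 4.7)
The plan is to adapt the Fox--Pach--Suk proof of the ultra-strong regularity lemma for graphs and hypergraphs of bounded VC-dimension almost verbatim, observing that their argument only uses VC-dimension through the polynomial upper bound on the shatter function. This bound is exactly what weak-VC-dimension $(c,n)$ provides, and in particular Haussler's packing lemma (Lemma~\ref{lemma:packing}) is the one substantive input, and it is stated for precisely this assumption.

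First, write $N=|V(\mathcal{H})|$ and fix $\delta=c_0\epsilon^{2}$ for a small constant $c_0=c_0(r,c,n)$. Viewing $\mathcal{F}=\{N(v):v\in V(\mathcal{H})\}$ as a family of weak-VC-dimension $(c,n)$ over the base set $\mathcal{V}=V(\mathcal{H})^{(r-1)}$, take a maximal $\delta$-separated subfamily $\{N(v_1),\dots,N(v_M)\}\subseteq\mathcal{F}$. By Lemma~\ref{lemma:packing}, $M\le c_1(1/\delta)^n = O((1/\epsilon)^{2n})$, and by maximality every $v\in V(\mathcal{H})$ admits an index $\phi(v)\in[M]$ with $|N(v)\triangle N(v_{\phi(v)})|\le \delta|\mathcal{V}|$. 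Now list the vertices of $V(\mathcal{H})$ so that the classes $C_i=\phi^{-1}(i)$ appear as consecutive intervals and cut the list into $K$ blocks of sizes $\lfloor N/K\rfloor$ or $\lceil N/K\rceil$, where $K$ is chosen in $(8/\epsilon,c'(1/\epsilon)^{2n+1}]$ with $K\ge M/\epsilon$. At most $M$ blocks straddle a class boundary; those, together with a handful of residual blocks, are declared \emph{exceptional}, and their total mass is $O(M\cdot N/K)=O(\epsilon N)$, so they account for at most an $\epsilon$-fraction of all $r$-tuples of parts. Every non-exceptional block lies in a single $C_i$, so any two of its vertices $v,v'$ satisfy $|N(v)\triangle N(v')|\le 2\delta|\mathcal{V}|$.

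Next, for a non-exceptional $r$-tuple $(V_{j_1},\dots,V_{j_r})$ of parts, fix representatives $u_k\in V_{j_k}$ and perform an $r$-step hybrid: replace $V_{j_k}$ by $\{u_k\}$ one coordinate at a time and estimate the change in the count of ordered tuples $(x_1,\dots,x_r)\in V_{j_1}\times\cdots\times V_{j_r}$ that form edges. At the $k$-th step this change is bounded by $\sum_{v\in V_{j_k}}|(N(v)\triangle N(u_k))\cap B_k|$, where $B_k$ is the current $(r-1)$-box $V_{j_1}\times\cdots\times V_{j_{k-1}}\times\{u_{k+1}\}\times\cdots\times\{u_r\}$. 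Each $v\in V_{j_k}$ has $|N(v)\triangle N(u_k)|\le 2\delta|\mathcal{V}|$ globally; averaging over choices of $j_1,\dots,j_{k-1}$ and $u_{k+1},\dots,u_r$ via Markov's inequality shows that for all but an $\epsilon/r$-fraction of tuples the restricted symmetric difference is at most $(\epsilon/r)|B_k|$. Summing over $k$, one concludes that for all but an $\epsilon$-fraction of tuples the density $d(V_{j_1},\dots,V_{j_r})$ lies within $\epsilon$ of the $\{0,1\}$-valued quantity $\mathbf{1}[\{u_1,\dots,u_r\}\in E(\mathcal{H})]$, making such tuples $\epsilon$-homogeneous.

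The main obstacle is precisely the restriction step in the hybrid: Lemma~\ref{lemma:packing} delivers a symmetric-difference bound \emph{averaged} over all of $\mathcal{V}$, while homogeneity of a single $r$-tuple needs it to hold on a box of relative size $K^{-(r-1)}=\epsilon^{\Theta(n)}$, which is far smaller than the global bound can guarantee pointwise. The budget for $\delta$ is also tight: one must keep $M/\epsilon\le c'(1/\epsilon)^{2n+1}$, which forces $\delta$ to be at least a fixed polynomial in $\epsilon$, namely $\delta=\Theta(\epsilon^{2})$. Fitting the two-level Markov averaging inside this budget is exactly where the constant $c_0$ (depending on $r$ and $c,n$) is absorbed; crucially, it affects only the constant $c'$ and not the exponent $2n+1$, so the target bound $K<c'(1/\epsilon)^{2n+1}$ is preserved.
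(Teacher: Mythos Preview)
Your approach is exactly the paper's: it does not supply a proof but simply asserts that the ultra-strong regularity lemma of Fox, Pach and Suk \cite{FPS19} for bounded VC-dimension carries over word-by-word to weak-VC-dimension, since VC-dimension enters that argument only through Haussler's packing lemma, which is already stated here (Lemma~\ref{lemma:packing}) for weak-VC-dimension. You correctly identify this reduction and go further by sketching the Fox--Pach--Suk argument itself, which is more than the paper does.
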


Let us restate the definition of $\mathcal{M}(r,s)$ for undirected hypergraphs. The hypergraph $\mathcal{M}$ is a member of $\mathcal{M}(r,s)$ if the vertex set of $\mathcal{M}$ is the union of  (not necessarily disjoint) $(r-1)$-element sets $Z_{1},\dots,Z_{s}$ and $s$ vertices $z_{1},\dots,z_{s}$, $Z_{i}\cup \{z_{i}\}$ is an edge of $\mathcal{M}$ for $i\in [s]$, and $Z_{i}\cup \{z_{j}\}$ is not an edge for $1\leq i<j\leq s$. The rest of the $r$-tuples can be either edges or non-edges. If $Z_1,\dots,Z_{s}$ and $z_1,\dots,z_s$ satisfy these properties, say that \emph{they define a member of $\mathcal{M}(r,s)$}.

In what follows, we show that if in addition we also assume that $\mathcal{H}$ contains no member of $\mathcal{M}(r,s)$, then those $r$-tuples of parts that have density at most $\epsilon$ can be made empty by removing a few vertices. We first need the following technical lemma, which can be viewed as a variant of Claim \ref{claim:med_degree} for sparse hypergraphs.

\begin{lemma}\label{lemma:med_degree2}
Let $\mathcal{G}$ be a nonempty $r$-partite $r$-uniform hypergraph with vertex classes $W_1,\dots,W_r$ such that $d(W_1,\dots,W_r)\leq \epsilon$. Then there exist $\ell\in [r]$ and $X\in\bigcup_{i\in [r]\setminus\{\ell\}}W_{i}$ such that $|X|=r-1$ and $1\leq |N(X)|\leq \epsilon^{1/r}|W_{\ell}|$.
\end{lemma}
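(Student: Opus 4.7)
The plan is to deduce the statement from the Loomis--Whitney inequality after a standard averaging step. I would begin by negating the conclusion: suppose for contradiction that for every $\ell\in[r]$ and every $(r-1)$-element transversal $X$ of $\{W_i:i\in[r]\setminus\{\ell\}\}$ (one vertex from each part other than $W_\ell$) with $N(X)\neq\emptyset$ one has $|N(X)|>\epsilon^{1/r}|W_\ell|$. My task is then to contradict the density hypothesis $|E(\mathcal{G})|\le\epsilon|W_1|\cdots|W_r|$.

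Next I would set up the usual projections. For each $\ell\in[r]$, let $P_\ell$ denote the image of $E(\mathcal{G})$ under the projection $W_1\times\cdots\times W_r\to\prod_{i\neq\ell}W_i$ that forgets the $\ell$-th coordinate; equivalently, $P_\ell$ is exactly the set of $(r-1)$-transversals of $\{W_i:i\neq\ell\}$ with nonempty neighborhood. Double counting gives $|E(\mathcal{G})|=\sum_{X\in P_\ell}|N(X)|$, and the contradiction assumption forces each summand to exceed $\epsilon^{1/r}|W_\ell|$, so
$$|P_\ell|<\frac{|E(\mathcal{G})|}{\epsilon^{1/r}|W_\ell|}\qquad\text{for every }\ell\in[r].$$

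Now I would invoke the Loomis--Whitney inequality in its discrete form, which asserts $|E(\mathcal{G})|^{r-1}\le\prod_{\ell=1}^r|P_\ell|$ for any subset $E(\mathcal{G})\subseteq W_1\times\cdots\times W_r$. Multiplying the bounds from the previous paragraph and combining with Loomis--Whitney gives
$$|E(\mathcal{G})|^{r-1}<\prod_{\ell=1}^{r}\frac{|E(\mathcal{G})|}{\epsilon^{1/r}|W_\ell|}=\frac{|E(\mathcal{G})|^{r}}{\epsilon\,|W_1|\cdots|W_r|},$$
which rearranges to $|E(\mathcal{G})|>\epsilon|W_1|\cdots|W_r|$, contradicting $d(W_1,\dots,W_r)\le\epsilon$.

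I do not expect any real obstacle: identifying Loomis--Whitney as the right tool is the only non-routine step, and the remaining averaging and algebra are mechanical. The degenerate case $r=1$ (where Loomis--Whitney becomes the trivial statement $1\le|P_1|$) needs no separate treatment, since the averaging inequality by itself already yields $|E(\mathcal{G})|>\epsilon|W_1|$ in that case.
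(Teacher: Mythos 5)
Your proof is correct and takes a genuinely different route from the paper's. The paper argues by induction on $r$: among the vertices of positive degree, one part $W_j$ must retain at least an $\epsilon^{1/r}$-fraction of its vertices (this step tacitly assumes $\epsilon$ is the exact density, which is harmless since the conclusion is monotone in $\epsilon$); averaging then produces a vertex $x\in W_j$ whose link is an $(r-1)$-partite hypergraph of density at most $\epsilon^{(r-1)/r}$, and the induction hypothesis applied to the link yields the desired $(r-1)$-set after appending $x$. Your argument instead negates the conclusion, bounds each coordinate-deletion projection $|P_\ell|$ by $|E(\mathcal{G})|/(\epsilon^{1/r}|W_\ell|)$ via a single averaging step, and closes with the discrete Loomis--Whitney inequality $|E(\mathcal{G})|^{r-1}\le\prod_\ell|P_\ell|$ to contradict the density hypothesis. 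The Loomis--Whitney route is shorter, avoids the implicit normalization $\epsilon = d(W_1,\dots,W_r)$, and works in one shot rather than peeling off one coordinate at a time; the cost is that it invokes Loomis--Whitney (or equivalently Shearer's entropy inequality) as an external tool, whereas the paper's induction is self-contained. Both establish the lemma.
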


\begin{proof}
We proceed by induction on $r$. In case $r=1$, the statement is trivial, so assume $r\geq 2$.

Let $U\subset V(\mathcal{G})$ be the set of vertices with positive degree, and let $U_i=W_i\cap U$ for $i\in [r]$. Then $|U_1|\cdots|U_r|\geq \epsilon |W_1|\cdots|W_{r}|$, so there exists $j\in [r]$ such that $|U_j|\geq \epsilon^{1/r}|W_j|$. Without loss of generality, assume that $j=r$. Then by simple averaging, there exists $x\in U_r$ such that 
 $$|N(x)|\leq \frac{|E(\mathcal{G})|}{|U_r|}\leq \epsilon^{(r-1)/r}|W_1|\dots |W_{r-1}|.$$
 
 Let $\mathcal{G}'$ be the $(r-1)$-partite $(r-1)$-uniform hypergraph with vertex classes $W_1,\dots,W_{r-1}$ formed by the neighborhood of $x$. By our induction hypothesis, there exists $\ell\in [r-1]$ and $X'\in\bigcup_{i\in [r-1]\setminus\{\ell\}}W_{i}$ such that $|X'|=r-2$ and $1\leq |N(X')|\leq \epsilon^{1/r}|W_{\ell}|$. But then $\ell$ and $X=X'\cup\{x\}$ satisfy our desired properties.
\end{proof}

The next lemma will be the heart of the proof of Theorem \ref{thm:regularity}. Before we state it, let us introduce a definition. Let $\mathcal{H}$ be an $r$-uniform hypergraph and let $(V_{1},\dots,V_{K})$ be disjoint subsets of its vertex set. Suppose that $Z_{1},\dots,Z_{s}\in V(\mathcal{H})^{(r-1)}$ and $z_{1},\dots,z_{s}\in V(\mathcal{H})$ define a member of $\mathcal{M}(r,s)$ in $\mathcal{H}$. If $z_{1},\dots,z_{s}$ belong to the same part $V_{j}$, and no two vertices of $Z_{i}\cup\{z_{i}\}$ belong to the same part for $i\in [s]$, then say that $\bigcup_{i=1}^{s} Z_{i}\cup\{z_1,\dots,z_{s}\}$ induces a \emph{focused} copy of a member of $\mathcal{M}(r,s)$, and $Z_{1},\dots,Z_{s}$ and $z_{1},\dots,z_{s}$ defines a \emph{focused} member of $\mathcal{M}(r,s)$.

\begin{lemma}\label{lemma:cleaning}
 Let $r,s$ be positive integers, then there exists $c=c(r,s)>0$ such that the following holds. Let $\epsilon_0>0$, let $\mathcal{H}$ be an $r$-uniform hypergraph, and let $U_{1},\dots,U_{L}$ be disjoint subsets of $V(\mathcal{H})$. Suppose that $\mathcal{H}$ contains no focused copy of a member of $\mathcal{M}(r,s)$. Let $\mathcal{G}$ be an $r$-uniform hypergraph on $[L]$ such that if $\{i_1,\dots,i_r\}$ is an edge of $\mathcal{G}$, then $d_{\mathcal{H}}(U_{i_1},\dots,U_{i_r})\leq \epsilon_0$. Then for $i\in [L]$, there exists $V_{i}\subset U_{i}$ such that $|V_{i}|\geq (1-c\epsilon_0^{1/r!})|U_i|$, and $(U_{i_1},\dots,U_{i_r})$ is empty for every $\{i_1,\dots,i_r\}\in E(\mathcal{G})$.
\end{lemma}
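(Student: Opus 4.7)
I would construct all the $V_i$ simultaneously via a single greedy iterative deletion procedure: initialize $V_i := U_i$ for every $i \in [L]$, and while some edge $\{i_1,\dots,i_r\} \in E(\mathcal{G})$ has its associated $r$-tuple $(V_{i_1},\dots,V_{i_r})$ still containing an $\mathcal{H}$-edge, apply Lemma \ref{lemma:med_degree2} to the nonempty $r$-partite subhypergraph of $\mathcal{H}$ induced on $(V_{i_1},\dots,V_{i_r})$ to obtain some $\ell \in [r]$ and an $(r-1)$-tuple $X$ (with one vertex in each $V_{i_j}$ for $j \neq \ell$) satisfying $1 \leq |N(X) \cap V_{i_\ell}| \leq \epsilon^{1/r}|V_{i_\ell}|$, where $\epsilon$ is the current density. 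Then delete $N(X) \cap V_{i_\ell}$ from $V_{i_\ell}$ and repeat until every such $r$-tuple is empty.

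\textbf{Bounding iterations per part via no focused $\mathcal{M}(r,s)$.} For each $\ell \in [L]$, let $I_\ell$ be the set of iterations that deleted from $V_\ell$, and let $X_j$ denote the $(r-1)$-tuple chosen at step $j \in I_\ell$. Pick $z_j \in N(X_j) \cap V_\ell^{(j-1)}$ arbitrarily (this intersection is nonempty by the lemma), where $V_\ell^{(j-1)}$ denotes the state of $V_\ell$ just before step $j$, and set $Z_j := X_j$. Every $z_j$ lies in $U_\ell$, each $Z_j$ spans $r-1$ pairwise distinct other parts, and for $i < j$ in $I_\ell$ one has $z_j \in V_\ell^{(j-1)} \subseteq V_\ell^{(i)} = V_\ell^{(i-1)} \setminus N(X_i)$, hence $Z_i \cup \{z_j\} \notin E(\mathcal{H})$. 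Thus $(z_j, Z_j)_{j \in I_\ell}$ defines a focused copy of a member of $\mathcal{M}(r,|I_\ell|)$ in $\mathcal{H}$, which by hypothesis forces $|I_\ell| \leq s-1$.

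\textbf{Density bootstrap and conclusion.} At any stage, the density of $(V_{i_1},\dots,V_{i_r})$ is at most $\epsilon_0 \prod_j |U_{i_j}|/|V_{i_j}| \leq \epsilon_0(1-\delta)^{-r}$, where $\delta$ is the maximum fraction deleted from any $U_i$ so far. I would run a bootstrap: assume inductively $\delta \leq 1/2$ (trivially true initially). Then the density never exceeds $2^r\epsilon_0$, so each iteration removes at most $(2^r\epsilon_0)^{1/r}|V_{i_\ell}| \leq 2\epsilon_0^{1/r}|U_\ell|$ from its target part. Combined with $|I_\ell| \leq s-1$, the total deletion from any $U_\ell$ is at most $2(s-1)\epsilon_0^{1/r}|U_\ell|$, which closes the bootstrap provided $\epsilon_0 \leq (4(s-1))^{-r}$; the regime of larger $\epsilon_0$ is absorbed by choosing $c$ large enough that $c\epsilon_0^{1/r!} \geq 1$. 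Since $\epsilon_0^{1/r} \leq \epsilon_0^{1/r!}$ for $\epsilon_0 \leq 1$ and $r \geq 1$, this yields $|V_i| \geq (1 - c\epsilon_0^{1/r!})|U_i|$ with $c = c(r,s)$. The bootstrap on $\delta$ is the only slightly delicate step; extracting the focused $\mathcal{M}(r,s)$ witness, while the heart of the argument, is clean once the ordering of iterations is correctly recorded, and I anticipate no further substantial obstacle.
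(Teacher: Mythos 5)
Your proof is correct, and it takes a genuinely different and in fact cleaner route than the paper's. The paper proves the lemma by induction on $r$: it first removes a set $T_k$ of vertices with small-but-positive neighborhoods, packages the $(r-1)$-tuples with large residual degree into an auxiliary $(r-1)$-uniform hypergraph $\mathcal{H}'=\bigcup_X F_X$, shows $\mathcal{H}'$ inherits both the low-density and the no-focused-$\mathcal{M}(r-1,s)$ properties, and recurses; each recursion level costs a factor $\epsilon_1=\epsilon_0^{1/r}$, which is exactly where the $1/r!$ in the exponent comes from. You instead iterate Lemma~\ref{lemma:med_degree2} directly and bound the number of times any fixed part is touched by extracting a focused-$\mathcal{M}(r,|I_\ell|)$ witness ordered by iteration time: each chosen $z_j$ is deleted at step $j$ so the $z_j$ are distinct and all lie in $U_\ell$, each $Z_j=X_j$ meets $r-1$ distinct parts other than $U_\ell$, and for $i<j$ in $I_\ell$ the survival of $z_j$ past step $i$ gives $z_j\notin N(X_i)$, yielding the non-edge. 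This is sound and avoids the recursion entirely, and the bootstrap (a simultaneous induction on the step counter, with $|I_\ell|\le s-1$ and the per-step density bound both available at each step) closes cleanly for $\epsilon_0\le (4(s-1))^{-r}$, with larger $\epsilon_0$ absorbed into $c$. As a result you actually establish the stronger statement with $\epsilon_0^{1/r}$ in place of $\epsilon_0^{1/r!}$ — strictly better for $r\ge 3$ — which would propagate to an improved exponent, $K<c(1/\epsilon)^{r(2n+1)}$, in Theorem~\ref{thm:regularity}. The one small point worth stating explicitly when writing this up is the distinctness of the $z_j$ (required by the definition of $\mathcal{M}(r,s)$), which follows because each $z_j$ is in the set deleted at step $j$ and hence cannot reappear at a later step.
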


\begin{proof}
We prove this by induction on $r$. We proceed with the base case $r=2$ and the general case $r\geq 3$ simultaneously. Let $T:=\emptyset$, and let $\epsilon_1=\epsilon_0^{1/r}$. Let $X\in [L]^{(r-1)}$ and $k\in [L]\setminus X$. If $X\cup \{k\}\not\in E(\mathcal{G})$, set $F_{X,k}=\emptyset$. Otherwise, let $F_{X,k}$ be the $(r-1)$-partite $(r-1)$-uniform hypergraph on $\bigcup_{x\in X}U_{x}$, whose edges are the $(r-1)$-tuples $Z\in V(\mathcal{H})^{(r-1)}$ such that $Z$ has one vertex in $U_{x}$ for $x\in X$, and $|N_{\mathcal{H}}(Z)\cap U_{k}|\geq \epsilon_{1} |U_{k}|$. As $d_{\mathcal{H}}(\{U_{x}\}_{x\in X},U_{k})\leq \epsilon_{0}$, we have 
\begin{equation}
\label{eqn1}    
d_{F_{X,k}}(\{U_{x}\}_{x\in X})\leq \frac{\epsilon_{0}}{\epsilon_{1}}=\epsilon_1^{r-1}.
\end{equation}
Also, define $T_{k}\subset U_{k}$ as the union of the sets $U_{k}\cap N_{\mathcal{H}}(Z')$, where $Z'\in (V(\mathcal{H})\setminus U_{k})^{(r-1)}$, has all its vertices in different parts 
and $|U_{k}\cap N_{\mathcal{H}}(Z')|\leq \epsilon_{1}|U_{k}|$. 
\begin{claim}
$$|T_{k}|\leq s\epsilon_{1}|U_{k}|.$$ 
\end{claim}
\begin{proof}
Suppose that $|T_{k}|>s\epsilon_{1}|U_{k}|$. Then we can find greedily $Z_{1},\dots,Z_{s}\in (V(\mathcal{H})\setminus U_{k})^{(r-1)}$ and $z_{1},\dots,z_{s}\in T_{k}$ such that $z_{i}\in N(Z_{i})$ for $i\in [s]$ and $z_{i}\not\in N(Z_{j})$ for $i>j$. But then $Z_{1},\dots,Z_{s}$ and $z_1\dots,z_s$ induces a focused copy of a member of $\mathcal{M}(r,s)$, contradiction.
\end{proof}

For $k\in [L]$, let $U'_{k}=U_{k}\setminus T_{k}$. Note that if $X\cup \{k\}\in E(\mathcal{G})$ and $Z\cup \{z\}$ is an edge of $\mathcal{H}$ for some $Z$ having one vertex in every $U'_{x}, x\in X$  and $z\in U'_{k}$, then $Z\in E(F_{X,k})$. Remove every edge $Z$ from $F_{X,k}$ which has no neighbor in $U'_{k}$. Let $F_{X}=\bigcup_{k\in [L]\setminus X}F_{X,k}$.

Now let us consider the base case $r=2$. Then every $F_{X,k}$ is a just a subset of $U_X$, which by $(\ref{eqn1})$ has size at most $\epsilon_1|U_X|$. This implies
that $|F_{X}|\leq s\epsilon_1 |U_{X}|$ for $X\in [L]$. Indeed, otherwise, by a simple greedy argument, we can find $z_1,\dots,z_{s}\in F_{X}$ and $k_1,\dots,k_s$ such that $z_{i}\in F_{X,k_i}$ and $z_{i}\not\in F_{X,k_j}$ for $i>j$. Choose an arbitrary vertex $w_{i}\in N(z_{i})\cap U'_{k_i}$, then $w_1,\dots,w_{s}$ and $z_1,\dots,z_n$ define a focused copy of a member of $\mathcal{M}(2,s)$, contradiction. For $i\in [L]$, set $V_{i}=U'_{i}\setminus F_{i}$. Then $|V_{i}|\geq |U_{i}|(1-2s\epsilon_1)$, and there are no edges between $V_{i}$ and $V_j$ for $\{i,j\}\in E(\mathcal{G})$. Therefore, choosing $c(2,s)=2s$ proves this case.
Now consider the general case $r\geq 3$.

\begin{claim}
$$d_{F_{X}}(\{U_{x}\}_{x\in X})<3rs\epsilon_1.$$
\end{claim}
\begin{proof}
Let $F:=F_{X}$, let $W_{x}:=U'_{x}$ for $x\in X$. In what follows, step-by-step, we define $|X|$ sequences of pairs $(Z_{i,x},z_{i,x})\in (\bigcup_{y\in X\setminus\{x\}} U'_{y})^{(r-1)}\times U'_{x}$ for $x\in X$ and $i=0,1,\dots$, while removing elements from $W_{x}$ and edges from $F$. Suppose that $j_x$ is the largest index $i$ for which $(Z_{i,x},z_{i,x})$ is already defined.  If $(\{W_{x}\}_{x\in X})$ spans no edge in $F$, then stop. Otherwise, there exists $k\in [L]$ such that $F_{X,k}$ is not empty on $(\{W_{x}\}_{x\in X})$. By applying Lemma \ref{lemma:med_degree2} to the $(r-1)$-tuple $(\{U_x\}_{x\in X})$ and hypergraph $F_{X,k}\cap F$ (which by $(\ref{eqn1})$ has density at most $\epsilon_1^{r-1}$), there exist $x\in X$ and $Z\in \bigcup_{y\in X\setminus\{x\}} W_{y}$ such that $1\leq |N_{F_{X,k}\cap F}(Z)\cap W_{x}|\leq \epsilon_1|U_{x}|$. Let $z_{j_{x}+1,x}$ be an arbitrary element of $N_{F_{X,k}\cap F}(Z)\cap W_{x}$. Furthermore, remove the elements of $N_{F_{X,k}\cap F}(Z)$ from $W_{x}$, and remove the edges of $F_{X,k}$ from $F$. Also, let 
$Z_{j_{x}+1,x}:=Z$ and define $k_{j_x+1,x}:=k$.

After we stop, let $s_x$ be the largest index $i$ such that $(Z_{i,x},z_{i,x})$ is defined. Note that for every $x\in X$ and $i\leq s_x$, $Z_{1,x},\dots,Z_{i,x}$ and $z_{1,x},\dots,z_{i,x}$ define a focused copy of a member of $\mathcal{M}(r-1,i)$. Let $u_{i,x}$ be an arbitrary element of $U'_{k_{i,x}}\cap N_{\mathcal{H}}(Z_{i,x}\cup\{z_{i,x}\})$. Then $Z_{1,x}\cup\{u_{1,x}\},\dots,Z_{i,x}\cup\{u_{i,x}\}$ and $z_{1,x},\dots,z_{i,x}$ define a focused copy of a member of $\mathcal{M}(r,i)$ in $\mathcal{H}$. Therefore, we have $s_x<s$. Also, at each step when we modify $W_x$, we remove at most $\epsilon_1|U_{x}|$ elements, so after we stop, we have $|W_x|\geq |U'_{x}|-s_{x}\epsilon_1|U_{x}|$.

Let us bound $d_{F_{X}}(\{U_{x}\}_{x\in X})$. Recall that $d_{F_{X,k_{i,x}}}(\{U_{x}\}_{x\in X}) \leq \epsilon_1^{r-1}$ by $(\ref{eqn1})$. Let $F^{*}=\bigcup_{x\in X}\bigcup_{i\in [s_x]} F_{X,k_{i,x}}$. Then 
$$d_{F^{*}}(\{U_{x}\}_{x\in X})<\sum_{x\in X}\sum_{i\in [s_x]} d_{F_{X,k_{i,x}}}(\{U_{x}\}_{x\in X})<s(r-1)\epsilon_1^{r-1}.$$ 
If $k\neq k_{i,x}$ for every $x\in X$ and $i\in [s_x]$, then every edge of $F_{k,X}$ touches $U_{x}\setminus W_{x}$ for some $x\in X$.
Note that $|U_{x}\setminus W_{x}|=|U_{x}\setminus U'_{x}|+|U'_{x}\setminus W_{x}| \leq 2s\epsilon_1|U_x|$. Hence 
$\frac{|U_{x}\setminus W_{x}|}{|U_{x}|}< 2s\epsilon_1$. Therefore, we have
$$d_{F_{X}\setminus F^{*}}(\{U_{x}\}_{x\in X})<2(r-1)s\epsilon_1.$$
Finally, by our construction, $(\{W_{x}\}_{x\in X})$ spans no edges.  Thus, $d_{F_{X}}(\{U_{x}\}_{x\in X})<3rs\epsilon_1.$
\end{proof}
 
\noindent 
Since $|U'_x| \geq (1-s\epsilon_1)|U_x|$, we can write $\prod_{x \in X} |U'_x| \geq (1-s\epsilon_1)^{r-1}\prod_{x \in X} |U_x| \geq \frac{1}{2} \prod_{x \in X} |U_x|$,
assuming  $s\epsilon_1<1/10r$. Therefore, from the last claim we have that $d_{F_{X}}(\{U'_{x}\}_{x\in X})\leq 6rs\epsilon_1$.

Let $\mathcal{H}'=\bigcup_{X\in [L]^{(r-1)}}F_{X}$. We show that $\mathcal{H}'$ contains no focused copy of a member of $\mathcal{M}(r-1,s)$ with respect to the disjoint sets $U'_1,\dots,U'_{L}$. Otherwise, there exists $W_{1},\dots,W_{s}\in V(\mathcal{H}')^{(r-2)}$ and $w_{1},\dots,w_{s}\in U'_j$ for some $j\in [L]$ defining a focused member of $\mathcal{M}(r-1,s)$. Let $Z_{i}=W_{i}\cup\{w_i\}$. But then for $i\in [s]$, there exists $k_i\in [L]$ and $X_{i}\in [L]^{(r-1)}$ such that $Z_{i}\in F_{X_{i},k_i}$. Picking $z_{i}\in N_{\mathcal{H}}(Z_{i})\cap U_{i}'$ arbitrarily, $\{W_{i}\cup\{z_i\}\}_{i\in [s]}$ and $\{w_{i}\}_{i\in [s]}$ define a focused member of $\mathcal{M}(r,s)$ in $\mathcal{H}$, contradiction.
    
But then we can apply our induction hypothesis with the following parameters: the $(r-1)$-uniform hypergraph $\mathcal{H}'$ instead of $\mathcal{H}$, the disjoint sets $U_1',\dots,U_{L}'$ instead of $U_1,\dots,U_L$, $6rs\epsilon_1$ instead of $\epsilon_0$, and the complete $(r-1)$-uniform hypergraph on $[L]$ instead of $\mathcal{G}$. 
Recalling that $|U'_i| \geq (1-s\epsilon_1)|U_i|$ and $\epsilon_1=\epsilon_0^{1/r}$, we conclude that for $i\in [L]$, there exists $V_{i}\subset U_{i}'$ such that $$|V_{i}|\geq (1-c(r-1,s)(6rs\epsilon_1)^{1/(r-1)!})|U_{i}'|\geq (1-c(r,s)\epsilon_0^{1/r!})|U_{i}|$$ for some suitable $c(r,s)>0$, and $(V_{i_1},V_{i_2},\dots,V_{i_{r-1}})$ is empty for $(i_1,\dots,i_{r-1})\in [L]^{(r-1)}$. But note that then $(V_{i_1},\dots,V_{i_r})$ is also empty in $\mathcal{H}$ if $\{i_1,\dots,i_r\}\in E(\mathcal{G})$. This finishes the proof.
\end{proof}

Now everything is set to prove the main theorem of this section.

\begin{proof}[Proof of Theorem \ref{thm:regularity}]
 Let $\epsilon_0>0$, which we shall specify later as a function of $\epsilon$. By Lemma \ref{lemma:weakvc} and Lemma \ref{lemma:ultrastrong}, there exists an equitable partition of  $V(\mathcal{H})$ into $L$ parts $U_1,\dots,U_L$  such that $8/\epsilon_{0}<L<c'(1/\epsilon_{0})^{2n+1}$ and all but at most $\epsilon_{0}$-fraction of the $r$-tuples of parts are $\epsilon_{0}$-homogeneous.
 
Let $s=\binom{n+d}{d}+1$, then $\mathcal{H}$ contains no member of $\mathcal{M}(r,s)$ by Lemma \ref{lemma:matching}. In particular, $\mathcal{H}$ contains no focused copy of a member of $\mathcal{M}(r,s)$ with respect to the partition $U_1,\dots,U_L$. Let $c_0=c(r,s)$ be the constant given by Lemma \ref{lemma:cleaning}. Then for every $i\in [L]$, there exists $V'_{i}\subset U_{i}$ such that $$|V'_{i}|\geq (1-c_0 \epsilon_0^{1/r!})|U_{i}|\geq (1-c_0 \epsilon_0^{1/r!})\left\lfloor \frac{N}{L}\right\rfloor,$$ and $(V'_{i_1},\dots,V'_{i_r})$ is empty if $d(U_{i_1},\dots,U_{i_r})<\epsilon_0$. Let $K$ be a minimal positive integer such that $N/K<\min_{i\in [L]}|V_{i}'|$, then $K<L+2c_{0}\epsilon_{0}^{1/r!}L<2L$. 
In particular, when we choose $\epsilon_0$ it will satisfy $c_{0}\epsilon_{0}^{1/r!}<1/2$. Let $V_1,\dots,V_{K}$ be any equitable partition of $V(\mathcal{H})$ such that $V_{i}\subset V_{i}'$ for $i\in [L]$, this clearly exists by the definition of $K$.

Say that an $r$-tuple $\{i_1,\dots,i_r\}\in [K]^{(r)}$ is \emph{good} if $d(V_{i_1},\dots,V_{i_r})>1-2^{r}\epsilon_0$, or $(V_{i_1},\dots,V_{i_r})$ is empty. Otherwise, say that $\{i_1,\dots,i_r\}$ is \emph{bad}. Note that if $\{i_1,\dots,i_r\}\in [L]^{r}$, then $\{i_1,\dots,i_r\}$ is good if $(U_{i_1},\dots,U_{i_r})$ is $\epsilon$-homogeneous. Indeed, if $d(U_{i_1},\dots,U_{i_r})>1-\epsilon_0$, then $$d(V_{i_1},\dots,V_{i_r})>1-\frac{|U_{i_1}|\dots |U_{i_r}|}{|V_{i_1}|\dots|V_{i_r}|}\epsilon_0\geq 1-2^{r}\epsilon_0,$$ and if $d(U_{i_1},\dots,U_{i_r})<\epsilon_0$, then $(V_{i_1},\dots,V_{i_r})$ is empty by definition. The number of $r$-tuples $\{i_1,\dots,i_r\}\in [K]^{(r)}$ having a nonempty intersection with $\{L+1,\dots,K\}$ is less than $(K-L)K^{r-1}$. Hence, the total number of bad $r$-tuples is at most 
$$\epsilon_0\binom{L}{r}+(K-L)K^{r-1}< \epsilon_0 \binom{K}{r}+2c_0\epsilon_{0}^{1/r!} K^{r}\leq c_{1}\epsilon_{0}^{1/r!}\binom{K}{r},$$ 
where $c_{1}=c_1(r,s)>c_0$ depends only on $r$ and $s$. Choose $\epsilon_0$ such that both $c_1\epsilon_0^{1/r!},  2^{r}\epsilon_0 \leq \epsilon<1/4$. Then $V_1,\dots,V_K$ is an equitable partition of $\mathcal{H}$ such that $8/\epsilon<8/\epsilon_0<K\leq 2c'(1/\epsilon_0)^{2n+1}<c(1/\epsilon)^{r!(2n+1)}$ for some $c=c(r,s)>0$, and all but at most an $\epsilon$-fraction of the $r$-tuples of parts $(V_{i_1},\dots,V_{i_r})$ satisfy that either $d(V_{i_1},\dots,V_{i_r})>1-2^{r}\epsilon_0>1-\epsilon$, or $(V_{i_1},\dots,V_{i_r})$ is empty.
\end{proof}

\section{A Ramsey-type result for algebraic hypergraphs}\label{sect:hypergraph}

In this section, we prove Theorem \ref{thm:hereditary}. First, let us extend the list of forbidden hypergraphs in algebraic hypergraphs of constant complexity. In order to do this, we use a result about the maximum flattening rank of tensors which are almost diagonal. Let $T:A_1\times\dots\times A_{r}\rightarrow \mathbb{F}$ be an $r$-dimensional tensor. Let the \emph{max-flattening rank} of $T$, denoted by $\mfrank(T)$, be the maximum of $\frank_i(T)$ for $i\in [r]$. Say that an $r$-dimensional tensor $T:A^{r}\rightarrow \mathbb{F}$ is \emph{semi-diagonal} if the following holds. Let $a_1,\dots,a_r\in A$, then $T(a_1,\dots,a_{r})=0$ if  $a_1,\dots,a_r$ are pairwise distinct, and $T(a_1,\dots,a_r)\neq 0$ if $a_1=\dots=a_d$. If $a_1,\dots,a_r$ are neither all equal or all distinct, there is no restriction on $T(a_1,\dots,a_r)$. We prove the following result in the companion note \cite{frank}.

\begin{theorem}\label{thm:semidiag}
Let $T:A^{r}\rightarrow \mathbb{F}$ be an $r$-dimensional semi-diagonal tensor. Then $$\mfrank(T)\geq \frac{|A|}{r-1}.$$
\end{theorem}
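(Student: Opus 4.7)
My approach is to prove the bound by induction on $r$. For the base case $r = 2$, a semi-diagonal tensor is a matrix with nonzero diagonal entries and zero off-diagonal entries, so it is a diagonal matrix of rank $|A| = |A|/(r-1)$, confirming $\mfrank(T) \geq |A|$.

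For the inductive step ($r \geq 3$), I would argue by contradiction: assume $\frank_i(T) < |A|/(r-1)$ for every $i \in [r]$. Let $V_i \subseteq \mathbb{F}^A$ be the column span of the $i$-th flattening of $T$, so $\dim V_i = \frank_i(T)$ and $T \in V_1 \otimes \cdots \otimes V_r$ (the columns of the $i$-th flattening, which lie in $V_i$, witness this containment in each mode, and intersecting over all modes gives the product). Setting $L_i := V_i^{\perp}$ in $\mathbb{F}^A$ under the standard inner product, one has $\dim L_i > |A|(r-2)/(r-1)$. For any choice $\lambda_i \in L_i$, the orthogonality $\lambda_i \perp V_i$ forces $T(\lambda_1, \ldots, \lambda_r) = 0$; expanding this identity and using semi-diagonality to kill the all-distinct terms yields
\[
\sum_{a \in A} t_a \prod_{i=1}^r \lambda_i(a) \;=\; -\sum_{\vec{a}\text{ mixed}} T(\vec{a}) \prod_{i=1}^r \lambda_i(a_i),
\]
where ``mixed'' denotes tuples $\vec a$ with at least two equal coordinates but not all coordinates equal.

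The heart of the proof is to leverage the large dimensions of the $L_i$'s to choose $\lambda_i$'s for which the left-hand ``diagonal'' sum is provably nonzero while the right-hand ``mixed'' sum vanishes, producing the required contradiction. My plan is to pick $\lambda_i$'s with a small common support of size at most $r-1$ and to impose additional orthogonality constraints that collectively annihilate the mixed term—where feasibility of these constraints is guaranteed by the ample dimension $|A|(r-2)/(r-1)$ of each $L_i$—then to reduce the resulting identity to an application of the inductive hypothesis on a suitable $(r-1)$-dimensional semi-diagonal subtensor obtained by partial contraction. The main obstacle is the delicate bookkeeping needed to get the constant $r-1$ exactly right: a naive pigeonhole loses a factor, and tightness of the bound—witnessed by the extremal construction $T(\vec a) = \sum_j \prod_i \mathbf{1}_{A_j}(a_i)$ for a partition $A = A_1 \sqcup \cdots \sqcup A_{|A|/(r-1)}$ into blocks of size $r-1$, which achieves $\mfrank(T) = |A|/(r-1)$—dictates that the choice of supports and auxiliary constraints must track precisely the block geometry of this extremal example. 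The full details are carried out in the companion note~\cite{frank}.
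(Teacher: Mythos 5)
The paper does not actually prove this theorem here; it states it and immediately defers to the companion note \cite{frank}. Your proposal, likewise, is not a proof: after a correct base case and a correct but weak orthogonality observation, you explicitly hand off ``the delicate bookkeeping'' to the very same companion note. Since the entire content of the theorem is concentrated in exactly the step you leave as a ``plan,'' this is a genuine gap rather than a proof.

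Beyond being incomplete, the sketched plan has a concrete obstruction. You propose to pick $\lambda_i\in L_i=V_i^\perp$ supported on a common set $S$ with $|S|\le r-1$. But $\dim(L_i\cap \mathbb{F}^S)\ge \dim L_i+|S|-|A|> |A|\tfrac{r-2}{r-1}+(r-1)-|A|=(r-1)-\tfrac{|A|}{r-1}$, and this lower bound is nonpositive as soon as $|A|>(r-1)(r-2)$. So for all but boundedly many $A$ there is no guarantee that $L_i$ contains any nonzero vector supported on a prescribed $(r-1)$-element set, and the feasibility of your constraint system is unsubstantiated. You would need to choose $S$ adaptively depending on the spaces $V_i$ (or abandon the fixed-support idea altogether), and you do not indicate how. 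A further structural issue: the identity $T(\lambda_1,\dots,\lambda_r)=0$ already follows from a single $\lambda_i\in L_i$, so the orthogonality hypotheses in the remaining $r-1$ modes contribute nothing to the identity itself; they only become useful if the ``additional constraints'' are designed to make the mixed sum vanish, and those constraints depend on the unknown mixed entries of $T$. None of this is carried out. The tightness example you give (the block construction with blocks of size $r-1$) is correct and is a nice sanity check, but it does not substitute for the missing argument.
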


Let $\mathcal{N}_{r,s}$ be the family of $r$-uniform hypergraphs $\mathcal{N}$ having the following form. There are $s$ disjoint edges $f_{i}=\{u_{i,1},\dots,u_{i,r}\}$ for $i\in [s]$, whose union is the vertex set of $\mathcal{N}$, and the $r$ element set $\{u_{i_1,1},\dots,u_{i_r,r}\}$ is not an edge for any $r$-tuple $(i_1,\dots,i_r)\in [s]^{r}$, where $i_1,\dots,i_r$ are pairwise distinct. The rest of the $r$ element subsets of the vertex set can be either edges or non-edges.

\begin{lemma}\label{lemma:forbidden2}
Let $\mathcal{H}$ be a strongly-algebraic $r$-uniform hypergraph of complexity $(n,d)$. Then $\mathcal{H}$ contains no member of $\mathcal{N}_{r,s}$ for $s>(r-1)\binom{n+d}{d}$.
\end{lemma}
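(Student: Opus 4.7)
The plan is to build a semi-diagonal tensor out of a hypothetical copy of $\mathcal{N}\in\mathcal{N}_{r,s}$, then play the lower bound of Theorem~\ref{thm:semidiag} against the upper bound of Lemma~\ref{lemma:tensorrank}. Let $f:(\mathbb{F}^n)^r\to\mathbb{F}$ be a polynomial of degree at most $d$ defining $\mathcal{H}$, and suppose the edges of $\mathcal{N}$ are $f_i=\{u_{i,1},\dots,u_{i,r}\}$ for $i\in[s]$. Define the $r$-dimensional tensor $T:[s]^r\to\mathbb{F}$ by
$$T(i_1,\dots,i_r)=f(u_{i_1,1},u_{i_2,2},\dots,u_{i_r,r}).$$

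First I would verify that $T$ is semi-diagonal. When $i_1=\cdots=i_r=i$, the arguments of $f$ enumerate the edge $f_i$, so $T(i,\dots,i)\neq 0$ because $f_i\in E(\mathcal{H})$ and $\mathcal{H}$ is strongly-algebraic. When $i_1,\dots,i_r$ are pairwise distinct, the vertices $u_{i_j,j}$ lie in pairwise disjoint edges $f_{i_1},\dots,f_{i_r}$, hence are distinct, and by the definition of $\mathcal{N}_{r,s}$ the set $\{u_{i_1,1},\dots,u_{i_r,r}\}$ is a non-edge of $\mathcal{H}$; invoking the permutation-invariance of the condition $f(\cdot)=0$, we conclude $T(i_1,\dots,i_r)=0$. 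Thus Theorem~\ref{thm:semidiag} applies and yields $\mfrank(T)\geq s/(r-1)$.

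Next I would bound $\mfrank(T)$ from above. Set $V_j=\{u_{1,j},\dots,u_{s,j}\}$, which has $s$ distinct elements because the $f_i$ are pairwise disjoint edges, and let $V=V_1\cup\cdots\cup V_r$. Consider the tensor $S:V^r\to\mathbb{F}$ defined by $S(x_1,\dots,x_r)=f(x_1,\dots,x_r)$. By Lemma~\ref{lemma:tensorrank}, $\mfrank(S)\leq\binom{n+d}{d}$. Identifying $V_j$ with $[s]$ via the bijection $i\mapsto u_{i,j}$, the tensor $T$ is the subtensor of $S$ obtained by restricting the $j$-th coordinate to $V_j$; since flattening rank is monotone under taking subtensors, $\mfrank(T)\leq\mfrank(S)\leq\binom{n+d}{d}$.

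Combining the two bounds gives $s/(r-1)\leq\binom{n+d}{d}$, i.e., $s\leq(r-1)\binom{n+d}{d}$, contradicting the hypothesis $s>(r-1)\binom{n+d}{d}$. The main obstacle is conceptual rather than computational: spotting that the right object to consider is the ``off-diagonal'' tensor $T(i_1,\dots,i_r)=f(u_{i_1,1},\dots,u_{i_r,r})$, which converts the combinatorial constraint defining $\mathcal{N}_{r,s}$ (non-edges on transversal selections, edges on the diagonal) precisely into the semi-diagonal condition needed for Theorem~\ref{thm:semidiag}. Once this setup is in place, the argument is essentially a one-line sandwich between the two flattening rank bounds.
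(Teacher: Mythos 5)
Your proof is correct and follows essentially the same approach as the paper: define the $[s]^r$ tensor $T(i_1,\dots,i_r)=f(u_{i_1,1},\dots,u_{i_r,r})$ (as a subtensor of the full tensor on $V^r$), observe that the $\mathcal{N}_{r,s}$ structure makes it semi-diagonal, and sandwich $s/(r-1)\le\mfrank(T)\le\binom{n+d}{d}$ using Theorem~\ref{thm:semidiag} and Lemma~\ref{lemma:tensorrank}. The only cosmetic difference is that you verify the semi-diagonal conditions (disjointness of the $f_i$, permutation-invariance of $f=0$) more explicitly than the paper does.
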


\begin{proof}
Let $V=V(\mathcal{H})\subset \mathbb{F}^{n}$, and let $f:(\mathbb{F}^{n})^r\rightarrow \mathbb{F}$ be the polynomial defining $\mathcal{H}$. Define tensor $T:V^{r}\rightarrow \mathbb{F}$ to be $T(\mathbf{x}_1,\dots,\mathbf{x}_r)=f(\mathbf{x}_1,\dots,\mathbf{x}_r)$. Then by Lemma \ref{lemma:tensorrank} $\mfrank(T)\leq \binom{n+d}{d}$. 

Suppose that $\mathcal{H}$ contains a member of $\mathcal{N}_{r,s}$ for some $s>(r-1)\binom{n+d}{n}$, then there exists $\mathbf{u}_{i,j}\in \mathbb{F}^{n}$ for $(i,j)\in [s]\times [r]$ such that $f(\m{u}_{i,1},\dots,\m{u}_{i,r})\neq 0$ for $i\in [s]$, and $f(\m{u}_{i_1,1},\dots,\m{u}_{i_r,r})=0$ for $(i_1,\dots,i_r)\in [s]^{r}$ for which $i_1,\dots,i_r$ are pairwise distinct. Let $T'$ be the subtensor of $T$ induced on $\{\m{u}_{1,1},\dots,\m{u}_{s,1}\}\times\dots\times \{\m{u}_{1,r},\dots,\m{u}_{s,r}\}$. Identifying $\m{u}_{i,j}$ by $i$, the tensor $T':[s]^{r}\rightarrow \mathbb{F}$ is semi-diagonal, so $$\mfrank(T)\geq \mfrank(T')\geq \frac{s}{r-1}$$ by Theorem~\ref{thm:semidiag}. This is a contradiction, finishing the proof. 
\end{proof}

Now everything is set to prove the main theorem of this section.

\begin{proof}[Proof of Theorem \ref{thm:hereditary}]
Let $\mathcal{H}\in \mathcal{F}$ and let $N=|V(\mathcal{H})|$. Then $\mathcal{H}$ is an $r$-uniform strongly-algebraic hypergraph of complexity $(n,d)$. Let $\epsilon=N^{-\beta/r!(2n+1)}$, then by Theorem \ref{thm:regularity},  there is an equitable partition of $V(\mathcal{H})$ into parts $V_1,\dots,V_{K}$ for some $8/\epsilon<K<c'(1/\epsilon)^{r!(2n+1)}$ such that all but at most $\epsilon$-fraction of the $r$-tuples of parts are either empty, or have density at least $1-\epsilon$. Here, ${c'=c'(r,n,d)>0}$.

For $i\in [K]$, pick a vertex $v_i\in V_i$ randomly with uniform distribution. Let $\mathcal{H}'$ be the hypergraph induced on the vertex set $\{v_1,\dots,v_{K}\}$, then $\mathcal{H}'\in \mathcal{F}$ as well. Define the $r$-uniform hypergraph $\mathcal{G}$ on vertex set $[K]$ as follows. The  $r$-element set $\{i_1,\dots,i_r\}\in [K]^{(r)}$ is an edge of $\mathcal{G}$ if and only if either $\{v_{i_1},\dots,v_{i_r}\}\in E(\mathcal{H})$ and $d(V_{i_1},\dots,V_{i_r})\geq 1-\epsilon$, or $\{v_{i_1},\dots,v_{i_r}\}\not\in E(\mathcal{H})$ and $(V_{i_1},\dots,V_{i_r})$ is empty. Note that if $(V_{i_1},\dots,V_{i_r})$ is $\epsilon$-homogeneous, then $\{i_1,\dots,i_r\}$ is an edge with probability at least $1-\epsilon$. Therefore, $\mathbb{E}(d(\mathcal{G}))\geq 1-2\epsilon$, so there is a choice for $v_1,\dots,v_{K}$ such that $d(\mathcal{G})\geq 1-2\epsilon$. Fix such a choice. By Lemma \ref{lemma:clique}, $\mathcal{G}$ contains a clique $J$ of size at least $\frac{1}{8}\epsilon^{-1/(r-1)}$. Let $\mathcal{H}^{*}$ be the subhypergraph of $\mathcal{H}$ induced on the vertex set $\{v_j:j\in J\}$. As $\mathcal{H}^{*}\in\mathcal{F}$, $\mathcal{H}^{*}$ contains either an independent set of size $s$, or a clique of size $c|J|^{\alpha}=c_1N^{\alpha\beta/r!(r-1)(2n+1)}$ for some $c_1=c_1(r,n,d,c,\alpha,\beta)>0$. In the latter case, we are done, so assume that $\mathcal{H}^{*}$ contains an independent set $\{v_i:i\in I\}$ of size $s$.

 We finish the proof by noting that at least one of $U_{i}$, $i\in I$ is an independent set in $\mathcal{H}$. Otherwise, if $U_{i}$ contains an edge $f_i$ for every $i\in I$, then $\bigcup_{i\in I}f_i$ spans a member of $\mathcal{N}_{r,s}$ in $\mathcal{H}$, contradicting  Lemma \ref{lemma:forbidden2}. Therefore, $\mathcal{H}$ contains an independent set of size at least $\frac{1}{2}\lfloor N/K\rfloor>c_2N^{1-\beta}$, where $c_2=c_2(r,n,d,c,\alpha,\beta)>0$.
\end{proof}

\section{Concluding remarks}

Following Fox and Pach \cite{FP08}, we say that a family of graphs $\mathcal{G}$  has the \emph{strong-Erd\H{o}s-Hajnal property}, if there exists a constant $c=c(\mathcal{G})>0$ such that for every $G\in\mathcal{G}$, either $G$ or its complement contains a bi-clique of size at least $c|V(G)|$. In \cite{APPRS}, it is proved that the strong-Erd\H{o}s-Hajnal property implies the Erd\H{o}s-Hajnal property in hereditary graph families, and that the family of semi-algebraic graphs of complexity $(n,d,m)$ has the strong-Erd\H{o}s-Hajnal property. On the other hand, it is known that this is not the case for algebraic graphs. For infinitely many $N$, one describe a strongly-algebraic graph $G$ of complexity $(3,2)$ such that the size of the largest bi-clique in both $G$ and its complement is $O(N^{3/4})$.

If $q$ is a prime power, the Erd\H{o}s-R\'enyi graph $ER_{q}$, is defined as follows. The vertices of $ER_{q}$ are the elements of the projective plane over $\mathbb{F}_{q}$, and $(x_0,x_1,x_2)$ and $(y_0,y_1,y_2)$ are joined by an edge if $x_0y_0+x_1y_1+x_2y_2=0$. The graph $ER_{q}$ has $N=q^{2}+q+1$ vertices, is $(q+1)$-regular and has at most $2(q+1)$ vertices with loops. This graph contains no copy of $K_{2,2}$ and has eigenvalues $\pm\sqrt{q}$ and $q+1$, where the multiplicity of $q+1$ is $1$, see e.g. \cite{KS}. By the expander mixing lemma (see, e.g., Theorem 2.11 in \cite{KS}), one can bound the number of the edges/non-edges of $ER_{q}$ between two disjoint subsets of vertices using its eigenvalues. This lemma implies that both $ER_{q}$ and its complement contains no bi-clique of size larger than $$\frac{N\sqrt{q}}{q+1}=O(N^{3/4}).$$ Note that the complement of $ER_{q}$ is strongly-algebraic of complexity $(3,2)$. Indeed, we can view the vertices of $ER_{q}$ as elements of $\mathbb{F}_{q}^{3}$, by replacing each $(x_0,x_1,x_2)\in \mathbb{PF}_{q}^{2}$ with one element of the equivalence class $C_{(x_{0},x_{1},x_{2})}=\{(\lambda x_{0},\lambda x_{1},\lambda x_{2}):\lambda\in \mathbb{F}_{q}\setminus\{0\}\}\subset \mathbb{F}_{q}^3$. For a variant of this construction, see e.g. \cite{CS18+}, Section 6.1.

Finally, let us mention that although algebraic graphs of bounded complexity do not have strong Ramsey properties (as we proved in this paper), they are one of the main sources of best examples for so called Tur\'an-type questions. Given an $r$-uniform hypergraph $\mathcal{H}$, the \emph{extremal number (or Tur\'an number) of $\mathcal{H}$}, denoted by $\mbox{ex}(N,\mathcal{H})$ is the maximum number of edges in an $r$-uniform hypergraph on $N$ vertices which contains no copy of $\mathcal{H}$ as a subhypergraph. If $H$ is a graph, the asymptotic value of the extremal number of $H$ is known by the Erd\H{o}s-Stone theorem,  unless $H$ is bipartite. The case of bipartite graphs is notoriously hard. In many cases when the order of $\mbox{ex}(N,H)$ is known for some bipartite graph $H$, the construction achieving the right order of magnitude is an algebraic graph of bounded complexity, see e.g. \cite{B66,B15,BC18,ERS66}. Algebraic hypergraphs of bounded complexity also appear in connection to Tur\'an type results, see e.g. \cite{MYZ18, CPZ20}.

\vspace{0.3cm}
\noindent	
{\bf Acknowledgements.} We would like to thank Jacob Fox, J\'anos Pach, and Artem Chernikov for pointing out the related references and results. Both authors were supported by the SNSF grant 200021\_196965. Istv\'an Tomon also acknowledges the support of Russian Government in the framework of MegaGrant no 075-15-2019-1926, and the support of MIPT Moscow.

\end{document}